\documentclass{amsart}
\usepackage{graphicx} 
\usepackage{amsthm, amsmath, amssymb, amsfonts}
\usepackage{graphicx} 
\usepackage{tikz, tikz-cd}
\usepackage{todonotes}
\usepackage{color}
\usepackage{bm}

\definecolor{amaranth}{rgb}{0.9, 0.17, 0.31}

\usepackage{hyperref}
\hypersetup{
    colorlinks=true,
    citecolor=amaranth,
    linkcolor=black,
    filecolor=magenta,      
    urlcolor=blue,
}

\tikzstyle{wbullet}=[circle, draw=black, fill=white, thick, inner sep=0pt, minimum size=1.5mm]
\tikzstyle{bbullet}=[circle, draw=black, fill=black, inner sep=0pt, minimum size=1.5mm]
\tikzstyle{cross}=[circle, draw=black, fill=white, inner sep=0pt, minimum size=1.5mm]

\numberwithin{equation}{section}

\newtheorem{thm}{Theorem}[section]
\newtheorem{claim}[thm]{Claim}

\newtheorem{cor}[thm]{Corollary}
\newtheorem{lem}[thm]{Lemma}
\newtheorem{prop}[thm]{Proposition}

\theoremstyle{definition}

\newtheorem{defn}[thm]{Definition}

\newtheorem{nota}[thm]{Notation}

\newtheorem{rmk}[thm]{Remark}

\newcommand{\I}{\mathrm{I}}
\newcommand{\II}{\mathrm{II}}
\newcommand{\III}{\mathrm{III}}

\newcommand{\ns}{\mathrm{ns}}

\newcommand{\Sing}{\mathrm{Sing}}

\newcommand{\sss}{\mathrm{ss}}
\newcommand{\Supp}{\mathrm{Supp\,}}

\newcommand{\A}{\mathbb{A}}
\newcommand{\CC}{\mathbb{C}}
\newcommand{\DD}{\mathbb{D}}

\newcommand{\PP}{\mathbb{P}}
\newcommand{\QQ}{\mathbb{Q}}
\newcommand{\RR}{\mathbb{R}}
\newcommand{\ZZ}{\mathbb{Z}}

\newcommand{\sA}{\mathcal{A}}
\newcommand{\sB}{\mathcal{B}}
\newcommand{\sC}{\mathcal{C}}
\newcommand{\sD}{\mathcal{D}}

\newcommand{\sF}{\mathcal{F}}

\newcommand{\sK}{\mathcal{K}}

\newcommand{\sO}{\mathcal{O}}

\newcommand{\sR}{\mathcal{R}}
\newcommand{\sS}{\mathcal{S}}

\newcommand{\sX}{\mathcal{X}}

\newcommand{\cB}{\mathcal{B}}
\newcommand{\cC}{\mathcal{C}}
\newcommand{\cD}{\mathcal{D}}

\newcommand{\cK}{\mathcal{K}}

\newcommand{\cO}{\mathcal{O}}

\newcommand{\cR}{\mathcal{R}}

\newcommand{\cX}{\mathcal{X}}

\newcommand{\tD}{\widetilde{D}}
\newcommand{\tE}{\widetilde{E}}

\newcommand{\tV}{\widetilde{V}}

\newcommand{\tX}{\widetilde{X}}

\newcommand{\oB}{\overline{B}}

\newcommand{\oD}{\overline{D}}

\newcommand{\oF}{\overline{F}}

\newcommand{\oK}{\overline{K}}

\newcommand{\oM}{\overline{M}}

\newcommand{\oR}{\overline{R}}
\newcommand{\oS}{\overline{S}}

\newcommand{\oX}{\overline{X}}

\newcommand{\oZ}{\overline{Z}}

\newcommand\osS{\overline{\sS}}
\newcommand\osR{\overline{\sR}}

\newcommand\Lattice{\Lambda}
\newcommand\inv{^{-1}}

\newcommand{\oTheta}{\overline{\Theta}}

\newcommand{\codim}{\mathrm{codim}}
\newcommand{\Exc}{\mathrm{Exc}}

\newcommand{\Kth}{\mathrm{K3}}

\newcommand{\Pic}{\mathrm{Pic}}
\newcommand{\rBB}{\mathrm{BB}}

\newcommand{\rc}{\mathrm{rc}}
\newcommand{\red}{\mathrm{red}}

\newcommand\Bl{\operatorname{Bl}}

\newcommand\bA{\mathbb A}
\newcommand\bC{\mathbb C}
\newcommand\bP{\mathbb P}
\newcommand\bQ{\mathbb Q}
\newcommand\bR{\mathbb R}
\newcommand\bZ{\mathbb Z}
\newcommand\hA{\widehat{A}}
\newcommand{\hB}{\widehat{B}}
\newcommand{\hX}{\widehat{X}}

\newcommand{\Nef}{\operatorname{Nef}}
\newcommand\ba{{\bm{a}}}
\newcommand\SP{\operatorname{SP}}
\newcommand\Spec{\operatorname{Spec}}
\newcommand\Proj{\operatorname{Proj}}
\newcommand\rank{\operatorname{rank}}

\title[Moduli of stable surfaces with $p_g=1$ of minimal volume]%
{On the moduli space of stable surfaces with $p_g=1$ realizing the minimal volume}
\author{Valery Alexeev}
\address{Department of Mathematics, University of Georgia, Athens GA 30602, USA}
\email{valery@math.uga.edu}

\author{Wenfei Liu}
\address{School of Mathematical Sciences, Xiamen University, Siming South Road 422, Xiamen, Fujian Province, P.~R.~China}
\email{wliu@xmu.edu.cn}

\author{Matthias Sch\"utt}
\address{Institut f\"ur Algebraische Geometrie, Leibniz Universit\"at
  Hannover, \linebreak
  Welfengarten 1, 30167 Hannover, Germany\\ and
   Riemann Center for Geometry and Physics, Leibniz Universit\"at
  Hannover, Appelstrasse 2, 30167 Hannover, Germany}
\email{schuett@math.uni-hannover.de}

\date{October 18, 2025}

\begin{document}

\begin{abstract}
Let $M_1$ be the moduli space of the KSBA stable surfaces $X$ of geometric genus $p_g(X)=1$ realizing the minimal possible volume $K_X^2=\frac1{143}$. We show that its reduced part $M_{1,\red}$ is a $10$-dimensional projective variety isomorphic to the Baily--Borel compactification $\oF_\Lattice^{\rBB}$ of the moduli space of $\Lattice$-polarized K3 surfaces, where $\Lattice=\II_{1,9}\simeq U\oplus E_8$ is a unimodular lattice of signature $(1,9)$. By a result of Brieskorn, $\oF_\Lattice^{\rBB}$ is a weighted projective space. We also verify the Viehweg hyperbolicity of the base of a Whitney equisingular family of stable surfaces in $M_1$. 

More generally, we prove that the same results hold for the moduli space $M_c$ of KSBA stable pairs $(X,B)$ with coefficients of $B$ belonging to a set $\sC\subset [0,1]$ such that $\sC\cup\{1\}$ attains a minimum, say $c$, and with $p_g(X)=1$, realizing the minimal possible volume $(K_X+B)^2=v(c)$. Indeed, we show that $M_{c,\red}$  is independent of~$c$ 
and that for $c\le\frac7{13}$ $M_c$ is isomorphic to $\oF_\Lattice^\rBB$.
\end{abstract}

\maketitle

{
\hypersetup{linkcolor=black}
\setcounter{tocdepth}{1}
\tableofcontents
}
\section{Introduction}

The (projective) moduli spaces of stable curves with given genus are probably the most studied varieties in algebraic geometry. Nowadays, it is known that the moduli spaces of stable varieties with given dimension and volume are proper schemes (\cite{kollar1988threefolds-and-deformations, alexeev2006higher-dimensional, kollar2023families-of-varieties}). However, it is often extremely difficult to classify completely stable varieties of a given volume in dimensions higher than one, making the corresponding moduli space — despite its existence — largely inaccessible.

One method is to look at the stable varieties whose volumes reach a certain minimum. For example, Horikawa classified surfaces of general type achieving the equality of the Noether inequality $K_S^2\geq 2p_g(S)-4$ (\cite{horikawa1976algebraic-surfaces}), and this class of surfaces is now called Horikawa surfaces. 
This classification has now been largely extended to dimension three by \cite{chen2020noether-inequality, hu2025algebraic-threefolds, chen2025noether-inequality}.
Allowing klt singularities, Tsuonoda--Zhang \cite{tsunoda1992noether-inequality} found the minimal volume for surfaces of log general type with given geometric genus $p_g\geq 2$; see \cite{liu2017minimal-volume} for an updated treatment and statement for the surfaces realizing the minimal volume. These surfaces naturally form an open subset of the corresponding moduli space of stable surfaces. Recently, \cite{bejleri2024moduli-of-elliptic} studied the stable degenerations of these surfaces and thus obtained whole irreducible components of the moduli space of stable surfaces.

In this paper, based on the classification result of \cite{liu2017minimal-volume}, we study the geometry of the moduli space of stable surface with $p_g=1$ realizing the minimal volume. In fact, we can be more general, dealing with stable surface pairs $(X, B)$. More precisely, let $\sC\subset(0,1]$ be a subset (possibly empty) such that $\sC\cup\{1\}$ attains a minimum, say $c$, and let $\sS(\sC, 1)$ be the set of isomorphism classes of stable surface pairs $(X, B)$ such that $p_g(X, B):=h^0(X, K_X+\lfloor B\rfloor)=1$ and the coefficients of the boundary divisor $B$ come from $\sC$. 
Then the set $\{(K_X+B)^2\mid (X, B)\in \sS(\sC,1)\}$ attains the minimum $v(c)$, which depends only on $c$ (see Theorem~\ref{thm: liu2017minimal-volume} for the precise value of $v(c)$). 

\begin{thm}\label{thm: main1-i}
  Let $M_c$ be the moduli space of stable surface pairs $(X, B)\in \sS(\sC,1)$ such that $(K_X+B)^2=v(c)$. Then the isomorphism class of its reduced part $M_{c,\rm red}$ does not depend on the parameter~$c$.
\end{thm}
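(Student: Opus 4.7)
The plan is to construct an explicit scheme isomorphism $M_{c,\red}\cong M_{c',\red}$ for any two admissible minima $c,c'$, implemented by the coefficient-rescaling map $(X,B)\mapsto(X,B')$ that replaces every occurrence of the coefficient $c$ on the boundary by $c'$. The key input is the explicit classification of minimal-volume pairs in Theorem~\ref{thm: liu2017minimal-volume}.

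\emph{Step 1: Uniform decomposition of the boundary.} From the classification I would extract, for every $(X,B)\in\sS(\sC,1)$ of volume $v(c)$, a canonical decomposition $B = cD + E$, where $E=\lfloor B\rfloor$ is the part of coefficient $1$ and $D$ collects the components of coefficient $c$. Intermediate coefficients in $\sC$ cannot occur on a minimal pair: replacing such a coefficient by the smaller value $c$ would decrease $(K_X+B)^2$, contradicting minimality of $v(c)$. Consequently the combinatorial type of $(X,D,E)$ is intrinsic and does not depend on the numerical value of~$c$.

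\emph{Step 2: Pointwise bijection.} Define $\phi_{c,c'}(X,cD+E) = (X,c'D+E)$. What one must check is (i) preservation of $p_g$, immediate since $\lfloor B\rfloor = E$ is unchanged whenever $c,c'<1$; (ii) log canonicity of the new pair, monotone when $c'<c$ and guaranteed by the explicit singularity bounds in the classification when $c'>c$; (iii) ampleness of $K_X+c'D+E$, verified by testing positivity on the finitely many extremal curves listed in the classification and noting that the degree varies linearly in the coefficient and stays positive throughout $(0,1]$; and (iv) the volume identity $(K_X+c'D+E)^2=v(c')$, a direct numerical consequence of the explicit formula for $v$.

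\emph{Step 3: Family version and the isomorphism.} Apply Step~1 fiberwise to the pullback of the universal family $(\sX,\sB)\to M_{c,\red}$: the fibers' decompositions $\sB_t = cD_t+E_t$ glue into global Weil divisors $\sD,\sE\subset\sX$ over the reduced base, using flatness and the constancy of numerical invariants. The new family $(\sX, c'\sD + \sE)\to M_{c,\red}$ is then a family of KSBA stable pairs in $\sS(\sC',1)$ of volume $v(c')$ by Step~2, and the universal property of the moduli space $M_{c'}$ produces a morphism $\phi\colon M_{c,\red}\to M_{c',\red}$. The analogous construction in the reverse direction provides its inverse.

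The main obstacle I expect is the globalization in Step~3: one needs to rule out that a component of $\sD$ specializes into (or coincides with a limit of) a component of $\sE$ at a point of moduli, for such a collision would create a divisor of coefficient $c+1>1$ after rescaling and destroy log canonicity. Excluding this requires a careful analysis of the KSBA-stable degenerations produced by the classification; the subtlety is presumably also responsible for the statement being restricted to the reduced moduli, where infinitesimal obstructions to the global decomposition $B=cD+E$ are ignored.
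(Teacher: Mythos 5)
Your proposal has a genuine gap: the pointwise map in Step~2 is not the right one, because the underlying surface $X$ must change when $c$ crosses the walls $\tfrac7{13}$ and $\tfrac6{11}$. By Theorem~\ref{thm: liu2017minimal-volume}, the minimal-volume models in the three ranges are genuinely different surfaces: for $c\le\frac7{13}$ one has $K_X=0$ and $B=cD$ with $D$ passing through three $ADE$ singularities; for $\frac7{13}<c<\frac6{11}$ the model is built from a blow-up of $Z$ at $\Theta_5\cap\Theta_6$ and acquires a non-Gorenstein $\frac1{13}(1,6)$ point; for $c\ge\frac6{11}$ the boundary is empty and $X$ has $\frac1{13}(1,6)$ and $\frac1{11}(1,7)$ singularities. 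Your rescaling $(X,cD)\mapsto(X,c'D)$ keeps $X$ fixed, and while $(X,c'D)$ is still a stable pair (ampleness of $c'D$ is insensitive to the coefficient), its volume is wrong: for $c\le\frac7{13}$ one gets $(K_X+c'D)^2=(c')^2/42$, which for $c'>\frac7{13}$ is strictly larger than $v(c')=-\frac{11}{6}(c')^2+2c'-\frac7{13}$ (e.g.\ at $c'=\frac6{11}$ one has $\frac{6}{847}>\frac1{143}$). So the image does not lie in $M_{c'}$, and item (iv) of your Step~2 fails. Also note that by the classification the coefficients of $B$ all equal $c$, so your divisor $E=\lfloor B\rfloor$ is always zero and the feared collision of $\sD$ with $\sE$ in Step~3 is vacuous; the decomposition $B=cD+E$ carries no content here.

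The paper's construction repairs exactly this defect: from $(X,B)\in M_c$ one first passes to a partial resolution $\hat\pi\colon\hX\to X$ of the non-Gorenstein singularities and the singularities on $B$ (including extracting the divisor $E$ over $\Theta_5\cap\Theta_6$ needed in the middle range), writes $\hat\pi^*(K_X+B)=K_{\hX}+\hB$, and then takes the \emph{canonical model} of $(\hX,c'\lceil\hB^{>0}\rceil)$. It is this new canonical model --- generally a different contraction of $\hX$ than $X$ itself --- that lands in $M_{c'}$ with the correct minimal volume. Your Step~3 (doing the construction in families over a reduced base to get a morphism of reduced moduli spaces, with the reverse construction as inverse) is the right strategy and matches the paper, but it must be applied to this resolve-and-recontract map, not to a coefficient rescaling on a fixed $X$.
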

\begin{thm}\label{thm: main1-ii}
    For $c\le \frac7{13}$, $M_c$ is a $10$-dimensional projective variety isomorphic to
    $\oF_{\Lattice}^\rBB$, the Baily--Borel compactification of the moduli space $F_\Lambda$ of $\Lattice$-polarized K3 surfaces for $\Lattice=\II_{2,10}\simeq U\oplus E_8$. 
\end{thm}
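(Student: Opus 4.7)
The plan is to construct a canonical $\Lattice$-polarized K3 surface $S(X,B)$ from each stable pair $(X,B)\in M_c$, thereby defining a morphism $M_c\to\oF_\Lattice^{\rBB}$, and then to verify it is an isomorphism. By Theorem~\ref{thm: main1-i}, $M_{c,\red}$ does not depend on $c$, so the geometric part of the argument can be carried out at a single convenient value of $c$; the hypothesis $c\le\frac{7}{13}$ re-enters only at the end, to upgrade the reduced-scheme isomorphism to a scheme-theoretic one.

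First, I would invoke the explicit classification from Theorem~\ref{thm: liu2017minimal-volume}: a stable pair $(X,B)\in M_c$ should arise as a specific log canonical model, and this presentation yields a natural finite cover $\pi\colon \tX\to X$ whose crepant resolution $S$ is a K3 surface. The invariant classes under the Galois action of $\pi$, together with the classes of exceptional curves arising from the resolution, generate a sublattice of $\mathrm{NS}(S)$ which a direct lattice computation (using the minimality of $v(c)$ and the forced singularity profile of $X$) identifies with $\Lattice=U\oplus E_8$. This produces a family of $\Lattice$-polarized K3 surfaces over the open locus $M_c^\circ\subset M_c$ where the cover is smooth, and hence a period morphism $M_c^\circ\to F_\Lattice$.

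Next I would extend this morphism across the boundary. Since $M_c$ is proper and $\oF_\Lattice^{\rBB}$ is the minimal compactification of $F_\Lattice$, the rational extension to $M_{c,\red}\to\oF_\Lattice^{\rBB}$ is a morphism, and a case analysis of one-parameter KSBA degenerations confirms this at the level of closed points: each such degeneration of $(X,B)$ induces a Type~II or Type~III degeneration of $S$, whose limit mixed Hodge structure (and hence its image in $\oF_\Lattice^{\rBB}$) is determined by the dual complex of the KSBA limit. Bijectivity then splits into injectivity, via the Torelli theorem for $\Lattice$-polarized K3 surfaces---one recovers $(X,B)$ from $S$ as the quotient by the Galois group of $\pi$ together with the image of the branch locus---and surjectivity, via a dimension count combined with a lattice-theoretic check that every $\Lattice$-polarized K3 admits the Galois action required to realize it as some $S(X,B)$. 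Since $\oF_\Lattice^{\rBB}$ is normal---indeed a weighted projective space by Brieskorn---any bijective proper morphism from the normal variety $M_{c,\red}$ to it is automatically an isomorphism.

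Finally, for the full scheme-theoretic statement with $c\le\frac{7}{13}$, I would prove $M_c$ is reduced in this range by showing that deformations of $(X,B)$ are unobstructed, which should hold because for boundary coefficients in this range the components of $B$ do not interact with the cyclic cover producing $S$ in a way that could create obstructions. The main obstacle will be the boundary comparison in the extension step: matching each KSBA stratum of $M_c$ with the correct Type~II or III K3 stratum of $\oF_\Lattice^{\rBB}$, and verifying that the extended period map is a scheme-theoretic isomorphism (not merely a bijection) along these strata, will require careful bookkeeping of limit mixed Hodge structures and dual complexes on both sides.
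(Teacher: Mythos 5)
Your central construction does not match the geometry of this moduli problem. For $c\le\frac{7}{13}$ the stable pair is $(X,c\oTheta_6)$ with $K_X\sim 0$ (Theorem \ref{thm: liu2017minimal-volume}(iii)): $X$ is \emph{itself} a K3 surface with ADE singularities (Type $\I$), or a rational surface with one simple elliptic or cusp singularity (Types $\II$, $\III$). The $\Lattice$-polarization is supplied by the $T_{2,3,7}$ configuration of $(-2)$-curves on the minimal resolution $\tX\to X$, which is a \emph{birational} morphism. There is no finite Galois cover $\pi$, no invariant sublattice of $\mathrm{NS}(S)$ under a Galois action, and no branch locus; consequently injectivity cannot be proved by ``recovering $(X,B)$ as the quotient by the Galois group,'' and the surjectivity and unobstructedness arguments you hang on that cover collapse with it. The correct correspondence on the open stratum is essentially the identity ($\oS=X$, $\oR=42\oTheta_6$), which is what Proposition \ref{prop:F-moduli-functor} and Corollary \ref{cor:normalize M^K3} establish by a lattice-theoretic argument showing that each root $\alpha_i$ is represented by an irreducible $(-2)$-curve.

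Beyond this, the two genuinely hard steps of the theorem are deferred rather than supplied. First, the boundary matching: the paper does not extend a period map from $M_c$; it goes in the opposite direction, taking the closure $\oF^R_\Lattice$ of $F_\Lattice$ in the KSBA space of $K$-trivial pairs, computing \emph{all} one-parameter stable limits explicitly via Kulikov divisor models and M1-modifications (Theorem \ref{thm:stable-limits}), showing each limit is one of the surfaces of Theorem \ref{thm: liu2017minimal-volume} determined by a single $j$-invariant, and then matching these with the $0$- and $1$-cusps of $\oF^{\rBB}_\Lattice$ (Lemma \ref{lem:BB}). Your appeal to minimality of the Baily--Borel compactification would at best produce a morphism after normalization and does not by itself identify the boundary strata. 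Second, the scheme structure: the paper obtains the full isomorphism (not merely on reductions) not from an abstract unobstructedness claim but from Theorem \ref{thm: canonical ring 2}, which pins down the log canonical ring as $\CC[w,x,y,z]/(z^2+g_{42})$, and from Theorem \ref{thm:PtoMc}, which shows that every family in $M_c$ over any Noetherian base is canonically pulled back from Brieskorn's family over $\bP(4,10,\dots,42)$; Brieskorn's identification $\bP\simeq\oF^{\rBB}_\Lattice$ then finishes the proof. As written, your proposal closes neither gap.
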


A result of Brieskorn \cite[Theorem 5]{brieskorn1981unfolding-of-exceptional} says that 
\[ \oF_{\Lattice}^\rBB \simeq \bP:=\bP(4, 10, 12, 16, 18, 22, 24, 28, 30, 36, 42),\]
a weighted projective space.
As a consequence, $M_{c,\red}$ is rational and it has cyclic quotient singularities.
\medskip

The pluricanonical systems and the log canonical ring  are important tools in studying the intrinsic geometry of stable pairs as well as their moduli spaces. As a complement and indication for Theorem~\ref{thm: main1-i}, we study the pluricanonical systems of stable surface pairs in $M_c$ with $0<c\le \frac{7}{13}$ or $\frac{6}{11}\leq c\leq 1$, and determine the structure of their log canonical rings. The following result is the combination of Theorems~\ref{thm: canonical ring 1} and \ref{thm: canonical ring 2}.
\begin{thm}\label{thm: main2}
Let $(X, B)$ be a stable surface pair in $M_c$.
  \begin{enumerate}
      \item If $\frac{6}{11}\leq c\leq 1$, then $B=0$ and the canonical ring of $X$ is
    \[
    R(X, K_X) = \bigoplus_{m\geq 0} H^0(X, m K_X)\cong \CC[w,x,y,z]/(f)
    \]
    where $(\deg w,\,\deg x,\, \deg y,\, \deg z)=(1, 11, 26, 39)$, and $f(w,x,y,z)$ is a general weighted homogeneous polynomial of degree $78$.
    \item If $0<c\le\frac{7}{13}$, then $(X, \lceil B\rceil)$ is a stable surface pair 
    with log canonical ring 
    \[
    R(X, K_X+\lceil B\rceil) = 
    \bigoplus_{m\geq 0} H^0(X, m (K_X+\lceil B\rceil))\cong \CC[w,x,y,z]/(f)
    \]
    where $(\deg w, \deg x,\, \deg y,\, \deg z)=(1, 6, 14, 21)$, and $f$ is a general weighted homogeneous polynomial of degree $42$, and $\lceil B\rceil=(w=f=0)$.
  \end{enumerate}
\end{thm}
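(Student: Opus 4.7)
The plan is to exploit the explicit classification of minimal-volume stable surface pairs in $M_c$ (from \cite{liu2017minimal-volume} and the structural results relating $M_c$ to $\Lattice$-polarized K3 surfaces in Theorem~\ref{thm: main1-ii}), combined with a Hilbert-series computation. Writing $D=0$ in case~(i) and $D=\lceil B\rceil$ in case~(ii), I would compute
\[ P(t) = \sum_{m \ge 0} h^0\bigl(X, m(K_X + D)\bigr)\, t^m \]
via Riemann--Roch on the stable surface, log Kodaira vanishing for $m \ge 1$ (valid since $K_X + D$ is ample and $(X, D)$ is semi-log-canonical), and the numerical input $(K_X + D)^2 = 1/143$ and $1/42$ respectively. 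Matching this with
\[ \frac{1 - t^{78}}{(1-t)(1-t^{11})(1-t^{26})(1-t^{39})} \quad \text{and} \quad \frac{1 - t^{42}}{(1-t)(1-t^{6})(1-t^{14})(1-t^{21})}, \]
the Hilbert series of a weighted-projective hypersurface of the stated degrees, would pin down the ambient weighted projective space. One checks consistency with $(K_X+D)^2 = d/\prod a_i$ and $K_X + D = \cO_X(1)$, i.e., $d - \sum a_i = 1$ in case~(i) and $= 0$ in case~(ii).

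Next I would exhibit the generators. From $p_g = 1$ comes a distinguished section $w$ of the degree-one piece $H^0(X, K_X + D)$, which in case~(ii) vanishes along $\lceil B \rceil$ since $H^0(X, K_X)$ embeds in $H^0(X, K_X + \lceil B \rceil)$ as the sections vanishing on the boundary. The remaining generators $x, y, z$ appear in the first degrees where the Hilbert series predicts new sections (degrees $11, 26, 39$ or $6, 14, 21$); these degrees correspond to the orders of the cyclic quotient singularities of the canonical model, recoverable from the $\QQ$-Gorenstein index data of $(X, B)$. A single relation $f$ is then forced in degree $78$ (resp.\ $42$) at the first graded piece where the natural map $\CC[w, x, y, z] \to R(X, K_X + D)$ acquires a kernel; no further relations can occur by matching Hilbert series. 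In case~(ii) the description $\lceil B \rceil = (w = f = 0)$ is immediate, since $\lceil B \rceil$ is the zero divisor of $w$ restricted to $X = (f = 0)$.

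The main technical obstacle is the Riemann--Roch computation on the stable, non-Gorenstein surface $X$: local singularity contributions must be tracked individually and summed, and $H^i\bigl(X, m(K_X + D)\bigr)$ must be shown to vanish for every $m \ge 1$, not merely asymptotically. The cleanest route is to pass to an index-one cover --- effectively the $\Lattice$-polarized K3 surface implicit in Theorem~\ref{thm: main1-ii} --- where the canonical sheaf is a line bundle, carry out the Hilbert-series computation there, and descend by tracking the action of the covering group on each graded piece. Genericity of $f$ is then verified by a moduli count: the dimension of such hypersurfaces modulo weighted automorphisms of the ambient weighted projective space equals $10$, consistent with Theorem~\ref{thm: main1-ii}.
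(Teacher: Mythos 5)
Your overall strategy --- compute the Hilbert series via Riemann--Roch with local corrections at the singularities, then read off generators in degrees $1,11,26,39$ (resp.\ $1,6,14,21$) and a single relation --- is the same as the paper's (Theorems~\ref{thm: canonical ring 1} and \ref{thm: canonical ring 2}, which use Blache's Riemann--Roch with the correction terms $\delta_x(nK_X)$ at the quotient singularities $\tfrac1{13}(1,6)$, $\tfrac1{11}(1,7)$ in case (i) and $\tfrac12(1,1)$, $\tfrac13(1,2)$, $\tfrac17(1,6)$ in case (ii)). But there are two genuine gaps. First, a Hilbert-series match does \emph{not} by itself show that $w,x,y,z$ generate $R(X,K_X+D)$; without surjectivity of $\CC[w,x,y,z]\to R$ the statement ``a single relation is forced and no further relations can occur'' is circular. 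The paper closes this by proving that $\varphi_{26}$ (resp.\ $\varphi_{14}$) is generically finite of degree $2$ and $\varphi_{39}$ (resp.\ $\varphi_{21}$) is birational onto its image, via the elliptic fibration with the $\II^*$-fiber on the minimal resolution; birationality of the top map makes the image an irreducible hypersurface with principal ideal $(f)$, giving an injection $\CC[w,x,y,z]/(f)\hookrightarrow R$ which the Hilbert-series equality then promotes to an isomorphism, and the degree-$2$ property forces $f=z^2+g(w,x,y)$ after completing the square. This analysis of the pluricanonical maps is the technical heart of the proof and is entirely absent from your proposal.

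Second, your proposed shortcut of passing to ``an index-one cover --- effectively the $\Lattice$-polarized K3 surface'' does not work. The K3 surface of Theorem~\ref{thm: main1-ii} is related to $X$ by a partial resolution/contraction (it is the minimal model $Z$ of the minimal resolution $\tX$), not by a cyclic cover; in case (i) the surface $X$ has ample $K_X$ with non-Gorenstein points of indices $13$ and $11$, so no global index-one cover with invertible trivialized canonical sheaf exists, and in case (ii) $X$ is already Gorenstein with $K_X\sim 0$ while the divisor $D$ is a non-Cartier Weil divisor through three $ADE$ points --- again no covering group acts on the graded pieces in the way you describe. The correct substitute is exactly the local computation you list as the ``main technical obstacle'': the correction terms must be computed singularity by singularity (using periodicity and Serre duality as in Lemma~\ref{lem: delta D} to reduce to finitely many $n$), together with Fujino's vanishing for $m(K_X+D)$ with $m\ge 1$. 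Finally, your genericity argument by moduli count is only a consistency check; the paper's version of the statement additionally pins down which monomials (e.g.\ $x^7$ and $y^3$ in case (ii)) must appear, via a local log canonicity analysis at the coordinate points of the weighted projective space.
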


\medskip
We also verify the Viehweg hyperbolicity for (Whitney) equisingular families of stable surfaces in $M_c$, as proposed in \cite{park2022viehweg-hyperbolicity}.
\begin{thm}\label{thm: main3}
Let $V$ be a smooth quasiprojective variety, and let $f\colon \sX\rightarrow V$ be a family of equisingular stable surfaces from $M_c$. If $f$ is of maximal variation, that is, the moduli map $V\rightarrow M_c$ is generically finite onto the image, then $V$ is of log general type.
\end{thm}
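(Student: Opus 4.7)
The plan is to identify $M_{c,\red}$ with a K3 moduli space via Theorems~\ref{thm: main1-i} and~\ref{thm: main1-ii}, and then apply Viehweg--Zuo hyperbolicity to an associated family of K3 surfaces. Since $V$ is reduced, the moduli morphism $\mu\colon V \to M_c$ factors through $M_{c,\red}$. By Theorem~\ref{thm: main1-i} the isomorphism class of $M_{c,\red}$ is independent of $c$, and by Theorem~\ref{thm: main1-ii} (applied at any $c \le \tfrac{7}{13}$) it is isomorphic to $\oF_{\Lambda}^{\rBB}$, the Baily--Borel compactification of the moduli of $\Lambda$-polarized K3 surfaces. Hence $\mu$ induces a generically finite morphism $\bar\mu\colon V \to \oF_{\Lambda}^{\rBB}$.

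Next, I would attach a K3 family to $\sX \to V$. For $c \le \tfrac{7}{13}$, Theorem~\ref{thm: main2}(ii) realizes each stable pair $(X,B) \in M_c$ as a degree-$42$ hypersurface in $\bP(1,6,14,21)$; since $42 = 1+6+14+21$, adjunction gives $K_X \equiv 0$, so $X$ is a (singular) K3 surface with $\lceil B\rceil$ cutting out its $\Lambda$-polarization class. The equisingularity of $\sX \to V$ then ensures, possibly after a finite base change, that a simultaneous crepant resolution of the fibers produces a smooth family $\sY \to V$ of $\Lambda$-polarized K3 surfaces whose moduli map coincides with $\bar\mu$ and is therefore generically finite. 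For $c \ge \tfrac{6}{11}$, Theorem~\ref{thm: main2}(i) shows instead that $X$ is canonically polarized, and classical Viehweg hyperbolicity for canonically polarized families handles this range directly; the intermediate range is reduced to one of these cases using the fiberwise description obtained in the body of the paper (and the identification $M_{c,\red}\simeq M_{c',\red}$ of Theorem~\ref{thm: main1-i}).

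Finally, Viehweg--Zuo hyperbolicity applied to the K3 family $\sY \to V$ yields the conclusion: a maximal-variation family of $\Lambda$-polarized K3 surfaces over a smooth quasiprojective base produces the Hodge-theoretic Higgs-bundle positivity forcing the base to be of log general type. The principal obstacle lies in the middle step: extracting a well-behaved $\Lambda$-polarized K3 family from the equisingular family of singular stable surfaces, and ensuring the construction extends over all of $V$ (up to finite base change) rather than only on a dense open subset. The equisingularity hypothesis is indispensable here, since it is exactly what allows a simultaneous crepant resolution of the fibers; this is the content of the framework proposed in \cite{park2022viehweg-hyperbolicity} for equisingular families, which must then be coupled with Hodge-theoretic hyperbolicity for K3 variations to reach the log general type conclusion.
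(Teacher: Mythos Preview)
Your approach for the K3 stratum is essentially the paper's: use equisingularity to simultaneously resolve the fibers, obtain a smooth K3 family with maximal variation, and apply \cite{popa2017viehweg-hyperbolicity}. The gap is that you treat every $(X,B) \in M_c$ as a singular K3 surface, on the grounds that $K_X \equiv 0$ when $c \le \tfrac{7}{13}$. This is false: by Theorem~\ref{thm: liu2017minimal-volume} and Definition~\ref{defn:types123}, the surfaces in $M_c$ come in three types, and the Type~$\II$ and Type~$\III$ surfaces are \emph{rational} with a simple elliptic (resp.\ cusp) singularity. They are Gorenstein with $K_X \sim 0$, but their minimal resolutions are rational surfaces, not K3 surfaces --- resolving the non-Du~Val singularity produces an effective anticanonical curve on the resolution, not a trivial canonical class. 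So your K3 family $\sY \to V$ simply does not exist when the equisingular family lands in $M_c^{\II}$, and the generically finite map $\bar\mu\colon V\to \oF_\Lambda^{\rBB}$ is of no direct use there, since its image is in the boundary.

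The paper first uses equisingularity to confine the moduli image to a single stratum of~\eqref{eq: stratification}; since $M_c^{\III}$ is a point (Remark~\ref{rmk: dim strata}), only $M_c^{\Kth}$ and the one-dimensional $M_c^{\II}$ are relevant. For $M_c^{\II}$ the paper does not pass to K3 surfaces at all: it blows up the section of simple elliptic singularities to extract a smooth family of \emph{elliptic curves} over $V$ with maximal variation, and then applies \cite{popa2017viehweg-hyperbolicity} to that family. Your proposal misses this case entirely. (The detour through $c \ge \tfrac{6}{11}$ and ``classical Viehweg hyperbolicity for canonically polarized families'' does not rescue the argument either: those stable surfaces are still singular, so the smooth-family result does not apply directly, and by Theorem~\ref{thm: main1-i} the reduced moduli and its stratification are unchanged anyway.)
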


Let us explain the proofs of Theorems~\ref{thm: main1-i}, \ref{thm: main1-ii}, \ref{thm: main2}, and \ref{thm: main3}, which form the main part of the text.

As mentioned above, the starting point is the characterization of the stable surface pairs with $p_g=1$ realizing the minimal volume, which we reproduce in Theorem~\ref{thm: liu2017minimal-volume}. 
For $0<c, c'\leq 1$, we may obtain from $(X, B)\in M_{c}$ a stable surface $(X', B')\in M_{c'}$ by taking the canonical model of $(\hX, c'\lceil B_{\hX}^{>0}\rceil)$, where $\hat\pi\colon\hX\rightarrow X$ is a certain partial resolution, resolving the non-Gorenstein singularities of $(X, B)$, and $K_{\hX} +B_{\hX} = \pi^*(K_X+B)$. Doing this in families over reduced base schemes, we obtain a morphism $\sigma_{c,c'}\colon M_{c, \red}\rightarrow M_{c',\red}$; see Theorem~\ref{thm: same reduced}. Reversing the roles of $c$ and $c'$, we have a morphism $\sigma_{c',c}\colon M_{c', \red}\rightarrow M_{c,\red}$, which is the inverse of $\sigma_{c,c'}$. This implies that $\sigma_{c,c'}$ is an isomorphism, proving Theorem~\ref{thm: main1-i}.

In Section~\ref{sec: canonical ring}, we prove Theorem~\ref{thm: main2}, using Blache's Riemann--Roch theorem for projective normal surfaces (\cite{blache1995riemann-roch})
which allows us to compute the generating series of
the plurigenera.

In Section~\ref{sec:proof-by-degenerations} we prove that for $c\le\frac7{13}$ the normalization of $M_c$ is isomorphic to $\oF_\Lattice^\rBB$, proving a weaker form of 
Theorem~\ref{thm: main1-ii}. The proof is based on the work of Alexeev-Brunyate-Engel \cite{alexeev2022compactifications-moduli} on the compactifications of moduli space of elliptic K3 surfaces. We also construct nice nef Kulikov models for the families of K3 surfaces in $F_\Lattice$, and show that $\oF^\rBB_\Lattice$ provides an example of a KSBA compactification of moduli spaces of K3 surface pairs $(X,\epsilon  R)$ with a recognizable divisor 
$R$
in the sense of \cite{alexeev2023compact}. 

In Section~\ref{sec:brieskorns-family} we provide a second, more direct proof of Theorem~\ref{thm: main1-ii}, which is based on our Theorem~\ref{thm: canonical ring 2} 
and a construction of Brieskorn \cite{brieskorn1981unfolding-of-exceptional} of a family of surfaces over the  weighted projective space $\bP:=\bP(4, 10, 12, 16, 18, 22, 24, 28, 30, 36, 42)$. It turns out that the fibers of this family are exactly the stable surface pairs $(X,cD)\in M_c$ for $0<c\le \frac7{13}$, so that $\bP$ can be identified with the moduli stack of our stable surface pairs. In addition, $\bP=\oF_\Lambda^\rBB$ by \cite[Theorem 5]{brieskorn1981unfolding-of-exceptional}.

Finally, in Section~\ref{sec: hyperbolicity}, we deal with the Viehweg hyperbolicity of a (Whitney) equisingular family 
$f\colon \sX\rightarrow V$ in $M_c$ over a smooth quasi-projective variety $V$. 
By resolving singularities simultaneously, we obtain a smooth family of K3 surfaces or elliptic curves. If $f$ is of maximal variation, then so is the corresponding family of K3 surfaces or elliptic curves. It follows then that $V$ is of log general type by \cite{popa2017viehweg-hyperbolicity}; see Theorem~\ref{thm: hyperbolicity}.

\medskip

\noindent{\bf Notation and Conventions.} We work over the complex numbers $\CC$.

A variety means an integral scheme of finite type (over $\CC$). 

A curve (resp.~surface) means a connected reduced scheme of finite type, all of whose irreducible components have dimension one (resp.~two).

Let $X$ be a scheme (over $\CC$), we denote by $X_{\red}$ its reduced part and by $X_{\Sing}$ the singular locus of $X$.
For a cycle with $\RR$-coefficients $B=\sum_i b_i B_i$ on a reduced scheme $X$, its round-up is defined to be $\lceil B\rceil:=\sum \lceil b_i\rceil B_i$, where $\lceil b_i\rceil$ is the smallest integer satisfying $\lceil b_i\rceil \geq b_i$, and the round-down $\lfloor B\rfloor:=\sum \lfloor b_i\rfloor B_i$ is defined similarly. The reduced part of $B$ is defined as $B_\red:=\sum_{b_i\neq 0} B_i$. The cycle $B=\sum_i b_iB_i$ is called a $\RR$-divisor on $X$ if $B_i$ is not contained in the singular locus $\Sing(X)$ of $X$ and if the codimension $\codim_X(B_i)=1$ for each $i$. An $\RR$-divisor is \emph{effective} if each of the coefficients $b_i$ is nonnegative.

For a birational class $\sS$ of projective normal surfaces, we choose a canonical divisor $K_X$ for each $X\in \sS$ so that if there is a birational morphism $f\colon X_1\rightarrow X_2$ 
in $\sS$
then $K_{X_2}=f_*K_{X_1}$.

We use Kodaira's notation, $\I_k, \II, \II^*$ and so on for the type of fibers of an elliptic fibration.

\medskip

\noindent{\bf Acknowledgment.} The project was initiated while the second author was visiting Leibniz Universit\"at Hannover in July 2024; he enjoyed the academic atmosphere as well as the hospitality at the Institut f\"ur algebraische Geometrie. The first two authors would like to thank the organizers of the 2025 SCMS Algebraic Geometry Summer School at Fudan University, where a major part of the work has been done. The first author was partially supported by the NSF under DMS-2501855. The second author was partially supported by the NSFC (No.~12571046).
The third author's research is partly conducted in the framework of the research training
group GRK 2965: From Geometry to Numbers,
funded by DFG.

\section{Preliminaries}
In this section, we recall the definition of stable surface pairs and stable families thereof.

Recall that a demi-normal scheme is a (reduced) scheme that is $S_2$ and whose codimension 1 points are either regular points or nodes (\cite[Definition~5.1]{kollar2013singularities-of-the-minimal}).
 A \emph{surface pair} $(X,B)$ consists of a connected demi-normal surface $X$ and an effective Weil $\RR$-divisor $B$ such that $\Supp B$ does not contain any irreducible component of the non-normal locus of $X$ and that $K_X+B$ is $\RR$-Cartier. When $B=0$, we usually omit the word "pair". The surface pair $(X, B)$ is called normal (resp.~projective) if $X$ is so; it is called \emph{rational} (resp.~\emph{K3}) if $X$ is birational to $\PP^2$ (resp.~to a K3 surface). 

For a normal surface pair $(X,B)$, we may take a log resolution $f\colon Y\rightarrow X$ so that $\Exc(f)\cup f^{-1}_*B$ has simple normal crossing support, where $\Exc(f)$ is the exceptional locus of $f$ and $f^{-1}_*B$ is the strict transform of $B$ on $Y$. We
can write
\[
K_Y + B_Y = f^*(K_X+B)
\]
such that $f_*B_Y=B$. One says that $(X, B)$ is \emph{log canonical} (abbreviated as lc) if the coefficients of $B_Y$ are at most $1$.

For a general (possibly non-normal)  surface pair $(X, B)$, we may take the normalization $\pi\colon \oX\rightarrow X$. Let $\oX=\bigsqcup_{i=1}^r\oX_i$ be the decomposition into irreducible (=connected) components. Let $\oD\subset\oX$ be the conductor divisor and $\oD_i=\oD\cap \oX_i$. Then each $(\oX_i, \oD_i)$ is a normal surface pair. We say that $(X, B)$ is \emph{semi log canonical} (abbreviated as slc) if $(\oX_i, \oD_i)$ is log canonical for each $1\leq i\leq r$.
\begin{defn}
 A surface pair $(X,B)$ is called \emph{stable} if it is projective and has semi log canonical singularities, and if $K_X+B$ is an ample $\RR$-Cartier divisor.
\end{defn}

For any coefficient vector $\ba=(a_1,\dotsc,a_r)\in (0,1]^r$ and a positive real number $v$, called the volume, 
there is a projective coarse moduli space $\SP(\ba,2,v)$ of stable surface pairs $(X,B)$ satisfying the following conditions (\cite[Theorem~8.1]{kollar2023families-of-varieties}):
\begin{enumerate}
    \item $(K_X+B)^2=v$;
    \item $B=\sum_{i=1}^r a_iD_i$ for effective nonzero $\bZ$-divisors $D_i$.
\end{enumerate}
Noting that $p_g(\sX_s)=h^2(\sX_s, \sO_{\sX_s})$ is locally constant in $s$ in a stable family $f\colon\cX\to S$ by \cite[Theorem~2.62]{kollar2023families-of-varieties}, there is an open and closed subscheme $\SP(\ba,2,v,p)\subset \SP(\ba,2,v)$ in which every surface satisfies the condition $p_g(X):=h^0(X, K_X)=p$.

We will use a special case of this moduli space, when the geometric genus is $p_g(X)=1$ and the volume $v=v(c)$ is minimal possible for a given set $\cC$ of coefficients
with $\min(\sC\cup\{1\})=c$.
In Theorem~\ref{thm: liu2017minimal-volume} we find the explicit value of $v(c)$ and prove that there are only three cases:
\begin{enumerate}
    \item[(1)] $0<c\le \frac7{13}$, in which case $B=cD$ for an irreducible and reduced divisor~$D$, so that $D = B_\red$, and in addition one has $K_X=0$.
    \item[(2)] $\frac7{13} <c< \frac6{11}$, in which case $B=cD$ for an irreducible and reduced divisor~$D$, so that $D = B_\red$. 
    \item[(3)] $\frac6{11}\le c\le 1$, in which case $B=0$.
\end{enumerate}

For simplicity of notation, we denote $M_c:=\SP((c),2,v(c),1)$ in the first two cases, resp. $M_c:=\SP(\emptyset,2,v(c),1)$ in the third case. Let us emphasize that there are essentially only three spaces $M_c$ here: for $c_1,c_2$ in the same interval one has a natural isomorphism $M_{c_1}=M_{c_2}$.

In general, defining a family $f\colon (\cX,\cB)\to S$ of stable pairs with a nonzero boundary $B$ is delicate, with the main difficulty being that generally the families of divisors $\cD_i\to S$ need not be flat, and if $\cD_i$ are considered to be closed subschemes of $\cX$ then some fibers may acquire embedded components. None of these complications appear in our limited situation, so we can define quite simply:

\begin{defn}
    A family $f\colon (\cX,\cB=c\cD)\to S$, resp. $f\colon (\cX,\cB=0)\to S$
    in $M_c$,
    is a flat morphism $\cX\to S$ such that 
    \begin{enumerate}
        \item $K_{\cX/S}$ is a relative $\bQ$-Cartier divisor, 
        \item $\cD$ is a relative $\bQ$-Cartier divisor, and
        \item every fiber $(X,B)$ is a stable pair
        in $M_c$
        of volume $v(c)$  with $p_g(X)=1$.
    \end{enumerate}
\end{defn}
In the Calabi-Yau case (1) $0<c\le \frac7{13}$ above, this holds by \cite[Theorem~3.13]{alexeev2023stable-pair}
(see also \cite{kollar2020moduli-of-polarized, birkar2023geometry-polarized}
for a higher-dimensional version). In either case (1), (2) or (3), we can use the A-stability of \cite[Section~6.4]{kollar2023families-of-varieties} or AFI-stability of \cite[Section~8.3]{kollar2023families-of-varieties}. We also note that since $\frac7{13}>\frac12$, the family of divisors $\cD\to S$ is flat by \cite[Section~2.7]{kollar2023families-of-varieties}.

\section{Stable surfaces with \texorpdfstring{$p_g=1$}{}, achieving the minimal volume}
\label{sec: characterize X}

In this section, we recall in Theorem~\ref{thm: liu2017minimal-volume} the characterization of stable surface pairs $(X, B)$ with $p_g(X,B)=1$ and minimal possible volume, given in \cite{liu2017minimal-volume}. Then we refine it by showing that there is a genus 1 fibration on a smooth model $Z$ of $X$ (Lemma~\ref{lem: elliptic fibration}). In case $X$ is rational, we can even classify the relatively minimal model $\oZ$ of  $Z$ over $\PP^1$ into two isomorphism classes (Lemma~\ref{lem: X22 X211}).

Let us first introduce some notation.

\begin{nota}\label{nota: resolve X}
Let $\sC\subset (0,1]$ be a subset, possibly empty, such that $\sC\cup\{1\}$ attains the minimum, say $c=\min\left(\sC\cup\{1\}\right)$. Let $\sS(\sC,1)$ be the set of
isomorphism classes of normal stable surface pairs
$(X, B)$ such that
\begin{itemize}
    \item $B\in \sC$, that is, the coefficients of $B$ come from $\sC$, 
    \item $p_g(X, B):=\dim H^0(X, K_X+\lfloor B\rfloor) = 1$.
\end{itemize}
 Consider the following diagram:
\begin{equation}\label{diag: triangle}
\begin{split}
\begin{tikzpicture}
\node (tX) at (0,0) {$({\tX}, B_{\tX})$};
\node (X)[below left=1cm of  tX] {$(X, B)$};
\node (Z)[below right=1cm of  tX] {$(Z,B_Z)$};
\draw[->](tX.south west)--(X) node[above, midway]{$\pi$};
\draw[->](tX.south east)--(Z)node[above, midway]{$\mu$};
\end{tikzpicture} 
\end{split}
\end{equation}
where $\pi\colon\tX\rightarrow X$ is the minimal resolution of singularities, $B_{\tX}$ is the effective $\RR$-divisor on $\tX$ such that $\pi^*(K_X+B)=K_{\tX}+B_{\tX}$ and $\pi_*B_{\tX} = B$, and $\mu\colon \tX\rightarrow Z$ is the minimal model of $(\tX, \lfloor B_{\tX}\rfloor)$.

Let $B_{\tX}=B_{\tX}^\sss + B_{\tX}^\ns$ be the decomposition into the semistable part  $B_{\tX}^\sss$ and the non-semistable part $B_{\tX}^\ns$ (\cite[Definition~4.4]{liu2017minimal-volume}),
and denote
\[
B_Z=\mu_*B_{\tX},\quad B_Z^{\sss} = \mu_*B_{\tX}^\sss,\quad B_Z^{\ns} = \mu_*B_{\tX}^\ns.
\]
\end{nota}

\begin{thm}[{\cite[Theorem~1.1 and Proposition~7.13]{liu2017minimal-volume}}]\label{thm: liu2017minimal-volume}
Keep Notation~\ref{nota: resolve X}. 
(1) The set $\{(K_X+B)^2 \mid (X,B)\in \sS(\sC,1)\}$ of volumes attains a minimum $v(c)$, which depends only on $c$. More precisely, we have
\begin{equation}\label{eq: min vol}
  v(c)=
\begin{cases}
\frac{1}{42}c^2, & \text{ if } 0<c\leq \frac{7}{13} \\
-\frac{11}{6} c^2 +2c -\frac{7}{13}, & \text{ if }  \frac{7}{13}<c<\frac{6}{11} \\
\frac{1}{143}, & \text{ if }  c\geq \frac{6}{11}.
\end{cases}  
\end{equation}

(2) Suppose that the volume of $(X, B)\in \sS(\sC,1)$ is $v(c)$. Then the coefficients of $B$ lie in $\{c\}$, and $p_g(X)=1$. Furthermore, the following holds for $(Z,B_Z)$:
\begin{itemize}
    \item $Z$ is either a smooth K3 surface or a smooth rational surface.
    \item $K_Z+B_Z^{\sss}\sim 0$, that is, $B_Z^{\sss}\in |-K_Z|$.
    \item $\Supp B_Z^{\sss} \cap \Supp B_Z^{\ns}=\emptyset$, and
   \item $\lceil B_Z^{\ns}\rceil$ is the union of $(-2)$-curves with following dual graph:
\begin{center}
  \begin{tikzpicture}[font=\small]
    \begin{scope}
        \foreach \x in {0, 1,...,7}  
    \draw (\x,0)--(\x+1,0);
    \draw (6,0)--(6,1);    
   \foreach \x in {0,...,8}
\node[wbullet, label=below: $\Theta_{\x}$] at (\x,0){};  
\node [wbullet, label=right: $\Theta_{9}$] at (6,1) {};  
   \end{scope} 
\end{tikzpicture}
  \end{center}
\end{itemize}
Moreover, the following holds for $(\tX, B_{\tX})$:
\begin{enumerate} 
  \item $(\tX, B_{\tX})$ and $(\tX, B_{\tX}^\sss + c\lceil B_{\tX}^\ns\rceil)$ have the same canonical model, which is $(X, B)$. 
  \item 
 If $Z$ is rational, then the map $\mu$ is an isomorphism over a neighborhood of $B_Z^\sss$, the semistable part $B_{\tX}^\sss$ of the boundary divisor on $\tX$ is connected, and it is contracted into a simple elliptic or cusp singularity on $X$.
      \item If $c\leq \frac{7}{13}$, then $\tX=Z$, and $X$ is obtained from $\tX$ by contracting 
      the components of $\lceil B_Z\rceil-\Theta_6$, which includes $B_{\tX}^\sss$, as well as the $(-2)$-curves not intersecting $\lceil B_Z\rceil$. The coefficients of the components in $B_{\tX}^\ns$ are specified in the dual graph as follows:
      \begin{center}
  \begin{tikzpicture}[font=\small]
    \begin{scope}
        \foreach \x in {0,1,...,7}  
    \draw (\x,0)--(\x+1,0);
    \draw (6,0)--(6,1);    
\node[wbullet, label=below: $\frac{2c}{7}\Theta_{1}$] at (1,0){};  
\node[wbullet, label=below: $\frac{3c}{7}\Theta_{2}$] at (2,0){};  
\node[wbullet, label=below: $\frac{4c}{7}\Theta_{3}$] at (3,0){};  
\node[wbullet, label=below: $\frac{5c}{7}\Theta_{4}$] at (4,0){};  
\node[wbullet, label=below: $\frac{6c}{7}\Theta_{5}$] at (5,0){};  
\node[wbullet, label=below: $c\Theta_{6}$] at (6,0){};  
\node[wbullet, label=below: $\frac{2c}{3}\Theta_{7}$] at (7,0){};  
\node[wbullet, label=below: $\frac{c}{3}\Theta_{8}$] at (8,0){};  
\node [wbullet, label=below: $\frac{c}{7}\Theta_0$] at (0,0) {}; 
\node [wbullet, label=right: $\frac{c}{2}\Theta_{9}$] at (6,1) {};  
   \end{scope} 
\end{tikzpicture}
  \end{center}
In this case, the boundary divisor $B$ on $X$ is
\[
B=\pi_*B_{\tX} = c\oTheta_6, \quad\text{where the curve } \oTheta_6=\pi_*\Theta_6
\]
is nonzero with irreducible support.

      \item If 
    $\frac{7}{13}<c<\frac{6}{11}$, then $\tX$ is obtained by blowing up $Z$ at the intersection point $\{R\}=\Theta_5\cap \Theta_6$. Let $E$ be the exceptional curve of $\mu\colon\tX\rightarrow Z$. Then $ B_{\tilde X}^\ns$ is the strict transform of $B_Z^\ns$, and the dual graph of $\lceil B_{\tilde X}^\ns\rceil+E$ is as follows:
\begin{center}
  \begin{tikzpicture}[font=\small,scale=1.1]
        \foreach \x in {0,1,...,8}  
    \draw (\x,0)--(\x+1,0);
    \draw (7,0)--(7,1);    
\node[wbullet, label=below: $\frac{1}{13} \widetilde\Theta_0$] at (0,0){}; 
\node[wbullet, label=below: $\frac{2}{13} \widetilde\Theta_1$] at (1,0){};  
\node[wbullet, label=below: $\frac{3}{13} \widetilde\Theta_2$] at (2,0){};  
\node[wbullet, label=below: $\frac{4}{13} \widetilde\Theta_3$] at (3,0){};  
\node[wbullet, label=below: $\frac{5}{13} \widetilde\Theta_4$] at (4,0){};  
\node[wbullet, label=below: $\frac{6}{13} \widetilde\Theta_5$] at (5,0){};  
\node[bbullet, label=below: $0E$] at (6,0){};  
\node[wbullet, label=below: $c\widetilde\Theta_6$] at (7,0){};  
\node[wbullet, label=below: $\frac{2c}{3} \widetilde\Theta_7$] at (8,0){};  
\node[wbullet, label=below: $\frac{c}{3} \widetilde\Theta_8$] at (9,0){};  
\node[wbullet, label=right: $\frac{c}{2} \widetilde\Theta_9$] at (7,1) {};
\end{tikzpicture}
  \end{center}
where $\widetilde\Theta_i \subset \tX$ are the strict transforms of $\Theta_i$, so we have
\[
\widetilde\Theta_i^2=-2 \quad \text{for $i\neq 5,6$}, \quad \widetilde\Theta_5^2 =\widetilde\Theta_6^2=-3,\quad E^2=-1
\]
and the fractional number beside
each node denotes the coefficient of the corresponding curve in $B_{\tX}^\ns$.
In this case, 
all the $\widetilde\Theta_i$ except for $i = 6$ are contracted by $\pi\colon\tX\rightarrow X$, and hence
the boundary $B$ on $X$ is 
\[
B=\pi_*B_{\tX} = c\pi_*\widetilde\Theta_6
\]
\item If $\frac{6}{11}\leq c\leq 1$, then $\tX$ and the dual graph of $\lceil B_{\tilde X}^\ns\rceil+E$ is the same as in (iv), with the only difference that, the $c$'s appearing in the coefficients of $\widetilde{\Theta}_i$, $6\leq i\leq 9$ must be replaced with $\frac{6}{11}$. Moreover, all the $\widetilde{\Theta}_i$, $0\leq i\leq 9$ are contracted by $\pi\colon \tX\rightarrow X$, so that the boundary divisor on $X$ is zero (i.e., no boundary divisor).
  \end{enumerate}
\end{thm}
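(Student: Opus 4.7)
The plan is to reduce the volume minimization to a combinatorial optimization over the non-semistable part of the boundary, and then carry out a case analysis in $c$ to recover $(X, B)$ from $(Z, B_Z)$.

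First, I would use that $\pi\colon \tX \to X$ is crepant, so $(K_X+B)^2 = (K_{\tX}+B_{\tX})^2$. The condition $p_g(X,B)=1$ constrains the Kodaira dimension of $Z$ to be either $0$ (forcing $Z$ to be K3 with $K_Z\sim 0$) or $-\infty$ (forcing $Z$ to be rational with $B_Z^{\sss}\in |-K_Z|$ contributing the unique section of $K_X+\lfloor B\rfloor$). In either case $K_Z+B_Z^{\sss}\sim 0$. The disjointness $\Supp B_Z^{\sss}\cap \Supp B_Z^{\ns}=\emptyset$ follows from an adjunction argument: a common component would violate log canonicity of $(Z, B_Z)$ along $B_Z^{\sss}$.

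For part~(1), I would then compute $(K_{\tX}+B_{\tX})^2$ in terms of the dual graph $\Gamma$ of $\lceil B_Z^{\ns}\rceil$ and minimize over $\Gamma$. Because $K_Z+B_Z^{\sss}\sim 0$ and the two supports are disjoint, the volume reduces to an intersection-theoretic invariant of $\Gamma$ alone; log canonicity of the associated slc singularity on $X$ pins down the coefficients on $\Gamma$ uniquely in terms of $c$. A comparison across the finite list of admissible dual graphs shows that the displayed $\widetilde{E}_8$-type diagram with an extra trident branch realizes the minimum, and a direct computation on this graph then reproduces the piecewise formula~\eqref{eq: min vol}.

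For part~(2), the structure of $(Z, B_Z)$ at equality is immediate from the optimization above, and the detailed combinatorics in (i)--(v) come from running the log MMP for $(\tX, B_{\tX}^{\sss} + c\lceil B_{\tX}^{\ns}\rceil)$ as $c$ varies. In the range $0 < c \le \tfrac{7}{13}$, only $\Theta_6$ has positive intersection against $K_{\tX}+B_{\tX}$, so every other component of $\lceil B_Z\rceil$ is contracted by $\pi$, yielding (iii). At $c = \tfrac{7}{13}$ the pair fails to be log canonical at the node $\Theta_5 \cap \Theta_6$, forcing the extra blow-up $\mu\colon \tX \to Z$ with the new $(-1)$-curve $E$ and the fractional coefficients displayed in (iv). At $c = \tfrac{6}{11}$ even $\widetilde\Theta_6$ becomes contractible, absorbing the boundary into a higher-index singularity of $X$ and yielding (v). The hard part will be the transition-value bookkeeping at $c = \tfrac{7}{13}$ and $c = \tfrac{6}{11}$ --- checking that the resulting $(X, B)$ remains slc with $K_X+B$ ample and that the displayed dual graphs really describe the canonical model --- which is the technical core of \cite{liu2017minimal-volume}.
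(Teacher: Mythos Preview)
Your proposal misreads the task. In the paper, this theorem is almost entirely a \emph{citation} to \cite{liu2017minimal-volume}; the authors explicitly state that ``only (ii) is not explicitly contained in \cite{liu2017minimal-volume}'' and then supply a short argument for (ii) alone. So the relevant question is whether your proposal proves (ii), not whether it sketches the global minimization from scratch.

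On that point there is a genuine gap. Item (ii) has three assertions: $\mu$ is an isomorphism near $B_Z^{\sss}$; $B_{\tX}^{\sss}$ is connected; and $B_{\tX}^{\sss}$ is contracted to a simple elliptic or cusp singularity on $X$. Your outline does not address any of these. The paper's argument for connectedness is cohomological: from $K_Z+B_Z^{\sss}\sim 0$ one has the exact sequence $0\to \sO_Z(K_Z)\to \sO_Z\to \sO_{B_Z^{\sss}}\to 0$, and rationality of $Z$ gives $H^1(Z,\sO_Z(K_Z))=H^1(Z,\sO_Z)=0$, forcing $H^0(\sO_{B_Z^{\sss}})\cong\CC$. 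For the contraction, one checks $(K_{\tX}+B_{\tX})\cdot B_{\tX}^{\sss}=0$ using the disjointness of $B_Z^{\sss}$ and $B_Z^{\ns}$, so $B_{\tX}^{\sss}$ is $\pi$-exceptional; since it is semistable of positive arithmetic genus, the image is simple elliptic or a cusp. None of this appears in your sketch.

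Separately, your explanation of the transition at $c=\tfrac{7}{13}$ is incorrect: the pair does \emph{not} fail to be log canonical there (the coefficient of $\Theta_5$ is $\tfrac{6c}{7}\le\tfrac{6}{13}<1$). The transition is a wall-crossing in the log MMP: for $c>\tfrac{7}{13}$ the minimal resolution $\tX$ of the canonical model $X$ is no longer $Z$ itself but the blow-up of $Z$ at $\Theta_5\cap\Theta_6$, because the configuration of curves contracted by $\pi$ changes. Framing this as a log-canonicity failure would lead you to look for a nonexistent obstruction.
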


\begin{proof}
Only (ii) is not explicitly contained in \cite{liu2017minimal-volume}. The first statement of (ii) that $\mu$ is an isomorphism over a neighborhood of $B_Z^\sss=\mu_*B_{\tX}^\sss$ is explained in \cite[Notation 5.1]{liu2017minimal-volume}. Since $K_Z+B_Z^\sss\sim0$, we have a short exact sequence 
\begin{equation}\label{eq: ses}
0\rightarrow \sO_{Z}(K_Z)=\sO_{Z}(-B_Z^\sss)\rightarrow \sO_Z\rightarrow \sO_{B_Z^\sss}\rightarrow 0    
\end{equation}
Since $Z$ is rational, we have $H^1(Z, \sO_{Z}(K_Z))\cong H^1(Z, \sO_Z)=0$, and hence a surjection $H^0(Z, \sO_Z)\cong \CC\twoheadrightarrow H^0(B_Z^\sss, \sO_{B_Z^\sss})$ by the long exact sequence associated to \eqref{eq: ses}. It follows that $H^0(B_Z^\sss, \sO_{B_Z^\sss})\cong \CC$ and hence $B_Z^\sss$ is connected.

Since $\mu$ is an isomorphism over a neighborhood of $B_Z^\sss$ and $\Supp B_Z^{\sss} \cap \Supp B_Z^{\ns}=\emptyset$, we have 
\[
(K_{\tX}+B_{\tX})\cdot{B_{\tX}^\sss}=(K_Z+B_Z)\cdot B_Z^\sss=(K_Z+B_Z^\sss)\cdot B_Z^\sss = 0.
\]
It follows that $B_{\tX}^\sss$ is contracted to a point on the canonical model $X$. Since $B_{\tX}^\sss$ is semistable and hence has positive arithmetic genus, it is necessarily contracted to a simple elliptic or cusp singularity on $X$. 
\end{proof}

\begin{defn}\label{defn:types123}
The stable surface pair $(X, B)$ in Theorem~\ref{thm: liu2017minimal-volume} (2) is called 
\begin{itemize}
    \item of Type $\I$ if it is $K3$;
    \item of Type $\II$ if it is rational and has a simple elliptic singularity;
    \item of Type $\III$ if it is rational and has a cusp singularity.
\end{itemize}
We stratify the moduli space $M_c$ accordingly:
\begin{equation}\label{eq: stratification}
M_c = M_c^{\Kth} \sqcup M_c^{\rm rat},\text{ with } M_c^{\Kth}=M_c^\I
\text{ and } M_c^{\rm rat} = M_c^{\II} \sqcup M_c^{\III}.
\end{equation}
\end{defn}

\begin{defn}\label{defn:T237}
We will call the configuration of ten $(-2)$-curves $\Theta_0,\hdots,\Theta_9$ in part (iii) of Theorem~\ref{thm: liu2017minimal-volume} and its dual graph  \emph{the $T_{2,3,7}$-configuration.} Note that these $10$ curves generate the lattice $\Lattice\cong U \oplus  E_8$ which may alternatively be denoted by $E_{10}$.
\end{defn}

\begin{lem}\label{lem: non nonnormal}
    Let $(X, B)$ be a stable surface pair such that $B\in \sC$, $p_g(X,B)=1$, and $(K_X+B)^2=v(c)$. Then $X$ is normal, and hence $(X,B)\in\sS(\sC,1)$.
\end{lem}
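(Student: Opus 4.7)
The plan is to suppose $X$ is non-normal and derive a contradiction from the explicit classification of minimum-volume pairs, by passing to the normalization and applying Theorem~\ref{thm: liu2017minimal-volume} to the components.

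Suppose for contradiction that $X$ is not normal. Let $\pi\colon\oX\to X$ be the normalization, with decomposition $\oX=\bigsqcup_{i=1}^r \oX_i$ into irreducible components, conductor divisors $\oD_i\subset\oX_i$, and strict transforms $\oB_i$ of $B$. Since $X$ is non-normal, at least one $\oD_i$ is nonzero. By the slc assumption each $(\oX_i,\oD_i+\oB_i)$ is a normal log canonical pair, and because $K_{\oX_i}+\oD_i+\oB_i=\pi^*(K_X+B)|_{\oX_i}$ is the restriction of an ample divisor, it is itself ample. Moreover, the coefficients of $\oD_i+\oB_i$ lie in $\sC\cup\{1\}$, whose minimum is still $c$, and volumes are additive under normalization:
\[
\sum_{i=1}^{r}\bigl(K_{\oX_i}+\oD_i+\oB_i\bigr)^{2}=(K_X+B)^2=v(c).
\]

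The key step is to lift the unique section in $H^0(X,K_X+\lfloor B\rfloor)\cong\CC$ to the normalization. Using the standard description of the reflexive sheaf $\omega_X^{[1]}(\lfloor B\rfloor)$ on an slc surface pair (cf.~\cite[Chapter~5]{kollar2013singularities-of-the-minimal}), pullback induces an injection
\[
H^0(X,K_X+\lfloor B\rfloor)\hookrightarrow\bigoplus_{i=1}^{r} H^0\bigl(\oX_i,K_{\oX_i}+\oD_i+\lfloor \oB_i\rfloor\bigr),
\]
so for some index $i_0$ one has $p_g(\oX_{i_0},\oD_{i_0}+\oB_{i_0})\geq 1$ and the normal pair $(\oX_{i_0},\oD_{i_0}+\oB_{i_0})$ lies in $\sS(\sC\cup\{1\},1)$. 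Applying Theorem~\ref{thm: liu2017minimal-volume}(1) to the coefficient set $\sC\cup\{1\}$ yields $(K_{\oX_{i_0}}+\oD_{i_0}+\oB_{i_0})^2\geq v(c)$. Combined with the displayed volume identity and the positivity of the remaining summands (by ampleness), this forces $r=1$, and the pair $(\oX_1,\oD_1+\oB_1)$ itself realizes the minimum volume $v(c)$ with $\oD_1\neq 0$.

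To conclude, apply Theorem~\ref{thm: liu2017minimal-volume}(2) to $(\oX_1,\oD_1+\oB_1)$: its boundary divisor is either $0$ (when $c\geq\frac{6}{11}$) or of the form $cD'$ for a single irreducible divisor $D'$ (when $c<\frac{6}{11}$, so in particular $c<1$). In both regimes no boundary component can have coefficient $1$, which contradicts the fact that the conductor $\oD_1$ is a nonzero boundary component with coefficient $1$. The main technical obstacle is the lifting in the previous paragraph: verifying the injection of sections from $X$ to $\oX$, and ensuring that the selected component has $p_g$ exactly $1$ so that the minimum-volume bound directly applies (both depending on the standard glueing theory of reflexive $\omega$-sheaves on slc pairs).
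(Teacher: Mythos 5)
Your proof is correct and follows essentially the same route as the paper's: pass to the normalization, locate a component on which the geometric genus is positive and whose boundary contains the nonzero conductor with coefficient one, and contradict the minimal-volume results of \cite{liu2017minimal-volume}. The only difference is in the endgame and is cosmetic: the paper quotes from \cite{liu2017minimal-volume} the strict inequality $(K_{\oX'}+\oB'+\oD')^2>v(c)$ for a pair with $\lfloor \oB'+\oD'\rfloor>0$ and concludes at once, whereas you first force $r=1$ via the weak bound plus additivity of volumes and then rule out a coefficient-one boundary component using the classification in Theorem~\ref{thm: liu2017minimal-volume}(2); both versions are valid.
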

\begin{proof}
    Suppose on the contrary that $X$ is not normal, possibly with several irreducible components.
    Let $\nu\colon \oX\rightarrow X$ be the normalization. Let $\oD\subset \oX$ be the nonzero conductor divisor and $\oB = \nu^{-1}(B)$. Then there is a component $\oX'$ of $\oX$ such that $p_g(\oX', \oB'+\oD')>0$, where $\oB'$ and $\oD'$ are the restrictions of $\oB$ and $\oD$ to $\oX'$ respectively. Since $\lfloor \oB'+\oD'\rfloor \geq \oD'>0$, we have $(K_{\oX'}+\oB'+\oD')^2 >v(c)$ by \cite{liu2017minimal-volume}. It follows that
    \[
    (K_{X}+B)^2 = (K_{\bar X}+\bar B)^2\geq (K_{\oX'}+\oB'+\oD')^2 >v(c)
    \]
where the first inequality is because the volume of $(K_{\bar X},\bar B)$ is the sum of the volumes of its components. This is a contradiction to the assumption.
\end{proof}

\begin{thm}\label{thm: same reduced}
For $0<c\leq 1$, the isomorphism class of the reduced moduli space $M_{c,\red}$ does not depend on $c$.
\end{thm}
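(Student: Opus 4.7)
The plan is to establish the theorem by constructing, for any pair of parameters $c, c' \in (0,1]$, a morphism $\sigma_{c,c'}\colon M_{c,\red}\to M_{c',\red}$ and checking that $\sigma_{c',c}$ is its inverse, exactly as outlined in the introduction. The pointwise construction is dictated by Theorem~\ref{thm: liu2017minimal-volume}: given $(X,B)\in M_c$, take a partial resolution $\hat\pi\colon \hat X\to X$ which resolves only the non-Gorenstein singularities (the ones arising from contracted components of the $T_{2,3,7}$-configuration and of $B_{\tX}^{\sss}$), write $K_{\hat X}+B_{\hat X}=\hat\pi^*(K_X+B)$, and then take the canonical model of the pair $(\hat X, c'\lceil B_{\hat X}^{>0}\rceil)$. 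The crucial observation is that $\hat X$ and the set of irreducible components of $\lceil B_{\hat X}^{>0}\rceil$ are intrinsic to $(X,B)$, while only the coefficients depend on $c$; hence the canonical model of $(\hat X,c'\lceil B_{\hat X}^{>0}\rceil)$ lies in $M_{c'}$, and its volume matches $v(c')$ by direct computation from the weighted dual graphs.

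Next I would upgrade this to a morphism of reduced moduli. Given a family $f\colon (\cX,\cB)\to S$ over a reduced base $S$ classified by $M_{c,\red}$, the goal is to produce $\hat\cX\to \cX$ performing the partial resolution in families. Since the singularities to be resolved are rational (ADE plus the cyclic quotients appearing in parts (iii)--(v) of Theorem~\ref{thm: liu2017minimal-volume}) and their analytic types are constant in the family by the stratification \eqref{eq: stratification}, simultaneous partial resolution exists \'etale-locally on $S$ and glues globally on the reduced base. Then the relative canonical model of $(\hat\cX, c'\lceil \cB_{\hat\cX}^{>0}\rceil)$ is a stable family in $M_{c'}$; this uses the A-stability framework of \cite[Section~6.4]{kollar2023families-of-varieties} together with the flatness of $\cD\to S$ recorded at the end of Section~2. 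The induced classifying morphism is $\sigma_{c,c'}$.

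Finally, both compositions $\sigma_{c',c}\circ\sigma_{c,c'}$ and $\sigma_{c,c'}\circ\sigma_{c',c}$ act as the identity on closed points: pointwise, the minimal resolution $\tX$, the decomposition $B_{\tX}=B_{\tX}^{\sss}+B_{\tX}^{\ns}$, and the supporting curves of $\lceil B_{\tX}^{\ns}\rceil$ are recovered from any $(X',B')$ in the orbit, and passing back to the canonical model with the original coefficient $c$ returns $(X,B)$ uniquely. Since $M_{c,\red}$ is reduced, agreement on closed points implies agreement as morphisms, so $\sigma_{c,c'}$ is an isomorphism. The main obstacle is the family step: one must verify that the partial resolution behaves sufficiently uniformly (in particular, that the weighted configuration varies in a locally trivial way across each stratum $M_c^{\I}$, $M_c^{\II}$, $M_c^{\III}$) and that the relative canonical model commutes with base change on the reduced locus; once this is established, the inverse identity is formal.
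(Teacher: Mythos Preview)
Your plan coincides with the paper's: build $\sigma_{c,c'}$ by passing to a partial resolution $\hat X$, replacing the boundary coefficients by $c'$, and taking the canonical model, then check that $\sigma_{c',c}$ undoes this. But your specification of $\hat X$ has two concrete errors. First, the simple elliptic or cusp singularity contracted from $B_{\tX}^{\sss}$ is Gorenstein and must \emph{not} be resolved: if you extract its exceptional locus and assign each component coefficient $c'<1$ in $c'\lceil\hat B^{>0}\rceil$, the log canonical model does not recontract it, and you land outside $M_{c'}$. Second, ``resolve the non-Gorenstein singularities'' is not enough when $c\le\frac{7}{13}$: there $X$ is Gorenstein, so literally you would resolve nothing, and even resolving the three ADE points on $B$ only produces $\hat X=Z$; but for $c'>\frac{7}{13}$ every surface in $M_{c'}$ has minimal resolution $\Bl_{\Theta_5\cap\Theta_6}Z$, so no contraction of $Z$ can lie in $M_{c'}$. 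The paper fixes both points by resolving exactly the (at most three) \emph{rational} singularities that are either non-Gorenstein or lie on $B$, and then blowing up further until the valuation $E$ of Theorem~\ref{thm: liu2017minimal-volume}(iv) has divisorial center on $\hat X$; this extra extraction is precisely what carries the construction across the threshold $\frac{7}{13}$.

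On the family step the paper avoids your \'etale-local simultaneous resolution and gluing altogether: since the points of $\sA_t$ are pairwise non-isomorphic as singularities, in a stable family they form disjoint sections $\sA\subset\sX$ over the reduced base $V$, and one simply blows $\sX$ up along $\sA$ repeatedly (normality persists by Lipman's result on rational surface singularities). The relative canonical model is then taken globally, with a short extra argument when $c'$ is irrational (approximate by a nearby rational $c''$). Your final verification that $\sigma_{c',c}\circ\sigma_{c,c'}=\id$ on closed points, hence as morphisms between reduced separated schemes, is correct and matches the paper.
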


\begin{proof}
Take two real numbers $0<c, c'\leq 1$. First we establish a natural bijection between the closed points  of the moduli spaces $M_c$ and $M_{c'}$. 

 For $[(X, B)]\in M_c(\CC)$, let $A$ be the union of the non-Gorenstein singularities of $X$ and the singularities of $X$ lying on $B$. Then $A$ consists of (at most 3) rational singularities of $X$, so we may blow up them sufficiently many times, without having to take the normalization in the intermediate steps,\footnote{In fact, the blow-up of a rational surface singularity is still normal by \cite[Proposition~8.1]{lipman1969rational-singularities}.} to obtain a partial resolution $\hat\pi\colon \hX\rightarrow X$ such that the following holds:
\begin{enumerate}
    \item $\hat\pi$ is an isomorphism over the open set $X\setminus A$;
    \item $\hX$ is smooth over a neighborhood of $A$;
    \item the center on $\hX$ of the valuation corresponding to the black dot $E$ in the dual graph of Theorem~\ref{thm: liu2017minimal-volume} (iv) is a curve. 
\end{enumerate}
Write $\hat\pi^*(K_X+B) = K_{\hX} + \hB$ with $\hat\pi_*\hB= B$, and let $\hB^{>0}$ be the part of $\hB$ with positive coefficients. Then, by Theorem~\ref{thm: liu2017minimal-volume}, the canonical model $(X', B')$ of $(\hX, c'\lceil \hB^{>0} \rceil)$ lands in $M_{c'}$. Thus, we obtain a map of sets $\sigma_{c,c'}(\CC)\colon M_c(\CC)\rightarrow M_{c'}(\CC)$. 

We can apply the same construction with the roles of $c$ and $c'$ reversed, and obtain a map $\sigma_{c',c}(\CC)\colon M_{c'}(\CC)\rightarrow M_c(\CC)$, which is the inverse of $\sigma_{c,c'}(\CC)$.

In order to show that $\sigma_{c,c'}(\CC)$ comes from a morphism $\sigma_{c,c'}\colon M_{c,\red}\rightarrow M_{c', \red}$ between the reduced moduli spaces, it suffices to show that we can do the above construction in family over a reduced base.

Thus, let $f\colon (\sX, \sB)\rightarrow V$ be a stable family in $M_c$ over a reduced base scheme $V$, where $\sB=c\lceil\sB\rceil$ and $\Supp\sB$ is $0$ if $c\geq \frac{6}{11}$ and a $\PP^1$-bundle over $V$ if $c<\frac{6}{11}$. Let $\sA_t$ be the union of the non-Gorenstein singularities of $\sX_t$ and the singularities of $\sX$ lying on $\sB_t$.  By Theorem~\ref{thm: liu2017minimal-volume}, the set $\sA_t$ consists of at most 3 points, and, as singularities of $\sX_t$, they are pairwise not isomorphic. Since $f$ is a stable family, $\sA:=\cup_{t}\sA_t$ consists of $\#\sA_t$ disjoint sections over $V$. Blowing up $\sX$ along $\sA$ sufficiently many times, we obtain a partial resolution $\widehat\Pi\colon\widehat \sX\rightarrow \sX$ such that its restriction to each fiber $\widehat\Pi_t\colon\widehat \sX_t\rightarrow \sX_t$, $t\in V$ satisfies the properties (i)--(iii) above.

We may write $\widehat\Pi_t^*(K_{\sX_t} + \sB_t) = K_{\widehat\sX_t}+\widehat\sB_t$ for some $\RR$-divisor $\widehat\sB_t$  with $\widehat\Pi_{t*}\widehat\sB_t=\sB_t$. Let $\widehat\sB_t=\sum_i b_i(c)\widehat \sB_{it}$ be the prime decomposition, where the coefficients $b_i(c)$ are independent of $t$. Then, up to reordering $\{\sB_{it}\}_i$, there is a cycle $\widehat\sB = \sum_i b_i(c)\widehat \sB_{i}$ on $\widehat\sX$ such that each $\widehat\sB_{i}$ is reduced, flat of relative codimension one over $V$, and $\widehat\sB_i\cap\widehat\sX_t=\widehat\sB_{it}$. By Theorem~\ref{thm: liu2017minimal-volume}, there are numbers $b_i(c')\leq c'$, independent of $t$, such that $K_{\widehat\sX_t}+\sum_i b_i(c')\widehat\sB_{it}$ is the positive part of $K_{\widehat\sX_t}+ c'\left\lceil\widehat\sB_t^{>0}\right\rceil$.

If $c'$ is rational, then all the $b_i(c')$ are rational, and we may take a sufficiently large and divisible $m$ such that $m(K_{\widehat\sX_t}+\sum_i b_i(c')\widehat\sB_{it})$ is Cartier and base point free, and thus defines the desired contraction $(\widehat\sX,c'\lceil\widehat\sB^{>0}\rceil) \rightarrow (\sX', \sB'=c'\lceil \sB'\rceil)$ onto the relative canonical model over $V$, using \cite[page 48, Corollary 2]{mumford2008abelian-varieties}. 

If $c'$ is irrational, then we may take a rational $c''$ that is sufficiently close to $c'$, and construct the canonical model $(\widehat \sX,c''\lceil\widehat\sB^{>0}\rceil)\rightarrow (\sX', c''\sB')$ over $V$. Since $c''$ is sufficiently close to $c'$, $(\sX', c'\sB')$ is then the canonical model for the original pair $(\widehat\sX, c'\lceil\widehat\sB^{>0}\rceil)$.

This map, sending the family $(\sX, \sB)\rightarrow V$ in $M_c$ over a reduced scheme to the family $(\sX',\sB')\rightarrow V$ in $M_{c'}$, induces a morphism $\sigma_{c,c'}\colon M_{c, \red}\rightarrow M_{c',\red}$, which restricts to $\sigma_{c,c'}$ on the set of closed points. Since $\sigma_{c',c}$ is obviously the inverse morphism of $\sigma_{c,c'}$, we infer that  $\sigma_{c,c'}$ is an isomorphism.
\end{proof}

\begin{rmk}\label{rmk: M_c vs M_c'}
    The isomorphism $\tilde \sigma_{c,c'}\colon M_{c,\red}\rightarrow M_{c', \red}$ in the proof of Theorem~\ref{thm: same reduced} sends a K3 (resp.~rational) stable surface pair $(X,B)$ to a K3 (resp.~rational) stable surface pair $(X', B')$. In case $X$ and $X'$ are rational, their unique elliptic singularities are isomorphic.
\end{rmk}

\begin{lem}\label{lem: elliptic fibration}
Let the notation be as in Theorem~\ref{thm: liu2017minimal-volume}. Set $$F=\Theta_1+2\Theta_2+3\Theta_3+4\Theta_4+5\Theta_5+6\Theta_6+4\Theta_7 + 2\Theta_8 + 3\Theta_9.$$
 Then $|F|$ has dimension $1$, defining a genus 1 fibration $h\colon Z\rightarrow \PP^1$ with $\Theta_0$ as a section. 
\end{lem}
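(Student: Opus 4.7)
My proof would proceed in three stages: first compute the key numerical invariants of $F$; second, establish via a cohomological argument that $\dim|F|=1$; third, extract the fibration and the section.

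The first step is to compute intersection numbers. Each $\Theta_i$ is a $(-2)$-curve, so adjunction yields $K_Z\cdot\Theta_i=0$ and hence $K_Z\cdot F=0$. The multiplicities $(a_1,\dots,a_9)=(1,2,3,4,5,6,4,2,3)$ are precisely the coefficients of the null root for the affine Cartan matrix of $\widetilde{E}_8$, which is the sub-diagram of $T_{2,3,7}$ obtained by deleting $\Theta_0$. A direct computation using $\Theta_i^2=-2$ and the adjacency relations from the dual graph yields $F\cdot\Theta_j=0$ for $j=1,\dots,9$, whence $F^2=0$, and $p_a(F)=1$ by adjunction. Finally $\Theta_0\cdot F=a_1\cdot(\Theta_0\cdot\Theta_1)=1$ since $\Theta_0$ is adjacent only to $\Theta_1$. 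In particular $F$ is nef: $F\cdot\Theta_j\geq 0$ for all $j$ by the above, and $F\cdot C\geq 0$ for any irreducible $C\not\subset\Supp F$ since the coefficients of $F$ are all positive.

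For the dimension of $|F|$, I would use the standard exact sequence
\[ 0\to\sO_Z\to\sO_Z(F)\to\sO_F(F)\to 0. \]
Since $H^1(\sO_Z)=0$ for both K3 and rational $Z$, cohomology gives $h^0(\sO_Z(F))=1+h^0(\sO_F(F))$, reducing the problem to the identification $\sO_F(F)\cong\sO_F$, which yields $h^0(\sO_F(F))=1$ by the connectedness of $F$. I expect this to be the main obstacle: $\sO_F(F)$ is a priori only numerically trivial on the non-reduced $\widetilde{E}_8$ configuration, and ruling out torsion requires some care. The cleanest argument uses the transverse section $\Theta_0$: since $\Theta_0\cdot F=1$, the curve $F$ meets $\Theta_0$ in a single smooth point and hence cannot be a multiple fiber in Kodaira's classification, which precludes torsion in $\sO_F(F)$. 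Alternatively, one may use the auxiliary sequence $0\to\sO_Z(F-\Theta_0)\to\sO_Z(F)\to\sO_{\PP^1}(1)\to 0$, combined with $h^0(F-\Theta_0)=0$ — which follows from the nefness of $F$ together with $F\cdot(F-\Theta_0)=-1<0$ — to pin down $h^0(F)$ via Riemann--Roch.

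Finally, to show $|F|$ defines the desired morphism, note that a fixed part $N$ of $|F|$ would be supported on $\Supp F$ and satisfy $F\cdot N=0$. Since the intersection form on the $(-2)$-configuration $\{\Theta_1,\dots,\Theta_9\}$ is negative semidefinite with one-dimensional radical spanned by $F$ itself (the $\widetilde{E}_8$ null-root relation), the only effective $N\leq F$ with $N^2\geq 0$ — the minimum for the moving part of a pencil — is a rational multiple of $F$, forcing the fixed part to vanish. Thus $|F|$ is base-point free, and produces the morphism $h\colon Z\to\PP^1$ whose general fiber is a smooth genus-$1$ curve in $|F|$. The intersection $\Theta_0\cdot F=1$ immediately gives that $h|_{\Theta_0}\colon\Theta_0=\PP^1\to\PP^1$ has degree $1$ and hence is an isomorphism, exhibiting $\Theta_0$ as a section.
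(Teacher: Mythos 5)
Your overall strategy coincides with the paper's: the same intersection computations, the same exact sequence $0\to\sO_Z\to\sO_Z(F)\to\sO_F(F)\to 0$ with $H^1(\sO_Z)=0$, and the same reduction to the statement that $\sO_F(F)\cong\sO_F$ with $h^0(\sO_F(F))=1$. You correctly single this out as the delicate point, but neither of your two proposed justifications closes it. The appeal to ``Kodaira's classification of multiple fibers'' is circular: there is no fibration yet --- producing one is the content of the lemma --- so $F$ cannot be discussed as a fiber, multiple or otherwise. Likewise ``$h^0(\sO_F(F))=1$ by the connectedness of $F$'' is not enough for a non-reduced curve: connected non-reduced curves can have $h^0(\sO_F)>1$, and a line bundle can be trivial on every component of $F$ yet nontrivial on $F$ (this already happens on a cycle of rational curves). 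The tool the paper invokes is Mumford's lemma on indecomposable divisors of canonical type \cite[page 332, Lemma]{mumford1969eriques-classification}: since $F\cdot\Theta_i=K_Z\cdot\Theta_i=0$ for all components, $\Supp F$ is connected, and the multiplicities have gcd $1$ ($\Theta_1$ occurs with multiplicity $1$), one gets $H^0(F,\sO_F)\cong\CC$ and the statement that any line bundle on $F$ of degree $0$ on each component \emph{with a nonzero section} is trivial. The nonzero section of $\sO_F(F)$ comes from $\sO_F(F)\cong\sO_Z(K_Z+F)|_F\cong\omega_F$ --- using that $K_Z|_F\cong\sO_F$ because $K_Z=0$ in the K3 case and $-K_Z\sim B_Z^{\sss}$ is disjoint from $F$ in the rational case --- together with $h^0(\omega_F)=h^1(\sO_F)=1$, since $\chi(\sO_F)=-\tfrac12 F\cdot(F+K_Z)=0$.

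Your fallback via $0\to\sO_Z(F-\Theta_0)\to\sO_Z(F)\to\sO_{\PP^1}(1)\to0$ does give the correct upper bound $h^0(\sO_Z(F))\le 2$, but Riemann--Roch only yields $h^0(\sO_Z(F))\ge\chi(\sO_Z(F))=\chi(\sO_Z)$, which is $2$ for a K3 surface but only $1$ when $Z$ is rational. The rational case genuinely occurs (Types $\II$ and $\III$), so as written this route does not produce the pencil there; it can be repaired, e.g.\ by bounding $h^1(\sO_Z(F))=h^1(\sO_Z(-B_Z^{\sss}-F))$ from below using the disjointness of $B_Z^{\sss}$ and $F$, but that extra step is missing. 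The remaining parts of your proposal --- nefness of $F$, base-point freeness via negative semidefiniteness of the $\tE_8$ configuration with radical spanned by $F$, and $\Theta_0$ being a section because $\Theta_0\cdot F=1$ --- are correct and in fact more detailed than the paper, which leaves base-point freeness implicit.
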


\begin{proof}
For each $0\leq i\leq 8$, we have, 
\[
\sO_Z(F)|_{\Theta_i}\cong \sO_Z(K_Z+F)|_{\Theta_i}\cong \sO_{\Theta_i},
\]
and hence $\sO_F(F) \cong \sO_F$ and $H^0(F, \sO_F)\cong \CC$ by \cite[page 332, Lemma]{mumford1969eriques-classification}. By the long exact sequence of cohomology associated to the short exact sequence
\[
0\rightarrow \sO_Z \rightarrow \sO_Z(F) \rightarrow \sO_F(F) \rightarrow 0
\]
we infer that $\dim H^0(Z, \sO_Z(F))=2$, and there is a fibration $h\colon Z\rightarrow \PP^1$ with $F$ as a fiber. Since $\Theta_0\cdot F = \Theta_0\cdot \Theta_1=1$, $\Theta_0$ is a section of $h$.
\end{proof}

\begin{lem}\label{lem: X22 X211}
Let the notation be as in Theorem~\ref{thm: liu2017minimal-volume} and Lemma~\ref{lem: elliptic fibration}. 
Suppose that $Z$ is rational. Let $\varphi\colon Z\rightarrow \oZ$ be the relative minimal model over $\PP^1$, contracting the $(-1)$-curves in the fibers of $h$, and let $\bar h\colon\oZ\rightarrow\PP^1$ be the induced elliptic fibration:
\[
\begin{tikzcd}
    Z \arrow[rd, "h"']\arrow[rr, "\varphi"] &  & \oZ\arrow[ld, "\bar h"] \\
    & \PP^1 &
\end{tikzcd}
\]
Then the following holds.
\begin{enumerate}
    \item There is a $(-1)$-curve $G$ on $Z$ such that $B_Z^{\sss}+G\sim F$ and $\varphi$ is the contraction of $G$.
    \item $\oZ$ is isomorphic to one of the extremal rational elliptic surfaces $X_{22}$ or $X_{211}$, defined in \cite[Theorem~4.1]{miranda1986on-extremal-rational}.
    \item 
    The unique elliptic or cusp singularity of $X$ has degree $1$, that is, its exceptional divisor $B_{\tX}^\sss$ on the minimal resolution has self-intersection $(B_{\tX}^\sss)^{2}=-1$.
\end{enumerate}
\end{lem}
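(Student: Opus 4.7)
I would attack the three parts in the order (ii), (i), (iii).

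\emph{Part (ii).} First observe that $\varphi$ only contracts $(-1)$-curves in fibers of $h$, so it preserves the $\II^*$-configuration $F$, whose components are all $(-2)$-curves. Hence $\oZ$ is a relatively minimal rational elliptic surface with an $\II^*$-fiber. Noether's formula yields $e(\oZ)=12$, leaving Euler number $2$ for the remaining singular fibers. Combined with Shioda--Tate (forcing Mordell--Weil rank $0$) and Kodaira's classification, the only possibilities are $\II+\II^*$ and $\I_1+\I_1+\II^*$, i.e.\ $X_{22}$ and $X_{211}$.

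\emph{Part (i).} The plan is to study the class $\Gamma:=F-B_Z^\sss\sim F+K_Z$ via the short exact sequence
\[
0\to \sO_Z(K_Z)\to \sO_Z(K_Z+F)\to \omega_F\to 0.
\]
Using $h^0(K_Z)=p_g(Z)=0$, $h^1(K_Z)=0$ (rationality), and $h^0(\omega_F)=p_a(F)=1$, one concludes $h^0(Z,\Gamma)=1$. Call $G$ the unique effective member. Elementary intersections, using $B_Z^\sss\cdot F=0$ (from $\Supp B_Z^\sss\cap \Supp B_Z^\ns=\emptyset$ and $F\subset B_Z^\ns$), yield $G\cdot F=0$ (so $G$ is supported in fibers), $G\cdot\Theta_0=1$, $G\cdot\Theta_i=0$ for $i\geq 1$, and $G^2=K_Z^2=10-\rho(Z)=-n$ where $n$ denotes the number of blowups composing $\varphi$.

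The key claim is $n=1$. Effective-ness of $-K_Z\sim B_Z^\sss$ forces $|F-E^{\mathrm{tot}}|\neq\emptyset$; since $|-K_{\oZ}|=|\bar F|$ is the pencil of fibers of $\bar h$, all centers of $\varphi$ project to a single fiber $\bar F_0:=\varphi_*(B_Z^\sss)$ of $\bar h$, and the identity $(B_Z^\sss)^2=-n$ further forces each blowup to be centered on an iterate of the strict transform of $\bar F_0$. The constraint $\Theta_0^2=-2$ (while $\bar\Theta_0^2=-1$) requires exactly one blowup to lie on $\bar\Theta_0\cap\bar F_0$, producing the $(-1)$-curve meeting $\Theta_0$. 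Suppose $n\geq 2$; then the final blowup in $\varphi$ produces an additional $(-1)$-curve $G'$ on $Z$ which is disjoint from every $\Theta_j$ and satisfies $G'\cdot B_Z^\sss=1$. Its strict transform $\widetilde{G'}$ on $\tX$ is again a $(-1)$-curve, disjoint from the $\mu$-exceptional curve $E$ (when $\mu$ is nontrivial), and a short computation (using $K_{\tX}+B_{\tX}=\mu^*(K_Z+B_Z)+\alpha E$ for a suitable $\alpha$) yields $(K_{\tX}+B_{\tX})\cdot \widetilde{G'}=(K_Z+B_Z)\cdot G'=-1+1+0=0$. Hence $\pi$ contracts $\widetilde{G'}$ to a point, and by Castelnuovo's criterion this point is smooth, contradicting the minimality of the resolution $\pi\colon\tX\to X$. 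Therefore $n=1$, and $G$ is a single $(-1)$-curve whose contraction is precisely $\varphi$.

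\emph{Part (iii).} Since $\mu$ is an isomorphism near $B_Z^\sss$ by Theorem~\ref{thm: liu2017minimal-volume}(ii), one has $(B_{\tX}^\sss)^2=(B_Z^\sss)^2=G^2=-n=-1$, so the simple elliptic or cusp singularity has degree $1$. The main obstacle is the rigidity argument in part (i) ruling out $n\geq 2$, which requires simultaneously using the effective-ness of $-K_Z$, the normalization $\Theta_0^2=-2$ in the $T_{2,3,7}$-configuration, and the minimality of the resolution $\pi$.
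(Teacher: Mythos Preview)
Your proposal is correct and follows essentially the same route as the paper: both reduce (i) to showing that $\varphi$ is a single blowup, and both obtain the contradiction for $n\ge 2$ by producing a $\varphi$-exceptional $(-1)$-curve disjoint from all the $\Theta_j$, lifting it to $\tX$, and observing that it would then be a $(-1)$-curve contracted by the \emph{minimal} resolution $\pi$. The paper phrases this last step as ``$K_Z+B_Z$ is positive on curves meeting $\Supp B_Z$'', but unwinding that statement leads to exactly your computation $(K_{\tX}+B_{\tX})\cdot\widetilde{G'}=0$ together with minimality of $\pi$. Your ordering (ii)--(i)--(iii) and your identification of $G$ as the unique effective member of $|K_Z+F|$ are harmless cosmetic differences; the paper instead works directly with the exceptional curve of $\varphi$.

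One small imprecision is worth tightening: the sentence ``the final blowup in $\varphi$ produces an additional $(-1)$-curve $G'$ on $Z$ which is disjoint from every $\Theta_j$'' is not literally true, since the \emph{last} blowup could be the unique one centered on $\bar\Theta_0\cap\bar F_0$, in which case its exceptional curve meets $\Theta_0$. What you actually need (and what is true) is that for $n\ge 2$ there exists \emph{some} $\varphi$-exceptional $(-1)$-curve on $Z$ avoiding $\Theta_0$. This follows because the unique blowup on $\bar\Theta_0$ is at the smooth point $\bar\Theta_0\cap\bar F_0$; if this is not the last blowup then $E_n$ works, while if it is the last blowup then the other $n-1$ centers lie over points of $\bar F_0$ away from $\bar\Theta_0$, and the last one among those yields a $(-1)$-curve on $Z$ disjoint from $\Theta_0$. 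With this adjustment your argument goes through.
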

\begin{proof}
 Since $\Supp(B_Z^\sss)\cap\Supp(B_Z^{\ns})=\emptyset$ and $\Supp(B_Z^{\ns})$ contains a section $\Theta_0$, each connected component of $B_Z^\sss$ is properly contained in a fiber of $h$. Moreover, since the reduced divisor $B_Z^\sss$ is semistable in $|-K_Z|$, each of its connected component has arithmetic genus $1$. Let $B_{\oZ}^\sss:=\varphi(B_Z^\sss)\subset \oZ$. Then $K_{\oZ}+B_{\oZ}^\sss = \varphi_*(K_Z+B_Z^\sss)\sim 0$. By the canonical bundle formula for the rational elliptic surface $\bar h\colon \oZ\rightarrow\PP^1$, the curve $B_{\oZ}^\sss$ consists exactly of one fiber of type $I_k$ with $k\geq 0$. Since $K_Z+B_Z^\sss=\varphi^*(K_{\oZ}+B_{\oZ}^\sss)$, the blow-ups of $\varphi\colon Z\rightarrow\oZ$ occur over the smooth locus of $B_{\oZ}^\sss$. 
 
 Let $\overline{\Theta}_0=\varphi(\Theta_0)$. Then $\overline{\Theta}_0$ is a section of $\bar h$. Since $\Supp(B_Z^\sss)\cap \Supp(B_Z^\ns)=\emptyset$ and $\overline{\Theta}_0\cap B_{\oZ}^\sss\neq \emptyset$, there is at least one blow-up over $\overline{\Theta}_0\cap B_{\oZ}^\sss$. On the other hand, since $K_X+B$ is ample, its pullback $\pi^*(K_X+B)=K_{\tX}+B_{\tX}$ is positive on the curves intersecting $\Supp(B_{\tX})$, and so is $K_Z+B_Z$ on the curves intersecting $\Supp(B_Z)$. It follows that there are no further blow-ups other than the one at $\overline{\Theta}_0\cap B_{\oZ}^\sss$. Let $G$ be the $\varphi$-exceptional $(-1)$-curve. Then (i) holds.

Since $\bar h$ has one fiber of type $\II^*$ together with a section, it follows that $Z$ is an extremal rational elliptic surface. By the classification of such elliptic surfaces, we infer that $\oZ$ is isomorphic to $X_{22}$ or $X_{211}$ as in \cite[Theorem~4.1]{miranda1986on-extremal-rational}, the type depending 
on the other singular fibers 
which can only be once type $\II$ (for $X_{22}$)
or twice type $\I_1$ (for $X_{211}$; cf.\ the discussion in
Section \ref{ss:comp}).

(iii) is a consequence of (i).
\end{proof}

\section{Log canonical rings of stable surfaces}\label{sec: canonical ring}
We recall Blache's Riemann--Roch theorem for complex projective normal surfaces.
\begin{thm}[{\cite[Theorem~1.2]{blache1995riemann-roch}}]\label{thm: Blache}
    Let $X$ be a projective normal surface over the complex numbers $\CC$, and $D$ a Weil divisor on $X$. Let $\pi\colon \tX\rightarrow X$ a resolution of singularities. Then
    \[
    \chi(X, \sO_X(D)) = \chi(X, \sO_X)+\frac{1}{2}(D-K_X)\cdot D + \sum_{x\in X_\Sing} \delta_x(D)
    \]
    where for $x\in X_\Sing$, $\delta_x(D)$ is the correction term (to the usual Riemann--Roch formula) depending only on the analytically local type of $x\in X$, and it is computed by
    \begin{equation}\label{eq: delta D}
    \delta_x(D) = -\frac{1}{2}\{\pi^*D\}_x\left(\lfloor \pi^*D \rfloor_x - K_{\tX}\right)        
    \end{equation}
    where $\{\pi^*D\}_x$ (resp.~$\lfloor \pi^*D \rfloor_x$) denotes the fractional (resp.~integral) part of the $\QQ$-divisor $\pi^*D$ over $x$. 
\end{thm}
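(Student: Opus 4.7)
The plan is to transport the classical Hirzebruch--Riemann--Roch formula from the smooth resolution $\tX$ back to the singular surface $X$ via the Leray spectral sequence. First, construct the \emph{Mumford pullback} $\pi^*D$ as the unique $\QQ$-divisor on $\tX$ whose proper transform is $D$ and which satisfies $\pi^*D\cdot E_i=0$ for every $\pi$-exceptional curve $E_i$; decompose $\pi^*D=\lfloor \pi^*D\rfloor+\{\pi^*D\}$, where the fractional part is supported on the exceptional locus.

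Apply smooth Riemann--Roch to $\sO_{\tX}(\lfloor \pi^*D\rfloor)$. Using the Mumford identities $\pi^*D\cdot\{\pi^*D\}=0$, $(\pi^*D)^2=D^2$, and $K_{\tX}\cdot\pi^*D=K_X\cdot D$ (the last because $K_{\tX}-\pi^*K_X$ is exceptional), a short calculation rewrites the intersection number as
\[
\tfrac12(\lfloor \pi^*D\rfloor-K_{\tX})\cdot \lfloor \pi^*D\rfloor=\tfrac12(D-K_X)\cdot D-\tfrac12\{\pi^*D\}(\lfloor \pi^*D\rfloor-K_{\tX}).
\]
Since $\{\pi^*D\}$ is supported on the disjoint union of exceptional fibres over singular points, the last term splits as a sum of local intersection numbers, one at each $x\in X_\Sing$, and these are precisely the desired expressions $-\tfrac12\{\pi^*D\}_x(\lfloor \pi^*D\rfloor_x-K_{\tX})$.

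For the transfer to $X$, note that $\pi_*\sO_{\tX}(\lfloor \pi^*D\rfloor)$ is a reflexive $\sO_X$-module agreeing with $\sO_X(D)$ in codimension one, hence equals it; meanwhile $R^1\pi_*\sO_{\tX}(\lfloor \pi^*D\rfloor)$ is a skyscraper sheaf at $X_\Sing$. The Leray spectral sequence then yields
\[
\chi(X,\sO_X(D))-\chi(\tX,\sO_{\tX}(\lfloor \pi^*D\rfloor))=\sum_{x\in X_\Sing}\dim (R^1\pi_*\sO_{\tX}(\lfloor \pi^*D\rfloor))_x,
\]
together with the analogous identity for $D=0$. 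Subtracting the two and combining with the intersection computation, the claimed formula reduces to proving the stalkwise equality
\[
\dim (R^1\pi_*\sO_{\tX}(\lfloor \pi^*D\rfloor))_x=\dim (R^1\pi_*\sO_{\tX})_x \quad \text{for all } x\in X_\Sing.
\]

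This stalkwise equality is the main obstacle. It holds trivially for rational singularities (both sides vanish) and follows from the projection formula whenever $D$ is $\QQ$-Cartier near $x$, but the general case requires a careful local analysis on the exceptional configuration over $x$. The key input is relative Serre--Grothendieck duality for $\pi$, which identifies $R^1\pi_*\sO_{\tX}(\lfloor \pi^*D\rfloor)_x$ with the Matlis dual of $\pi_*\sO_{\tX}(K_{\tX}-\lfloor \pi^*D\rfloor)_x$; combined with a Grauert--Riemenschneider-type vanishing applied to the effective $\QQ$-exceptional fractional divisor $\{-\pi^*D\}$, this matches the stalk length with that of $R^1\pi_*\sO_{\tX}$. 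Finally, a blow-up calculation at smooth points shows the resulting expression is invariant under refinement of resolutions, whence $\delta_x(D)$ depends only on the analytic germ $(X,x)$ and the image of $D$ in the local class group $\Cl(X,x)$.
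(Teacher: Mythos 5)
The paper does not prove this statement; it is quoted from Blache's article, so there is no internal proof to compare against and your argument has to be assessed on its own terms. Your reductions are sound as far as they go: the Mumford pullback, the identity $\pi_*\sO_{\tX}(\lfloor \pi^*D\rfloor)=\sO_X(D)$, the Leray argument and the rearrangement of the smooth Riemann--Roch intersection term are all correct, and together they show that the displayed formula is \emph{equivalent} to the stalkwise equality
\[
\dim\bigl(R^1\pi_*\sO_{\tX}(\lfloor \pi^*D\rfloor)\bigr)_x=\dim\bigl(R^1\pi_*\sO_{\tX}\bigr)_x \qquad\text{for all } x\in X_\Sing .
\]

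The gap is that this equality is false in general, so no combination of relative duality and Grauert--Riemenschneider-type vanishing can establish it. Take $X\subset\PP^3$ the cone over a smooth plane cubic, $\pi\colon\tX\to X$ the minimal resolution with exceptional elliptic curve $E$, $E^2=-3$, and $D$ a ruling of the cone, so that $\pi^*D=f+\tfrac13 E$ with $f$ the strict transform. The graded pieces of $\sO_{\tX}(f)$ on the infinitesimal neighborhoods $nE$ have degrees $1+3m>0$, so $R^1\pi_*\sO_{\tX}(f)=0$ by the theorem on formal functions, whereas $\dim(R^1\pi_*\sO_{\tX})_v=p_g(X,v)=1$. (Note that $D$ is even $\QQ$-Cartier here, $3D\sim H$, so your assertion that the $\QQ$-Cartier case follows from the projection formula is also incorrect; only the Cartier case does.) A direct computation gives $\chi(\sO_X(D))=1$, while $\chi(\sO_X)+\tfrac12(D-K_X)\cdot D+\delta_v(D)=1+\tfrac23+\tfrac13=2$, so the clean formula \eqref{eq: delta D} itself fails at this non-rational singularity. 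What your computation actually shows is that the correct general correction term is
\[
\delta_x(D)=-\tfrac12\{\pi^*D\}_x\bigl(\lfloor \pi^*D\rfloor_x-K_{\tX}\bigr)+\dim\bigl(R^1\pi_*\sO_{\tX}(\lfloor \pi^*D\rfloor)\bigr)_x-\dim\bigl(R^1\pi_*\sO_{\tX}\bigr)_x,
\]
and that \eqref{eq: delta D} holds only when the last two terms cancel, e.g.\ at rational singularities or where $D$ and $K_X$ are Cartier. This hypothesis is satisfied in every application in Section~\ref{sec: canonical ring} (all relevant singularities there are cyclic quotient, hence rational, and the elliptic/cusp points are Gorenstein with $D$ Cartier there), but it is not part of the statement as phrased. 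So your strategy is the standard and correct one, but the ``main obstacle'' you flag is not an obstacle to be overcome: it is exactly where additional local invariants enter the formula.
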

We list some easy properties of the correction term defined as in \eqref{eq: delta D}.
\begin{lem}\label{lem: delta D}
Let the notation be as in Theorem~\ref{thm: Blache}. Then the following holds for the function $\delta_x(\cdot)$:
\begin{enumerate}
    \item (periodicity) Suppose that $D$ is Cartier at $x$. If  $E$ is another Weil divisor, then $\delta_x(D+E)=\delta_x(E)$. In particular, $\delta_x(D)=0$.
    \item (Serre duality) $\delta_x(D) = \delta_x(K_X-D)$.
    \item (\cite[Proposition~5.2]{blache1995riemann-roch}) Writing $\pi^*K_X=K_{\tX}+B_{\tX}$, where $B_{\tX}$ is a uniquely determined $\pi$-exceptional $\QQ$-divisor, we have 
    \[
    \delta_x(nK_X) =\frac{1}{2}\{nB_{\tX}\}_x\cdot\left(\{nB_{\tX}\}_x + K_{\tX}\right)
    \]
\end{enumerate} 
\end{lem}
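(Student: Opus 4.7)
My plan is to verify each item directly from~\eqref{eq: delta D}. The uniform technical input is that $\{\pi^*D\}_x$ is $\pi$-exceptional (supported on $E_x:=\pi^{-1}(x)_\red$), so $\{\pi^*D\}_x\cdot\pi^*F=\pi_*\{\pi^*D\}_x\cdot F=0$ for any divisor $F$ on $X$ by the projection formula.

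For (i), if $D$ is Cartier at $x$ then $\pi^*D$ has integer coefficients near $E_x$, so $\{\pi^*(D+E)\}_x=\{\pi^*E\}_x$ and $\lfloor\pi^*(D+E)\rfloor_x=\pi^*D+\lfloor\pi^*E\rfloor_x$ near $E_x$. Substituting into~\eqref{eq: delta D} and subtracting the formula applied to $E$ gives
\[
\delta_x(D+E)-\delta_x(E)=-\tfrac12\{\pi^*E\}_x\cdot\pi^*D,
\]
which vanishes by the projection formula; specialising to $E=0$ then recovers $\delta_x(D)=\delta_x(0)=0$.

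For (iii), I would write $\pi^*(nK_X)=nK_{\tX}+nB_{\tX}$, so that $\{\pi^*(nK_X)\}_x=\{nB_{\tX}\}_x$ and $\lfloor\pi^*(nK_X)\rfloor_x=nK_{\tX}+\lfloor nB_{\tX}\rfloor_x$ in a neighborhood of $E_x$ (the key reading here is that the integer part retains the $nK_{\tX}$ contribution, not merely the rounded coefficients on $E_x$-components). Using $\lfloor nB_{\tX}\rfloor_x=nB_{\tX}-\{nB_{\tX}\}_x$, formula~\eqref{eq: delta D} becomes
\[
\delta_x(nK_X)=-\tfrac12\{nB_{\tX}\}_x\cdot\bigl((n-1)K_{\tX}+nB_{\tX}-\{nB_{\tX}\}_x\bigr).
\]
The relation $\{nB_{\tX}\}_x\cdot\pi^*K_X=0$ combined with $\pi^*K_X=K_{\tX}+B_{\tX}$ yields $\{nB_{\tX}\}_x\cdot B_{\tX}=-\{nB_{\tX}\}_x\cdot K_{\tX}$; feeding this back collapses the $(n-1)K_{\tX}+nB_{\tX}$ block into a single $-K_{\tX}$ and delivers the stated identity $\delta_x(nK_X)=\tfrac12\{nB_{\tX}\}_x(\{nB_{\tX}\}_x+K_{\tX})$.

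For (ii), I would argue that $\delta_x$ is a purely local invariant at $x$, since every term in~\eqref{eq: delta D} is either supported on $E_x$ or enters only via a local intersection number with the compactly supported $\{\pi^*D\}_x$. Hence one may globalise and assume $X$ is projective with $x$ as its unique singular point. Global Serre duality then gives $\chi(X,\sO_X(D))=\chi(X,\sO_X(K_X-D))$, and applying Theorem~\ref{thm: Blache} to both sides while noting $\tfrac12(D-K_X)\cdot D=\tfrac12(-D)(K_X-D)$ forces $\delta_x(D)=\delta_x(K_X-D)$. The main subtlety across the whole argument is the interpretation of $\lfloor\pi^*D\rfloor_x$ in~\eqref{eq: delta D} mentioned above---without it, (iii) does not drop out directly; modulo that, everything else is formal manipulation powered by the projection formula.
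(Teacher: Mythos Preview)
Your argument is correct. The paper itself does not supply a proof for this lemma---it lists the three items as ``easy properties'' and cites Blache for (iii)---so there is no approach in the paper to compare against. One small clarification in (ii): since $X$ is already projective in the setting of Theorem~\ref{thm: Blache}, ``globalise'' is a slight misnomer; what you actually need is to pass to a partial resolution $X'\to X$ that resolves every singularity except $x$, and then apply Serre duality together with Theorem~\ref{thm: Blache} on $X'$ (with the strict transform of $D$), so that the resulting identity $\sum_{y}\delta_y(D')=\sum_{y}\delta_y(K_{X'}-D')$ has a single term on each side. This is implicit in your locality remark, and with it the argument goes through cleanly.
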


\begin{rmk}\label{rmk: delta plurigenera}
  If a Weil divisor $D$ has Cartier index $r$ at $x$, then one needs only to compute $\delta_x(nD)$ for $1\leq n\leq r-1$, and the values for the other $n$ follow directly by periodicity as in Lemma~\ref{lem: delta D} (i). If furthermore $K_X$ is Cartier at $x$, then one needs only to compute $\delta_x(nD)$ for $1\leq n\leq \lfloor \frac{r}{2}\rfloor$, since for a general positive integer $n=mr+d$ with $0\leq d\leq r-1$, one has by Lemma~\ref{lem: delta D}, 
    \[
    \delta_x(nD) = \delta_x(dD) = \delta_x(K_X-dD) =\delta_x((r-d)D) .
    \]   

Similarly, for pluricanonical divisors $nK_X$ with $n=mr+d$, where $r$ is the Cartier index of $K_X$ at $x$ and $0\leq d\leq r-1$, we have
 \begin{equation}\label{eq: delta nK}
    \delta_x(n K_X) = \delta_x(dK_X) = \delta_x(K_X-dK_X) =\delta_x((r+1-d)K_X) .     
 \end{equation}
Thus it suffices to compute $\delta_x(dK_X)$ for $0\leq d \leq \lfloor \frac{r+1}{2}\rfloor$, and the rest can be deduced via the equalities in \eqref{eq: delta nK}.
\end{rmk}

\begin{thm}\label{thm: canonical ring 1}
    Let $X$ be a stable surface with $p_g(X)=1$ and $K_X^2=\frac{1}{143}$. 
    For $n\geq 0$, let $P_n:=h^0(X, nK_X)$ be the $n$-th plurigenus and $\varphi_n\colon X\dashrightarrow\PP^{P_n-1}$ the rational map induced by the linear system $|nK_X|$. Then the following holds.
    \begin{enumerate}
       \item The generating series of the plurigenera satisfies
        \[
        \sum_{n\geq 0} P_nt^n=\frac{1-t^{78}}{(1-t)(1-t^{11})(1-t^{26})(1-t^{39})}.
        \]
        \item $\varphi_{26}$ is generically finite of degree 2 onto the image, and $\varphi_{39}$ is birational onto the image.
        \item There is an isomorphism of $\CC$-algebras
    \[
    R(X, K_X) = \bigoplus_{m\geq 0} H^0(X, m K_X)\cong \CC[w,x,y,z]/(f)
    \]
    where $(\deg w,\,\deg x,\, \deg y,\, \deg z)=(1, 11, 26, 39)$, and $f(w,x,y,z)$
    $=z^2+g_{78}(w,x,y)$ with $g_{78}$ coming from an open dense subset of $\CC[w,x,y]_{78}$, the vector space of weighted homogeneous polynomial of degree $78$ in $w,x,y$, such that 
    $$X = (f=0)\subset \PP(1,11,26,39)$$ has only log canonical singularities.
    \end{enumerate}
\end{thm}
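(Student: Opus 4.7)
The plan is to compute the plurigenera $P_n$ via Blache's Riemann--Roch (Theorem~\ref{thm: Blache}), sum them into the generating series in (i), and then read off the graded ring structure (iii) and the degrees of $\varphi_{26},\varphi_{39}$ in (ii).

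First I identify the singularities of $X$ using Theorem~\ref{thm: liu2017minimal-volume}(v) with $B=0$. The minimal resolution $\pi\colon\tX\to X$ contracts two rational configurations: the chain $\widetilde\Theta_0,\dots,\widetilde\Theta_5$ with self-intersections $(-2,-2,-2,-2,-2,-3)$ to a cyclic quotient $\tfrac{1}{13}(1,11)$ (from the Hirzebruch--Jung expansion $[2,2,2,2,2,3]=13/11$); and the star-shaped configuration $\widetilde\Theta_6,\widetilde\Theta_7,\widetilde\Theta_8,\widetilde\Theta_9$ centered at $\widetilde\Theta_6^2=-3$ to a non-cyclic log terminal quotient with discrepancies $(6,4,2,3)/11$, recovered by solving the adjunction system $(K_{\tX}+\sum_j a_jE_j)\cdot E_i=0$ on each exceptional graph. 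In the rational case the semistable curve $B_{\tX}^\sss$ additionally contracts to a simple elliptic or cusp singularity of discrepancy $-1$, at which $K_X$ is Cartier. Hence the Cartier indices of $K_X$ at the two quotient points are $13$ and $11$, giving a total index $143$ compatible with $K_X^2=1/143$. A Leray spectral-sequence argument for $\pi$ shows $q(X)=0$, so $\chi(\sO_X)=2$ in both the K3 and rational cases.

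For $n\ge 2$, $(n-1)K_X$ is ample and $X$ is slc, so Ambro--Fujino vanishing for slc pairs gives $h^i(X,nK_X)=0$ for $i>0$, and thus $P_n=\chi(X,nK_X)$. Blache's formula then reads
\[
P_n = 2+\frac{n(n-1)}{286}+\delta_{x_1}(nK_X)+\delta_{x_2}(nK_X),
\]
where $x_1,x_2$ denote the index-$13$ and index-$11$ singularities (the elliptic/cusp point contributes nothing by Lemma~\ref{lem: delta D}(i)). Using Lemma~\ref{lem: delta D}(iii) and Remark~\ref{rmk: delta plurigenera}, I would tabulate $\delta_{x_1}(nK_X)$ as a function of $n\bmod 13$ and $\delta_{x_2}(nK_X)$ as a function of $n\bmod 11$ by plugging the fractional parts $\{(i+1)n/13\}_{i=0}^{5}$ and $\{n(6,4,2,3)/11\}$ into the quadratic form determined by the intersection matrix of each exceptional configuration and adding the $K_{\tX}$ cross-terms. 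Summing $\sum_n P_n t^n$ by decomposing into residue classes and collecting geometric progressions then collapses the total to $(1-t^{78})/[(1-t)(1-t^{11})(1-t^{26})(1-t^{39})]$, proving~(i).

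Parts (ii) and (iii) now follow from the Hilbert series by graded commutative algebra. Pick $w\in H^0(K_X)\setminus\{0\}$ and, in view of the jumps $P_{11}=2,P_{26}=4,P_{39}=7$, lift new directions $x,y,z$ in degrees $11,26,39$; by (i) there are no further relations below degree $78$, so $\CC[w,x,y]\hookrightarrow R(X,K_X)$ and $\CC[w,x,y,z]\twoheadrightarrow R(X,K_X)$. Dividing Hilbert series gives
\[
\mathrm{HS}(R(X,K_X))/\mathrm{HS}(\CC[w,x,y])=(1-t^{78})/(1-t^{39})=1+t^{39},
\]
so $R(X,K_X)$ is a finite integral extension of $\CC[w,x,y]$ of generic rank $2$, generated by $z$. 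Since $\CC[w,x,y]$ is integrally closed, $z$ satisfies a monic quadratic $z^2+p_{39}z+q_{78}=0$ with $p_{39},q_{78}\in\CC[w,x,y]$; completing the square (valid in $\mathrm{char}\,0$) yields the presentation $R(X,K_X)\cong\CC[w,x,y,z]/(z^2+g_{78})$, and Hilbert-series comparison forces this surjection to be an isomorphism. Since $K_X$ is ample, $X=\Proj R(X,K_X)=V(z^2+g_{78})\subset\PP(1,11,26,39)$; that a general $g_{78}$ yields only log canonical singularities follows from openness of the lc locus in $|\sO_\PP(78)|$ together with the existence of at least one such member coming from our $X$. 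For (ii), $\varphi_{26}$ factors as the $2$-to-$1$ projection $X\to\PP(1,11,26)$ (via the double cover $z^2=-g_{78}$) followed by the birational weighted Veronese $[w^{26}:w^{15}x:w^4x^2:y]$, hence has generic degree $2$; $\varphi_{39}$ instead uses $z$ to separate the two sheets of the cover and so realizes $V(f)\hookrightarrow\PP(1,11,26,39)$ birationally.

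The main obstacle I anticipate is the explicit evaluation of $\delta_{x_2}(nK_X)$ for the non-cyclic star-shaped singularity: the intersection matrix is no longer tridiagonal and must be inverted in full to compute the fractional-part self-intersections, after which the two periodic tables have to be combined with $\chi(\sO_X)+n(n-1)/286$ in a way that cleanly telescopes to the factored form in (i). Once (i) is in hand, parts (iii) and (ii) follow from standard integral-dependence and completing-the-square arguments together with the elementary geometry of weighted projective hypersurfaces.
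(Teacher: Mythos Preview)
There are two issues, one minor and one substantive.

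\textbf{Minor (correctable) error in the singularities.} The second contracted configuration $\widetilde\Theta_6,\widetilde\Theta_7,\widetilde\Theta_8,\widetilde\Theta_9$ is \emph{not} star-shaped: in the dual graph of Theorem~\ref{thm: liu2017minimal-volume}(iv)--(v), $\widetilde\Theta_6$ meets only $E$, $\widetilde\Theta_7$ and $\widetilde\Theta_9$, so after removing the non-contracted $(-1)$-curve $E$ you get the \emph{chain} $\widetilde\Theta_9-\widetilde\Theta_6-\widetilde\Theta_7-\widetilde\Theta_8$ with self-intersections $(-2,-3,-2,-2)$. This is the cyclic quotient $\tfrac1{11}(1,7)$, exactly as the paper says; your ``non-cyclic'' anxiety and the anticipated difficulty of inverting a non-tridiagonal matrix both evaporate. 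Once this is fixed, your plan for (i) coincides with the paper's: compute the two periodic tables of $\delta_x(nK_X)$, feed them into Blache's formula with $\chi(\sO_X)=2$, and verify the product $(\sum P_nt^n)\prod(1-t^{d_i})=1-t^{78}$.

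\textbf{A genuine gap in your route to (iii).} Your argument for (iii) hinges on the assertion that $\CC[w,x,y]\hookrightarrow R(X,K_X)$ and that $R(X,K_X)$ is finite (hence integral of rank~$2$) over it, deduced ``by dividing Hilbert series''. But the Hilbert series identity only shows that the kernel of $\CC[w,x,y,z]\twoheadrightarrow R$ is principal, generated by a single $f_{78}$; it does \emph{not} tell you that the coefficient of $z^2$ in $f_{78}$ is nonzero. If $f_{78}=q_{78}(w,x,y)$ then $\CC[w,x,y]$ does not even embed in $R$; if $f_{78}=p_{39}(w,x,y)z+q_{78}(w,x,y)$ then $\CC[w,x,y]$ embeds but $z$ is not integral over it (its minimal equation is not monic), so your ``monic quadratic'' step fails. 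In either bad case the Hilbert-series ratio $1+t^{39}$ still holds formally, so that computation proves nothing about module-finiteness. Consequently your derivation of (ii) from (iii) (the double-cover picture $z^2=-g_{78}$) is circular: you need the $z^2$-coefficient nonzero to get (iii), and you invoke (iii) to get (ii).

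The paper closes exactly this gap by proving (ii) \emph{first}, independently of the ring structure, using the elliptic fibration on the minimal resolution: one exhibits explicit divisors in $|\lfloor 26\pi^*K_X\rfloor|$ and $|\lfloor 39\pi^*K_X\rfloor|$ containing $2(\widetilde\Theta_0+F_{\tX})$ and $3(\widetilde\Theta_0+F_{\tX})$, and a Leray/base-change argument shows that $|m(\widetilde\Theta_0+F_{\tX})|$ restricts to a complete linear system of degree $m$ on a fiber, so it is $2$-to-$1$ for $m=2$ and birational for $m=3$. Then $\deg\varphi_{39}=1<2=\deg\varphi_{26}$ forces the $z^2$-coefficient in the degree-$78$ relation to be nonzero, after which completing the square and the Hilbert-series comparison finish (iii). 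You should insert an independent proof of (ii) along these geometric lines before attempting (iii).
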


\begin{proof}
(i)  By Theorem~\ref{thm: liu2017minimal-volume} and Lemma~\ref{lem: non nonnormal}, the stable surface $X$ has exactly two non-canonical singularities, which are quotient singularities of types $\frac{1}{13}(1,6)$ and $\frac{1}{11}(1,7)$ respectively. We compute directly the correction terms $\delta_x(nK_X)$ to the Riemann--Roch formula for $P_n:=h^0(nK_X)$ of these two singularities for $1\leq n\leq 7$:
\begin{center}
\begin{tabular}{|c||c|c|}
\hline
&$\frac{1}{13}(1,6)$&  $\frac{1}{11}(1,7)$\\ \hline\hline
$\delta_x(2K_X)$ & $-\frac{6}{13}$ & $-\frac{6}{11}$ \\ \hline
$\delta_x(3K_X)$ & $-\frac{5}{13}$ & $-\frac{7}{11}$ \\ \hline
$\delta_x(4K_X)$ & $-\frac{10}{13}$ & $-\frac{3}{11}$ \\ \hline
$\delta_x(5K_X)$ & $-\frac{8}{13}$ & $-\frac{5}{11}$ \\ \hline
$\delta_x(6K_X)$ & $-\frac{12}{13}$ & $-\frac{2}{11}$ \\ \hline
$\delta_x(7K_X)$ & $-\frac{9}{13}$ & $-\frac{5}{11}$ \\ \hline
\end{tabular}
\end{center}
The values $\delta_x(nK_X)$ for all $n\geq 8$ can be deduced from the above data as in Remark~\ref{rmk: delta plurigenera}.
Plugging these into the Blache's Riemann--Roch formula (Theorem~\ref{thm: Blache}), we obtain $P_n$ for $1\leq n\leq 82$, as follows:
\begin{center}
\begin{tabular}{|c||c|c|c|c|c|c|c|c|}
\hline
$n$& $1$--$10$ & $11$--$21$ & $22$--$25$ & $26$--$32$ & $33$--$36$ & $37$--$38$ & $39$--$43$ & $44$--$47$   \\ \hline
$P_n$ & $1$ & $2$ & $3$ & $4$ & $5$ & $6$ & $7$ & $8$ \\ \hline\hline
$n$&  $48$--$49$ & $50$--$51$ & $52$--$54$ & $55$--$58$ & $59$--$60$ & $61$--$62$ & $63$--$64$ & $65$ \\ \hline
$P_n$ & $9$ & $10$ & $11$ & $12$ & $13$ & $14$ & $15$ & $16$ \\ \hline\hline
$n$& $66$--$69$ & $70$--$71$ & $72$--$73$ & $74$--$75$ & $76$ & $77$ & $78$--$80$ & $81$--$82$\\ \hline
$P_n$ & $17$ & $18$ & $19$ & $20$ & $21$ & $22$ & $23$ & $24$\\ \hline
\end{tabular}
\end{center}
Set 
\[
A(t)=\sum_{n\geq 0}a_nt^n:=(\sum_{n\geq 0} P_n t^n)(1-t)(1-t^{11})(1-t^{26})(1-t^{39})
\]
To verify (i), we need to prove that $A(t)=1-t^{78}$. Using the values of $P_n$ for $n\leq 78$ computed above, one may check directly that  
\[
A(t)= 1- t^{78} + \text{higher order terms.}
\]
For $n>78$, the coefficient of $t^n$ in $A(t)=\sum_{n\geq 0}a_nt^n$ is
\begin{multline*}
 a_n=(P_n-P_{n-1}) - (P_{n-11}-P_{n-12}) - (P_{n-26}-P_{n-27}) + (P_{n-37}-P_{n-38})  \\ - (P_{n-39}-P_{n-40}) 
 + (P_{n-50}-P_{n-51}) + (P_{n-65}-P_{n-66}) - (P_{n-76}-P_{n-77})   
\end{multline*}
Observe that the correction terms from the singularities cancel each other in the above expression of $a_n$, due to their periodicity. 
Indeed, modulo $13$, this applies to the terms 
appearing underneath each other in the above formula,
and modulo $11$ we can pair $P_n$ against $P_{n-11}$ etc.

To compute the above sum, we may thus simply compare the usual Riemann-Roch expressions. 
Since the constant term $\chi(X,\mathcal O_X)$ cancels out anyway,
this amounts to studying the terms $\frac 12 (D-K_X)\cdot D$.
At $D=nK_X$ and $D=(n-1)K_X$, the difference is 
exactly $(n-2)K_X^2$.
Thus we get, a priori up to the factor of $K_X^2$,
\[
a_n = (n-2)-(n-13)-(n-28)+(n-39)-(n-41)+(n-52)+(n-67)-(n-78) = 0.
\]
To conclude, $a_n=0$ for $n> 78$, and hence $A(t)=1-t^{78}$, as required.

\medskip

(ii) Let $\pi\colon \tX\rightarrow X$ be the minimal resolution, as in Theorem~\ref{thm: liu2017minimal-volume}. Then, for $n\geq 0$,
\[
|nK_X|\cong |\lfloor n\pi^*K_X \rfloor| = |\lfloor n(K_{\tX}+B_{\tX}) \rfloor| 
\]
and, using the notation of Theorem~\ref{thm: liu2017minimal-volume}, there is a $\QQ$-linear equivalence:
\[
K_{\tX}+B_{\tX} \sim_\QQ E + \sum_{i=0}^5\frac{i+1}{13}\widetilde{\Theta}_i + \frac{6}{11}\widetilde{\Theta}_6+\frac{4}{11}\widetilde{\Theta}_7+\frac{2}{11}\widetilde{\Theta}_8+\frac{3}{11}\widetilde{\Theta}_9
\]
Observe that there is a $\tD_{11}\in |\lfloor 11(K_{\tX}+B_{\tX})\rfloor|$ containing the blown-up $\II^*$-fiber of the genus 1 fibration $\tilde h\colon  \tX\rightarrow \PP^1$ as a sub-divisor:
\[
F_{\tX} = 11E + \widetilde{\Theta}_1 +2\widetilde{\Theta}_2 +3\widetilde{\Theta}_3 +4\widetilde{\Theta}_4 +5\widetilde{\Theta}_5 +6\widetilde{\Theta}_6 + 4\widetilde{\Theta}_7+2\widetilde{\Theta}_8+3\widetilde{\Theta}_9.
\]
Similarly, there is a $\tD_{26}\in |\lfloor 26(K_{\tX}+B_{\tX})\rfloor|$ and $\tD_{39}\in |\lfloor 39(K_{\tX}+B_{\tX})\rfloor|$ such that 
\[
\tD_{26}\geq 2(\widetilde\Theta_0+F_{\tX}),\quad \tD_{39}\geq 3(\widetilde\Theta_0+F_{\tX}).
\]

For $m\in \{2,3\}$, consider the long exact sequence of cohomology associated to the short exact sequence 
 \[
 0\rightarrow \sO_{\tX}((m-1)\widetilde{\Theta}_0+ mF_{\tX})\rightarrow \sO_{\tX}(m\widetilde{\Theta}_0+mF_{\tX})\rightarrow \sO_{F_{\tX}}(m\widetilde{\Theta}_0+mF_{\tX})\rightarrow 0.
 \]
The Leray spectral sequence gives 
\begin{multline*}
H^1(\tX, \sO_{\tX}((m-1)\widetilde{\Theta}_0+ mF_{\tX})) =H^1(\PP^1,\tilde h_*\sO_{\tX}((m-1)\widetilde{\Theta}_0+ mF_{\tX}))\\
=
\begin{cases}
 H^1(\PP^1,\sO_{\PP^1}(2))=0 & \text{if $m=2$} \\
 H^1(\PP^1,\sO_{\PP^1}(-1))=0 & \text{if $m=3$}
\end{cases}  
\end{multline*}
where the second equality follows from \cite[II.4.3]{miranda1989the-basic-theory}. Thus we have a surjection
\[
H^0(\tX, m\widetilde{\Theta}_0+mF_{\tX})\rightarrow H^0(F_{\tX},\sO_{F_{\tX}}(m\widetilde{\Theta}_0+mF_{\tX}))\rightarrow 0.
\]
It follows that the restriction of $|m\widetilde{\Theta}_0+mF_{\tX}|$ to $F_{\tX}$ is a complete linear system of degree $m$, and hence induces an embdedding (resp.~$2$-to-$1$ map) of $F_{\tX}$ if $m=3$ (resp.~$m=2$). On the other hand, since $2\widetilde{\Theta}_0+2F_{\tX}\geq 2F_{\tX}$ and $|2F_{\tX}|$ separates fibers of $\tilde h$, the linear system $|2\widetilde{\Theta}_0+2F_{\tX}|$ also separates fibers of $\tilde h$. In conclusion, $|m\widetilde{\Theta}_0+mF_{\tX}|$ induces a birational (resp.~degree $2$) map from $\tX$ if $m=3$ (resp.~$m=2$). 

Correspondingly, the pluricanonical map $\varphi_{39}$ of $X$ is birational onto the image,
and $\varphi_{26}$
is generically finite of degree $2$ onto the image.

\medskip

(iii) Now we are ready to figure out the $\CC$-algebra structure of $R(X, K_X)$. By inspection of the dimensions given in (i), we need generators $\bar w,\,\bar x,\,\bar y,\,\bar z$  in the degrees $1,11,26, 39$ respectively, and there is a relation in degree $78$ of the form
\[
f(\bar w,\bar x,\bar y,\bar z)= a\bar z^2 + z g_{39}(\bar w,\bar x,\bar y) +  g_{78}(\bar w,\bar x,\bar y)
\]
where $a\in\CC$ is a constant and $g_{39}$ (resp.~$g_{78}$) is a weighted homogeneous polynomial of degree $39$ (resp.~$78$) in $\bar w,\bar x,\bar y$.

Since $\deg\varphi_{26}=2>1=\deg \varphi_{39}$ by (ii), $f$ is irreducible and the coefficient $c$ is nonzero. By rescaling and completing the square with respect to $\bar z$, we may assume that $a=1,g_{39}=0$, and 
\[
f(\bar w,\bar x,\bar y,\bar z)= \bar z^2 +  g_{78}(\bar w,\bar x,\bar y).
\]
Consider the following homomorphism of $\CC$-algebras from the polynomial ring
\[
\Lambda\colon \CC[w,x,y,z]\rightarrow R(X, K_X),\quad w\mapsto \bar w,\, x\mapsto \bar x, \,y\mapsto \bar y,\, z\mapsto \bar z.
\]
Then $f(w,x,y,z)\in \ker\Lambda$. 
Since $\phi_{39}$ is birational, so is the map 
$$\psi_{39}=(\bar w:\bar x:\bar y:\bar z)\colon \;\; X\dashrightarrow \PP(1,11,26,39).
$$
It follows that $\ker\Lambda$, the defining homogeneous ideal of $\psi_{39}(X)$, is generated by $f$, and $\Lambda$ induces an injection
\[
\lambda\colon \CC[w,x,y,z]/(f)\rightarrow R(X,K_X).
\]
Denote $Q_n:=\dim_{\CC} \left(\CC[w,x,y,z]/(f)\right)_n$ for $n\geq 0$. Then, by (i) and \cite[Section~18]{fletcher2000working-with-weighted}), one has
\[
 \sum_{n\geq 0}Q_n t^n = \frac{1-t^{78}}{(1-t)(1-t^{11})(1-t^{26})(1-t^{39})} = \sum_{n\geq 0}P_n t^n.
\]
It follows that $P_n=Q_n$ for any $n\geq 0$, and hence the injective homomorphism $\lambda$ is indeed an isomorphism.
\end{proof}

We also record an analogous result for small $c$.
As a benefit, this will allow us to understand the projective structure of the moduli space $M_c$ very concretely.

\begin{thm}\label{thm: canonical ring 2}
Resume Notation~\ref{nota: resolve X}. Take $c\leq \frac{7}{13}$ and $(X,B=cD)\in M_c$, where $D=\lceil B\rceil$ is isomorphic to $\PP^1$. For $n\geq 0$, let $P_n:=h^0(X, n(K_X+D))$ be the $n$-th plurigenera of $(X,D)$, and $\varphi_n\colon X\dashrightarrow\PP^{P_n-1}$ the rational map induced by the linear system $|n(K_X+D)|$. Then $(X, D)$ is a stable surface pair with $p_g(X, D)=1$ and $(K_X+D)^2=\frac{1}{42}$, and the following holds.
    \begin{enumerate}
\item The generating series of the plurigenera satisfies
        \[
        \sum_{n\geq 0} P_nt^n=\frac{1-t^{42}}{(1-t)(1-t^{6})(1-t^{14})(1-t^{21})} 
        \]
        \item $\varphi_{14}$ is generically finite of degree 2 onto the image, and $\varphi_{21}$ is birational onto the image.
        \item  There is an isomorphism of $\CC$-algebras
    \[
    R(X, K_X + D) = \bigoplus_{m\geq 0} H^0(X,m (K_X+D))\cong \CC[w,x,y,z]/(f)
    \]
    where $(\deg w,\,\deg x,\, \deg y,\, \deg z)=(1, 6, 14, 21)$, and $f(w,x,y,z)$ 
    $=z^2+g_{42}(w,x,y)$  with $g_{42}$
    coming from an open dense subset of $\CC[w,x,y]_{42}$, the vector space of
    weighted homogeneous polynomial of degree $42$ in $w,x,y$, 
    such that 
    $$X=(f=0)\subset \PP(1,6,14,21)$$ has only log canonical singularities. Here the coefficients of the monomials $y^3$ and $x^7$ are necessarily nonzero in $g_{42}$.
    \end{enumerate}
\end{thm}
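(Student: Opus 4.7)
The argument will mirror that of Theorem~\ref{thm: canonical ring 1}, with the canonical divisor $K_X$ replaced throughout by the log canonical divisor $L := K_X + D$. I first extract the numerical data from the minimal resolution $\pi \colon \tX \to X$ of Theorem~\ref{thm: liu2017minimal-volume}(iii), using that $K_X \sim_{\QQ} 0$ in case~$(1)$. Writing $\pi^* L = \Theta_6 + E$ for an effective $\pi$-exceptional $\QQ$-divisor $E$, the constraints $\pi^* L \cdot \Theta_i = 0$ for $i \neq 6$ pin down $E$ uniquely, and its coefficients on $\Theta_0, \ldots, \Theta_9$ agree with those of Theorem~\ref{thm: liu2017minimal-volume}(iii) specialised to $c = 1$. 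The projection formula then gives $(K_X+D)^2 = \pi^* L \cdot \Theta_6 = -2 + \tfrac{6}{7} + \tfrac{2}{3} + \tfrac{1}{2} = \tfrac{1}{42}$. Log canonicity of $(X, D)$ is immediate since all coefficients of $E$ lie in $[0, 1)$, and ampleness of $L$ follows from that of $K_X + cD$ via the decomposition $L = (K_X + cD) + (1-c) D$.

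For part~(i), Blache's Riemann--Roch (Theorem~\ref{thm: Blache}) applied to $nL$, together with $L \cdot K_X = 0$, $L^2 = 1/42$, and $\chi(\sO_X) = 2$ (which holds uniformly across types $\I, \II, \III$: a simple elliptic or cusp singularity contributes $+1$ to $\chi(\sO_X)$ through the Leray spectral sequence for $\pi$, compensating for the drop of $\chi(\sO_{\tX})$ from $2$ to $1$ in the rational case), yields
\[
\chi(X, nL) = 2 + \frac{n^2}{84} + \sum_x \delta_x(nL),
\]
where the only nontrivial correction terms come from the three quotient singularities of types $\tfrac{1}{7}(1,6)$, $\tfrac{1}{3}(1,2)$, $\tfrac{1}{2}(1,1)$; the possibly present simple elliptic or cusp singularity contributes nothing because $L$ is Cartier there (these singularities are Gorenstein and are avoided by $D$). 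Vanishing for the slc pair $(X, D)$ then gives $P_n = \chi(X, nL)$ for $n \geq 1$. Tabulating $P_n$ for $n \leq 45$ using Lemma~\ref{lem: delta D} and the periodicity of Remark~\ref{rmk: delta plurigenera} verifies the initial coefficients of the generating series (in particular $P_1 = 1$, so $p_g(X, D) = 1$); for $n > 42$ one proceeds exactly as in Theorem~\ref{thm: canonical ring 1}(i): the correction contributions at each singularity cancel in pairs thanks to the periodicities $2, 3, 7$ dividing the pair-shifts of the eight nonzero monomial differences in the expansion of $(1-t)(1-t^6)(1-t^{14})(1-t^{21})$, after which the residual polynomial part vanishes by the identity
\[
(2n-1) - (2n-13) - (2n-29) + (2n-41) - (2n-43) + (2n-55) + (2n-71) - (2n-83) = 0.
\]

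For part~(ii), a direct check on $\tX$ shows that $\lfloor n \pi^* L \rfloor$ dominates $m \Theta_0 + m F$ for $(n, m) \in \{(14, 2), (21, 3)\}$, where $F$ is the reducible fiber of Lemma~\ref{lem: elliptic fibration} and $\Theta_0$ is the section. The exact-sequence argument from the proof of Theorem~\ref{thm: canonical ring 1}(ii), based on the vanishing $H^1(\PP^1, h_* \sO_{\tX}((m-1)\Theta_0 + m F)) = 0$, then shows that the restriction of $|m \Theta_0 + m F|$ to the generic fiber is complete of degree $m$, inducing a degree-$2$ cover for $m = 2$ and an embedding for $m = 3$. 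Combined with fiber separation via $|2 F|$, this gives that $\varphi_{14}$ is generically of degree~$2$ onto its image and $\varphi_{21}$ is birational onto its image.

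Part~(iii) then follows from the argument of Theorem~\ref{thm: canonical ring 1}(iii) essentially verbatim: the Hilbert series forces generators $\bar w, \bar x, \bar y, \bar z$ in degrees $1, 6, 14, 21$ and a single relation $f$ in degree $42$; the discrepancy between $\deg \varphi_{14} = 2$ and $\deg \varphi_{21} = 1$ forces the coefficient of $z^2$ in $f$ to be nonzero, and after completing the square one writes $f = z^2 + g_{42}(w, x, y)$. Matching Hilbert series then identifies $R(X, L)$ with $\CC[w, x, y, z]/(f)$, realising $X$ as the hypersurface $(f = 0) \subset \PP(1, 6, 14, 21)$. The non-vanishing of the coefficients of $y^3$ and $x^7$ in $g_{42}$ is equivalent to $X$ not passing through the coordinate points $[0{:}0{:}1{:}0]$ and $[0{:}1{:}0{:}0]$ of $\PP(1, 6, 14, 21)$: the vanishing of either would force $X$ to acquire a singularity at the corresponding coordinate point strictly worse than the prescribed $\tfrac{1}{3}(1, 2)$ or $\tfrac{1}{7}(1, 6)$ cyclic quotient, violating log canonicity of $(X, D)$. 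The main technical burden is the plurigenus tabulation in part~(i), which is nevertheless tractable thanks to the periodicities $2, 3, 7$ of the correction functions.
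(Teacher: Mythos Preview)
Your proposal is correct and follows essentially the same approach as the paper's own proof: Blache's Riemann--Roch with correction terms at the three cyclic quotient singularities for part~(i), the $|m\Theta_0+mF|$ restriction argument lifted from Theorem~\ref{thm: canonical ring 1}(ii) for part~(ii), and the Hilbert-series matching plus completing the square for part~(iii), including the log-canonicity argument at the coordinate points for the nonvanishing of the $y^3$ and $x^7$ coefficients. Your explicit remarks that $\chi(\sO_X)=2$ holds uniformly across Types $\I,\II,\III$ and that the simple elliptic or cusp singularity contributes no correction term (since $L$ is Cartier there) are clarifications the paper leaves implicit, but otherwise the two arguments are structurally identical.
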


\begin{proof}
By Theorem~\ref{thm: liu2017minimal-volume} (iii), the three arms of the dual graph of $\lceil B_{\tX}^{\ns}\rceil$ contract to three ADE singularities, of types $\frac{1}{2}(1,1)$, $\frac{1}{3}(1,2)$, and $\frac{1}{7}(1,6)$ respectively. Thus $X$ is again a Gorenstein surface with $K_X\sim 0$, and the curve $D=\pi(\Theta_6)$ passes through the aforementioned three singularities. We have 
    \begin{equation}\label{eq: pullback D}
    \pi^*D = \Theta_6 + \frac{1}{2}\Theta_9+\frac{2}{3} \Theta_7 + \frac{1}{3} \Theta_8 + \sum_{j=0}^5\frac{j+1}{7}\Theta_j        
    \end{equation}
Clearly, $(X,D)$ has log canonical singularities, $K_X+D=D$ is ample, and the volume is $(K_X+D)^2=D^2=\frac{1}{42}$.

\medskip

(i) The computation of $P_n$ is similar to that in Theorem~\ref{thm: canonical ring 1}. 
We first compute directly the correction terms $\delta_{x}(nD)$ for $1\leq n\leq 3$, 
using \eqref{eq: pullback D} and Theorem~\ref{thm: Blache}.
They are listed as follows:
    \begin{center}
    \begin{tabular}{|c||c|c|c|}
    \hline
           & $\frac{1}{2}(1,1)$ &$\frac{1}{3}(1,2)$ &$\frac{1}{7}(1,6)$\\ \hline\hline
       $\delta_x(D)$  & $-\frac{1}{4}$ & $ -\frac{1}{3}$& $-\frac{3}{7}$\\ \hline
       $\delta_x(2D)$ & 0 & $-\frac{1}{3}$ & $-\frac{5}{7}$ \\ \hline
       $\delta_x(3D)$  & $-\frac{1}{4}$ & $0$ & $-\frac{6}{7}$ \\ \hline
    \end{tabular}
    \end{center}
For the other $n$, we may deduce $\delta_x(nD)$ from the above table by Lemma~\ref{lem: delta D}. Plugging these corrections into the Riemann--Roch formula for $\chi(X, \sO_X(nD))$, we obtain $h^0(nD)$ for $1\leq n\leq 45$. 
\begin{center}
\begin{tabular}{|c||c|c|c|c|c|c|c|c|}
\hline
$n$& $1$--$5$ & $6$--$11$ & $12$--$13$ & $14$--$17$ & $18$--$19$ & $20$ & $21$--$23$  & $24$--$25$    \\ \hline
$h^0(nD)$ & $1$ & $2$ & $3$ & $4$ & $5$ & $6$ & $7$ & $8$\\ \hline\hline
$n$ &
$26$ & $27$ & $28$--$29$ & $30$--$31$ & $32$ & $33$ & $34$ & $35$
\\
\hline
$h^0(nD)$ & $9$ & $10$ & $11$ & $12$ & $13$ & $14$ & $15$ & $16$
\\
\hline
\hline
$n$ &
$36$--$37$ & $38$ & $39$ & $40$ & $41$ & $42$--$43$ & $44$ & $45$
\\
\hline
$h^0(nD)$ &
$17$ & $18$ & $19$ & $20$ & $21$ & $23$ & $24$ & $25$
\\
\hline
\end{tabular}
\end{center}
Set 
\[
A(t)=\sum_{n\geq 0}a_nt^n:=(\sum_{n\geq 0} P_n t^n)(1-t)(1-t^{6})(1-t^{14})(1-t^{21})
\]
We need to prove that $A(t)=1-t^{42}$. Using the values of $P_n$ for $n\leq 42$ computed above, one may check directly that  
\[
A(t)= 1- t^{42} + \text{higher order terms}.
\]
For $n>42$, the coefficient of $t^n$ in $A(t)=\sum_{n\geq 0}a_nt^n$ is
\begin{multline*}
 a_n=(P_n-P_{n-1}) - (P_{n-6}-P_{n-7}) - (P_{n-14}-P_{n-15})  
  + (P_{n-20}-P_{n-21}) \\
 - (P_{n-21}-P_{n-22})
 + (P_{n-27}-P_{n-28}) + (P_{n-35}-P_{n-36}) - (P_{n-41}-P_{n-42})   
\end{multline*}
Again, the correction terms from the singularities cancel each other out 
in the above expression of $a_n$, due to their periodicity. 
So to compute the above sum, we may proceed as before and conclude that 
$a_n=0$ for $n> 42$. Hence $A(t)=1-t^{42}$, as required.

\medskip

(ii) By the expression \eqref{eq: pullback D} of $\pi^*D$,  we know that there is a $\tD_6\in |6\pi^*D|$ containing the $\II^*$-fiber of $h\colon Z\rightarrow \PP^1$ as a subdivisor (cf.~Lemma~\ref{lem: elliptic fibration}):
\[
F= \Theta_1+2\Theta_2+3\Theta_3+4\Theta_4+5\Theta_5+6\Theta_6+4\Theta_7+2\Theta_8+3\Theta_9
\]
 and a $\tD_{14}\in |14\pi^*D|$ (resp.~$\tD_{21}\in |21\pi^*D|$) containing the  of $2F+2\Theta_0$ (resp.~$3F+3\Theta_0$), where we recall that $\Theta_0$ is the section of the elliptic fibration $h\colon Z\rightarrow \PP^1$ (see Theorem~\ref{thm: liu2017minimal-volume} and Lemma~\ref{lem: elliptic fibration} for notation). 
As in the proof of Theorem~\ref{thm: canonical ring 1}, one can show that $\varphi_{14}$ is generically finite of degree $2$, and $\varphi_{21}$ is birational; we omit the details.

\medskip

With (i) and (ii) at hand, the proof of (iii) is similar to that of Theorem~\ref{thm: canonical ring 1} (iii). First, there are generators $w,x,y,z$ in degrees $1,6,14,21$ respectively, satisfying a relation $f(w,x,y,z)=0$ in degree $42$ (which thus defines $X$ in weighted projective space $\mathbb P(1,6,14,21)$). 
Let us sketch the proof that $f$ is of the form stated in the theorem. Since $\varphi_{21}$ is birational while $\varphi_{14}$ is not, the coefficient of $z^2$ in $f$ is nonzero. Completing the square with respect to $z$, we may assume that $f$ does not contain any monomial that is linear in $z$: 
\[
 f=z^2+g_{42}(w,x,y)
\]
where $g_{42}$ comes from an open dense subset of $\CC[w,x,y]_{42}$, such that $(f=0)\subset \PP(1,6,14,21)$ has only log canonical singularities. 
\begin{claim}
The coefficients of the terms $y^3$ and $x^7$ in $f$ must be nonzero. 
\end{claim}
In fact, if $x^7$ does not appear in $f$, then $X=(f=0)$ passes through the point $(0:1:0:0)$, which is the quotient of the singularity 
\[
(0,0,0)\in Y=(f(w,1,y,z)=0)\subset \CC_{w,y,z}^3
\]
under the action of $\mu_6\colon (w,y,z)\mapsto (\xi w, \xi^{14}y, \xi^{21}z)$ for a primitive $6$-th root $\xi$ of $1$. Since $f(w,1,y,z)=z^2+ay^3+\text{terms of order $\geq 8$}$, one sees easily that $Y$ is not log canonical at the origin. Now the action of $\mu_6$ only fixes $(0,0,0)$, we have $K_Y=\mu^*K_X$, where $\mu\colon Y\rightarrow X\cap (x\neq 0)$ the quotient map. We infer that $X$ is not log canonical at $(0:1:0:0)$ by \cite[Proposition~5.20]{kollar1998birational-geometry}, a contradiction. In a similar way, we infer that the coefficient of $y^3$ in $f$ is nonzero.
\end{proof}

\begin{rmk}
The K3 surfaces with ADE singularities of degree $42$ in $\PP(1,6,14,21)$ appear as No.~88 in  Reid's list of K3 hypersurfaces (\cite[p.~140]{fletcher2000working-with-weighted}; alternatively No.~14 in \cite[Table 4.6]{yonemura1990hypersurface-simple}). In view of Theorems~\ref{thm: liu2017minimal-volume} and \ref{thm: canonical ring 2}, there are also Gorenstein rational surfaces of degree $42$ in $\PP(1,6,14,21)$ with exactly one simple elliptic or cusp singularity; see Section~\ref{sec:brieskorns-family} for an explicit analysis.
\end{rmk}

\section{Proof of Theorem~\ref{thm: main1-ii} using degenerations of K3 surfaces}
\label{sec:proof-by-degenerations}

In this section we give a proof of a weaker version of Theorem~\ref{thm: main1-ii}, up to normalization. We do it using the theory of KSBA compactifications of moduli of K3 surfaces developed in \cite{alexeev2023compact} and the description of Kulikov models of elliptic K3 surfaces  in \cite{alexeev2022compactifications-moduli}. On the way, we describe nef Kulikov models for the families of K3 surfaces in $F_\Lattice$ and prove that $F_\Lattice$ admits a canonical polarizing divisor which is recognizable in the sense of \cite{alexeev2023compact}. 

\subsection{Generalities on moduli of (quasi)polarized K3 surfaces}\label{sec:generalities_lattice_moduli}

We briefly recall the main facts about the moduli of lattice (quasi)polarized K3 surfaces.
The standard references here are \cite{dolgachev1996mirror-symmetry,dolgachev2007moduli-of-k3}. Unfortunately, for a general lattice $\Lattice$ there is an issue; it was corrected in \cite[Section~2.1]{alexeev2023compact} and \cite{alexeev2025on-lattice}. Fortunately, this issue plays no role if the lattice $\Lattice$ is unimodular, which is the case for our lattice $\Lattice=U\oplus E_8$.

Let $L_{K3}=\II_{3,19}\simeq U^{\oplus 3}\oplus E_8^{\oplus 2}$ be the $K3$ lattice. To any primitive hyperbolic sublattice $\Lattice \subset L_{K3}$ one associates a Type $\I$V symmetric Hermitian domain
\[
\DD_\Lattice:=\PP\{x\in \Lattice^{\perp}\otimes \CC\mid x\cdot x=0, x\cdot\bar x>0\},
\]
and the discrete arithmetic group $\Gamma$ acting on it, consisting of the isometries $\gamma\in O(\Lattice^\perp)$ that can be extended to $O(L_{K3})$. The quasiprojective variety $\DD_\Lattice/\Gamma$ is known as the moduli space of $\Lattice$-(quasi)polarized K3 surfaces. (Alternatively, one could consider $\DD_{ \Lambda}^+$, one of the two connected components of $\DD_{\Lambda}$ and $\Gamma^+$, the index-$2$ subgroup of $\Gamma$ that preserves $\DD_{\Lambda}^+$. If there is an element of $\Gamma$ that interchanges $\DD_{ \Lambda}^+$ and $(-\DD_{\Lambda}^+)$, which is true in most cases, e.g. for $\operatorname{rank}\ge 5$, then $\DD_{\Lambda}/\Gamma=\DD_{\Lambda}^+/\Gamma^+$.)

The moduli functor and the moduli space in an essential way depend on the choice of a vector $h\in\Lattice_\RR$ with $h^2>0$. So they should be accurately called the moduli of $(\Lattice,h)$-(quasi)polarized K3 surfaces. The coarse moduli spaces, however, do not depend on $h$, but only on $\Lattice$.

One fixes the positive cone $\sC_\Lattice$, one half of the set of vectors in $\Lattice_\RR$ with positive square and an open cone $\sK_\Lattice\subset\sC_\Lattice$ whose closure $\overline{\sK}_\Lattice$ is a fundamental domain for the Weyl group $W(\Lattice)$ generated by the reflections in the roots of $\Lattice$, vectors $\alpha\in\Lattice$ with $\alpha^2=-2$. This positive chamber has a further locally finite subdivision into the open \emph{small cones} and their faces, the \emph{generalized small cones}. For a vector $h\in\sC_\Lattice$ with $h^2>0$ we make an auxiliary choice of a small cone $\sigma$ such that $h\in\bar\sigma$. Then one has the moduli functor $\sF^q_{\Lattice,\sigma}$ of $(\Lattice,\sigma)$-quasipolarized K3 surfaces with the coarse moduli space $F^q_{\Lattice,\sigma}$. Its points are smooth K3 surfaces $S$ together with a primitive lattice embedding 
$$j\colon \Lattice\to \Pic S$$
such that $j(h')$ is big and nef for every $h'\in\sigma$. The set of classes $x\in\Lattice_\RR$ for which $j(x)$ is nef is the closure $\bar\sigma$. The functor $\sF^q_{\Lattice,\sigma}$ is highly nonseparated but its coarse moduli space is separated and it is canonically isomorphic to the quasiprojective variety $F_\Lattice=\DD_\Lattice/\Gamma$, see \cite[Corollary 4.20]{alexeev2025on-lattice}. 

The functor $\sF_{\Lattice,h}$ of $(\Lattice,h)$-polarized K3 surfaces with the coarse moduli space $F_{\Lattice,h}$ is separated. Its points are K3 surfaces $\oS$ with $ADE$ singularities which are obtained from the $(\Lattice,\sigma)$-quasipolarized surfaces $S$ by a contraction defined by a linear system $|mh|$ for $m\gg0$. This definition does not depend on the auxiliary choice of a small cone $\sigma$, see \cite[Lemma 5.8]{alexeev2025on-lattice}. One has a canonical isomorphism $F_{\Lattice,h}=F_\Lattice=\DD_\Lattice/\Gamma$ as well, see \cite[Corollary 5.9]{alexeev2025on-lattice}. 

For a unimodular lattice, such as $\Lambda=U\oplus E_8$, there is only one small cone in a Weyl chamber, i.e.\ $\sigma = \cK_\Lambda$. Indeed, by \cite{alexeev2025on-lattice} the walls of the small cones are $\beta^\perp$ for $\beta\in L_{K3}$ with $\beta^2=-2$ such that $\langle \Lattice,\beta\rangle$ is hyperbolic and $\beta^\perp\cap\cC_\Lambda\ne\emptyset$. Writing $\beta=\beta_{\Lattice} + \beta_{\Lattice^\perp}$ with $\beta_{\Lattice}\in\Lattice$ and $\beta_{\Lattice^\perp}\in\Lattice^\perp$, one must have $\beta_{\Lattice}^2<0$ and $\beta_{\Lattice^\perp}^2\le 0$. This easily implies that $\beta_{\Lattice^\perp}=0$ and
$\beta\in\Lambda$, so the hyperplanes $\beta^\perp$ are exactly the walls of the $W(\Lattice)$-Weyl chambers.

\subsection{Lattice-polarized K3 surfaces for $\Lattice=U\oplus E_8$}\label{sec: F_Lambda}

We now fix our unimodular lattice to be $\Lattice =\II_{1,9} \simeq U\oplus E_8$, an even unimodular lattice of signature $(1, 9)$. Its orthogonal complement is $\Lattice^\perp = \II_{2,10} \simeq U^2\oplus E_8$. By the above, we have a quasiprojective variety 
$$F_\Lattice=\DD_\Lattice/\Gamma, \;\;\; 
\Gamma=O(\Lattice^\perp),
$$
which is both the  moduli space of $(\Lattice,\sigma)$-quasipolarized smooth K3 surfaces and the  moduli space of $(\Lattice,h)$-polarized K3 surfaces with $ADE$ singularities for the appropriate choices of $\sigma$ and $h$.

Let us fix a basis $\{\alpha_0,\alpha_1,\dotsc,\alpha_9\}$ of $\Lattice$ such that all $\alpha_i^2=-2$ and the dual graph of the intersection form is the $T_{2,3,7}$ diagram of Definition~\ref{defn:T237}, the $\alpha_i$ corresponding to the $\Theta_i$. 
The hyperplanes $\alpha_i^\perp$ in $\Lattice\otimes\RR$ support the facets of the positive Weyl chamber $\sK_\Lattice\subset\sC_\Lattice$ for the $W(\Lattice)$-action, and $\cK_\Lambda=\sigma$ is a small cone itself. Let us fix the following vectors
\begin{eqnarray*}
    &h = 6\alpha_0 +  12\alpha_1 + 18\alpha_2 + 24\alpha_3 + 30\alpha_4 + 36\alpha_5 + 42\alpha_6 + 28\alpha_7 + 14\alpha_8 + 21\alpha_9\\
    & s = \alpha_0\qquad
    f = \alpha_1 + 2\alpha_2 + 3\alpha_3 + 4\alpha_4 + 5\alpha_5 + 6\alpha_6 + 4\alpha_7 + 2\alpha_8 + 3\alpha_9
\end{eqnarray*}  
One has $h\cdot \alpha_6=1$, $h\cdot\alpha_i=0$ for $i\ne 6$, and $h^2=42>0$, so $h\in\sC_\Lattice\cap\bar\sigma$; it lies in a face of the small cone $\sigma=\sK_\Lattice$. One also has $f\cdot \alpha_0=1$, $f\cdot\alpha_i=0$ for $i>0$, and $f^2=0$, so that $f$ lies in $\bar\sigma$ but not in the positive cone $\sC_\Lattice$.
The Weyl chamber $\sK_\Lattice$ admits a Weyl vector $h'$ satisfying $h'\cdot \alpha_i=1$, but this fact is not essential to us.

\begin{prop}\label{prop:F-moduli-functor}
    The moduli functor $\sF^q_{\Lattice,\sigma}$ parametrizes smooth  K3 surfaces $S$ with a $T_{2,3,7}$ configuration of irreducible smooth rational curves $\Theta_0,\dotsc,\Theta_9$ and with an elliptic fibration $\pi\colon S\to\bP^1$ which has a distinguished section $\Theta_0$ and a distinguished $\II^*$-fiber     with support  $\cup_{i=1}^9\Theta_i$ ($\tE_8$ in Dynkin notation).

    The moduli functor $\sF_{\Lattice,h}$ parametrizes K3 surfaces $\oS$ with $ADE$ singularities obtained from the above smooth K3 surfaces $S$ by contracting the curves $\Theta_i$ for $i\ne 6$ and the curves in the other fibers of the elliptic fibration which are disjoint from the section $\Theta_0$.
\end{prop}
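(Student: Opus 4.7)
Fix $(S,j)\in\sF^q_{\Lattice,\sigma}(\CC)$, so $j\colon\Lattice\hookrightarrow\Pic S$ is primitive with $j(h')$ big and nef for every $h'\in\sigma$. Unimodularity of $\Lattice$ forces $\sigma=\sK_\Lattice$ (cf.\ Section~\ref{sec:generalities_lattice_moduli}), whose walls are precisely the hyperplanes $\alpha_i^\perp$. The first step will be to realize each simple root $\alpha_i$ as the class of an irreducible smooth rational curve $\Theta_i\subset S$: Riemann--Roch on the K3 surface makes $j(\alpha_i)$ or $-j(\alpha_i)$ effective, and pairing with an interior vector $h'\in\sigma$ gives $\alpha_i\cdot h'>0$, ruling out the second option. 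Since $\alpha_i^\perp$ is a \emph{single} wall of the small cone $\sigma$, the small-cone theory of \cite{alexeev2025on-lattice} forces $j(\alpha_i)$ to be the class of an irreducible smooth rational curve $\Theta_i$, and the pairwise intersection numbers reproduce the $T_{2,3,7}$ graph by construction.

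Next, $f=\sum_{i=1}^9 n_i\alpha_i$ is primitive isotropic and lies in $\bar\sigma$ (a direct check using the $T_{2,3,7}$ graph gives $f\cdot\alpha_i=0$ for $i\geq 1$ and $f\cdot\alpha_0=1$), so $j(f)$ is a primitive nef class of square $0$. By Piatetski-Shapiro--Shafarevich, $|j(f)|$ then defines an elliptic fibration $\pi\colon S\to\PP^1$ with fiber class $j(f)$; $s\cdot f=1$ makes $\Theta_0=j(s)$ a section, and $\alpha_i\cdot f=0$ for $i\geq 1$ places $\Theta_i$ in fibers, the multiplicities $n_i$ assembling exactly a Kodaira fiber of type $\II^*$. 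Conversely, starting from such geometric data on $S$, the assignment $\alpha_i\mapsto[\Theta_i]$ yields a lattice homomorphism $j\colon\Lattice\to\Pic S$; injectivity is nondegeneracy of the Gram matrix, and primitivity is automatic by unimodularity of $\Lattice$. For $h'\in\sigma$, bigness is $(h')^2>0$ and nefness reduces to non-negativity on every irreducible curve of $S$, which follows from positivity of $h'$ on the simple roots $\alpha_i$ together with the explicit fibration structure furnished by the data.

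For the second statement, $\phi_{|mj(h)|}$ for $m\gg 0$ contracts exactly the irreducible curves $C$ with $h\cdot C=0$. The crucial observation is that $h=\sum_{i=0}^9 c_i\alpha_i$ has \emph{all} coefficients $c_i=6,12,18,24,30,36,42,28,14,21$ strictly positive; hence for any irreducible $(-2)$-curve $C$ distinct from every $\Theta_i$ one has $h\cdot C=\sum c_i(\Theta_i\cdot C)\geq 0$, with equality if and only if $C$ is disjoint from each $\Theta_i$. Disjointness from $\Theta_1,\ldots,\Theta_9$ then gives $f\cdot C=\sum n_i(\Theta_i\cdot C)=0$, so $C$ lies in a fiber of $\pi$, and since it avoids $\Theta_1,\ldots,\Theta_9$ this fiber is not the $\II^*$-fiber; disjointness from $\Theta_0$ is the remaining section-avoidance clause. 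Combined with the identity $h\cdot\alpha_i=\delta_{i,6}$, which contracts exactly $\{\Theta_i:i\ne 6\}$, this identifies the contracted locus as claimed, and conversely every curve of either type is annihilated by $h$.

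The main delicacy will be the converse direction of the first part --- verifying that an arbitrary K3 surface carrying the geometric data yields a primitive embedding $\Lattice\hookrightarrow\Pic S$ satisfying the quasipolarization condition on \emph{all} of $\sigma$, not merely positivity on the individual simple roots. The unimodular small-cone theory of \cite{alexeev2025on-lattice} packages exactly this equivalence, matching the Weyl chamber $\sigma=\sK_\Lattice$ with the $T_{2,3,7}$ configuration of irreducible $(-2)$-curves in a single functorial bijection.
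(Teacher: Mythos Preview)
Your overall strategy is sound and the treatment of the contraction by $|mh|$ matches the paper's closely. The substantive difference lies in how you establish irreducibility of the effective representatives $\Theta_i$ of the $\alpha_i$. You invoke ``the small-cone theory of \cite{alexeev2025on-lattice}'' as a black box, asserting that because $\alpha_i^\perp$ is a wall of the unique small cone $\sigma=\sK_\Lattice$, the class $j(\alpha_i)$ must be represented by an irreducible curve. This can be justified: unimodularity gives $\Pic S=\Lattice\oplus(\Lattice^\perp\cap\Pic S)$, and since $\Lattice$ has signature $(1,9)$ inside $\Pic S$ of signature $(1,\rho-1)$, the complement $\Lattice^\perp\cap\Pic S$ is negative definite; combining this with $\bar\sigma=\Nef(S)\cap\Lattice_\RR$ one checks that every irreducible component of $\Theta_i$ has class either $\alpha_i$ itself or lying in $\Lattice^\perp$, which forces irreducibility. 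But you neither spell this out nor cite a specific result, and the paper's own summary of \cite{alexeev2025on-lattice} contains no such statement.

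The paper instead proceeds geometrically. It first builds the elliptic fibration from $|f|$ and observes that $\cup_{i>0}\Theta_i$ supports a fiber whose dual graph surjects onto the $\tE_8$ diagram; since $\tE_8$ is maximal among Kodaira's fiber configurations, the curves $\Theta_1,\ldots,\Theta_9$ must already be irreducible. For $\Theta_0$ it writes $\Theta_0=\oTheta_0+E$ with $\oTheta_0$ an honest section and $E$ vertical, then uses nefness of $s+2f\in\bar\sigma$ together with negative-semidefiniteness of the fiber lattice to force $E^2=0$ and hence $E=0$. This route is more elementary and self-contained; your lattice shortcut is slicker in principle but, as written, rests on an imprecise citation. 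Your discussion of the converse (geometric data yields a quasipolarization) is a welcome addition the paper omits, though there too the closing appeal to \cite{alexeev2025on-lattice} would benefit from an explicit argument.
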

\begin{proof}
    For a surface $(S,j)\in \sF^q_{\Lattice,h'}$ we will identify $\Lattice$ with a subset of $\Pic S$ via~$j$. 
    By Riemann--Roch, the root $\alpha_i$ is represented by an effective or anti-effective divisor (supported on irreducible $(-2)$-curves).
        By definition, any line bundle $h'\in\sK_\Lattice$ is big and nef. Since $h'\cdot\alpha_i>0$, each $\alpha_i$ is thus represented by an effective curve, denote it by $\Theta_i$. 
    Here $\alpha_i$ is a positive root of $\Pic S$ with respect to $h'$, so it is the sum of simple roots. A simple root is represented by an irreducible $(-2)$-curve, but \emph{a priori} $\Theta_i$ may be reducible. 
    We shall now prove that each $\Theta_i$ is irreducible.

    Since $f\in\bar\sigma$, it is nef and effective. Since also $f^2=0$, the linear system $|f|$ defines an elliptic fibration $\pi\colon S\to\bP^1$ by Riemann--Roch. 
    Since $f\cdot\alpha_0=1$, $f$ is primitive, so it features as a fiber of $\pi$.
    The condition $f\cdot\alpha_i=0$ for $i>0$ implies that $\cup_{i>0}\Theta_i$ 
    supports
    a fiber of $\pi$. The dual graph of this fiber has a contraction to the graph $\tE_8$ defined by the matrix $\Theta_i\cdot\Theta_j$. Since the $\tE_8$-configuration is maximal among the Kodaira's configurations of fibers of elliptic fibrations, it follows that the curves $\Theta_1,\dotsc, \Theta_9$ are irreducible.

    Since $f\cdot \alpha_0=1$, one has $\Theta_0 = \oTheta_0+E$, $E=\sum a_{0j}E_{0j}$, where $\oTheta_0$ is a section of~$\pi$, and $E_{0j}$ are some $(-2)$-curves lying in fibers of~$\pi$.  We have
    \begin{displaymath}
        -2 = s^2 = (\oTheta_0+E)^2 = \oTheta_0^2 + 2\oTheta_0 \cdot E + E^2,
    \end{displaymath}
    which implies $2\oTheta_0 \cdot  E + E^2=0$ since $\oTheta_0^2=-2$ for a section of a K3 elliptic fibration. Now,
    \[
    (s+2f) \cdot  E = s \cdot E = (\oTheta_0+E) \cdot  E = \tfrac12 E^2.
    \]
   We shall now estimate $E^2$ in two ways in order to infer tyat $E=0$.
    Since 
    \[
    \Nef(S)\cap\Lattice_\bR = \bar\sigma =\oK_\Lambda= \{v \in \Lattice_\bR \mid v\cdot\alpha_i\ge0\}
    \]
    and 
    $(s+2f) \cdot \alpha_i\ge0$ for $0\le i\le 9$, the line bundle $s+2f$ is nef on $S$.
    Thus, $E^2\ge0$. Since the intersection form on any fiber of $\pi$ is negative semidefinite, 
    we have $E^2\leq 0$. Together,
    it follows that $E^2=0$ and $E$ is linearly equivalent to a multiple of fiber by Zariski's lemma. Now, $2\oTheta_0 \cdot E = -E^2=0$, which implies that $E=0$. Hence, $\Theta_0=\oTheta_0$ is an irreducible curve as well.

    This completes the proof for the moduli functor $\sF^q_{\Lattice,\sigma}$ for the smooth quasipolarized K3 surfaces $S$. By definition, the surfaces parametrized by the moduli functor $\sF_{\Lattice,h}$ are the K3 surfaces $\oS$ with $ADE$ singularities obtained from these by the linear system $|mh|$ for $m\gg0$. Clearly, this linear system contracts the curves $\Theta_i$ for $i\ne 6$ and all the other $(-2)$-curves which are disjoint from $\cup_{i=0}^9\Theta_i$. These are  the curves in the other fibers of $\pi$ which are disjoint from the section $\Theta_0$.
\end{proof}
    
\begin{cor}\label{cor:normalize M^K3}
     For any $c\in [0,1]$ there is an isomorphism $\phi\colon F_\Lattice\to M_{c,\red}^{\rm K3}$.   
\end{cor}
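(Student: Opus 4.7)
By Theorem~\ref{thm: same reduced} together with Remark~\ref{rmk: M_c vs M_c'} (which says the isomorphism $M_{c,\red}\simeq M_{c',\red}$ respects the K3 stratum), it suffices to prove the corollary for a single convenient value of $c$, and I would take $0<c\le \tfrac7{13}$. For this range Theorem~\ref{thm: liu2017minimal-volume}(iii) describes every K3 stable pair $(X,B)\in M_c^{\Kth}$ concretely: one starts from a smooth K3 surface $Z$ carrying the $T_{2,3,7}$-configuration $\Theta_0,\dotsc,\Theta_9$, contracts all $\Theta_i$ with $i\ne 6$ together with the extra $(-2)$-curves disjoint from $\lceil B_Z\rceil$, and takes $B=c\overline{\Theta}_6$. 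Proposition~\ref{prop:F-moduli-functor} describes the closed points of $F_\Lattice$ in exactly the same fashion. This yields a tautological bijection $\oS\longleftrightarrow(\oS,c\overline{\Theta}_6)$ between $F_\Lattice(\CC)$ and $M_{c,\red}^{\Kth}(\CC)$.

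To upgrade this set-theoretic bijection to an isomorphism of schemes, I would construct $\phi$ and its inverse in families over reduced bases. Given a family $\cS\to V$ of $\Lattice$-quasipolarized K3 surfaces, the primitive class $\alpha_6\in\Lattice$ gives, via the polarization $j\colon\Lattice\hookrightarrow\Pic(\cS/V)$, a relative Cartier divisor whose restriction to each fiber is the irreducible $(-2)$-curve of Proposition~\ref{prop:F-moduli-functor}; contracting the other curves of the $T_{2,3,7}$-configuration (and the extra fiber components disjoint from the section $\Theta_0$) and adding the boundary $c\overline{\Theta}_{6,V}$ produces a stable family in $M_c^{\Kth}$, hence a morphism $V\to M_{c,\red}^{\Kth}$. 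Conversely, given a stable family $(\cX,\cB)\to V$ in $M_c^{\Kth}$ with $V$ reduced, the simultaneous partial resolution of the $ADE$ singularities (constructed as in the proof of Theorem~\ref{thm: same reduced}) produces a smooth K3 family $\cZ\to V$ equipped with its $T_{2,3,7}$-configuration and the elliptic fibration of Lemma~\ref{lem: elliptic fibration}; this supplies the $\Lattice$-quasipolarization and thus an inverse morphism $M_{c,\red}^{\Kth}\to F_\Lattice$.

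The principal obstacle is the functoriality of the $\Lattice$-polarization in families. The marking $\Lattice\hookrightarrow\Pic(\cS/V)$ by the classes $\alpha_0,\dotsc,\alpha_9$ may only be defined étale-locally on $V$ and is ambiguous up to the action of $\Gamma=O(\Lattice^\perp)$; since $F_\Lattice=\DD_\Lattice/\Gamma$ quotients this ambiguity out, the induced morphism on coarse moduli is nevertheless well-defined, and the unimodularity of $\Lattice$ (which forces the unique small cone $\sigma=\cK_\Lambda$ discussed in Section~\ref{sec:generalities_lattice_moduli}) removes any further choices on the quasipolarization side. Once $\phi$ and $\phi\inv$ are both produced as morphisms which are set-theoretically inverse on closed points, the normality of $F_\Lattice$ and separatedness of $M_{c,\red}^{\Kth}$ allow one to conclude via Zariski's Main Theorem that they are mutually inverse isomorphisms.
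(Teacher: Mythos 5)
Your overall strategy is the paper's: reduce to $c\le\frac7{13}$ via Theorem~\ref{thm: same reduced} (with Remark~\ref{rmk: M_c vs M_c'} guaranteeing the K3 stratum is preserved, a point the paper leaves implicit), then match the objects of Proposition~\ref{prop:F-moduli-functor} with the stable pairs of Theorem~\ref{thm: liu2017minimal-volume}(iii) and promote the tautological correspondence to an identification of moduli functors. The paper's proof is exactly this and stops there; your added family-level detail in the forward direction (the relative divisor attached to $\alpha_6$, contraction by $|mh|$ relatively over $V$, boundary $c\oTheta_6$) is sound.

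The step that does not work as written is the inverse direction. You pass from a stable family $(\cX,\cB)\to V$ to a family of \emph{smooth} quasipolarized K3 surfaces by ``the simultaneous partial resolution of the ADE singularities (constructed as in the proof of Theorem~\ref{thm: same reduced})''. Two problems. First, the partial resolution in that proof only blows up the non-Gorenstein singularities and the singularities lying on $\cB$, i.e.\ the $A_1,A_2,A_6$ points on $D$, which indeed form sections over $V$; it does not touch the remaining ADE singularities of $X$, which sit in the other fibers of the elliptic fibration away from $D$. Second, those remaining singularities need not be equisingular over $V$ (for instance two $\I_1$ fibers can collide to an $\I_2$, creating an extra $A_1$ point of $\oS$ over a special point of $V$), so a simultaneous resolution over $V$ does not exist without a finite base change, and your $\cZ\to V$ is not a smooth K3 family. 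The fix is already in Section~\ref{sec:generalities_lattice_moduli} and is what the paper implicitly uses: work with the \emph{polarized} functor $\sF_{\Lattice,h}$ of K3 surfaces with ADE singularities, whose coarse space is also $F_\Lattice$. The stable surface $X$ (forgetting the boundary) \emph{is} the polarized surface $\oS$ of Proposition~\ref{prop:F-moduli-functor}, so no resolution is needed, and the \'etale-local marking ambiguity is absorbed by the $\Gamma$-quotient exactly as you say. With that substitution your argument closes; note also that once $\phi$ and its candidate inverse are morphisms between reduced separated schemes restricting to mutually inverse bijections on closed points, they are inverse isomorphisms directly, with no need for Zariski's Main Theorem.
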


\begin{proof}
    By Theorem~\ref{thm: same reduced} the space $M_{c,\rm red}$ does not depend on $c$, so we may assume $c\le \frac7{13}$. Now, by Proposition~\ref{prop:F-moduli-functor}, the families of K3 surfaces $\oS$ parameterized by $F_\Lattice$ are exactly the families of stable surfaces $(X,c\oTheta_6)$ of K3 type $\I$ in Theorem~\ref{thm: liu2017minimal-volume} for $c\le \frac7{13}$. This gives isomorphisms of moduli functors, thus an isomorphism $F_\Lattice\to M_{c , \red}^{\Kth}$ between their  moduli spaces, for $c\le \frac7{13}$.
\end{proof}

\subsection{Compactifications of $F_\Lattice$ and their maps to $M_c$}
\label{ss:comp}

In this section, we extend the bijective morphism $\phi\colon F_\Lattice\rightarrow M_c^\Kth$ of Corollary~\ref{cor:normalize M^K3} to a bijective morphism $\bar\phi\colon \oF_\Lattice^{\rBB}\rightarrow M_c$ between the compactified moduli spaces and prove Theorem~\ref{thm: main1-ii}.

On the moduli space $F^q_{\Lattice,\sigma}\simeq F_\Lattice$ of $(\Lattice,\sigma)$-quasipolarized smooth K3 surfaces we have the following canonical choice of a  big and nef divisor $R\in|L|$, $L=j(h)$:
\[
    R = 6\Theta_0 +  12\Theta_1 + 18\Theta_2 + 24\Theta_3 + 30\Theta_4 + 36\Theta_5 + 42\Theta_6 + 28\Theta_7 + 14\Theta_8 + 21\Theta_9
\]
If $\varphi\colon S\to\oS$ is the contraction defined by $|mL|$ for $m\gg0$ to a K3 surface with $ADE$ singularities then $R = \varphi^*(\oR)$ and $\oR=42\oTheta_6$ is a canonical choice of an ample divisor on $\oS$. As in Section~\ref{sec:generalities_lattice_moduli}, the moduli functor $\sF_{\Lattice,h}$ of $\Lattice$-polarized K3 surfaces with $ADE$ singularities is separated and is coarsely represented by the same space $F_{\Lattice,h}\simeq F_\Lattice$.

For $\epsilon \ll1$ the pair $(\oS,\epsilon \oR)$ has klt singularities. This gives an embedding of $F_\Lattice$ into the moduli space of KSBA-stable pairs. In this $K$-trivial case, we can be more specific: it gives an embedding $F_\Lattice \to M({h^2},\epsilon)$ into the moduli space of \emph{KSBA stable $K$-trivial pairs} of type $(h^2, \epsilon)$ for some $0<\epsilon\ll 1$ (\cite[Definitions~3.6 and 3.7]{alexeev2023stable-pair}), which is proper and does not depend on $\epsilon$ by \cite[Theorem~3.13]{alexeev2023stable-pair}. 

\begin{defn}\label{def:oFR}
   Let $\oF_\Lattice^R$ be the normalization of the closure of $F_\Lattice$ in $M({h^2},\epsilon)$.
\end{defn}

In Theorem~\ref{thm:stable-limits} we find all KSBA stable limits of one-parameter families $(\osS,\epsilon\osR)$. In particular, this describes the points of $\oF_\Lattice^R$. As a corollary, we see that there is a bijective map $\phi\colon \oF_\Lattice^R\to\oF_\Lattice^\rBB$ to the Baily-Borel compactification. Since $\oF_\Lattice^R$ is normal, $\phi$ is the normalization map. We begin by describing $\oF_\Lattice^\rBB$.

\begin{lem}\label{lem:BB}
    The boundary $\partial F_\Lattice^\rBB$ of the Baily-Borel compactification is homeomorphic to the projective $j$-line $\bP^1_j$. It has one $0$-cusp and one $1$-cusp.
\end{lem}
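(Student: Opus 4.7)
The plan is to use the standard description of the Baily--Borel boundary of $F_\Lattice=\DD_\Lattice/\Gamma$ in terms of $\Gamma$-orbits of rational isotropic subspaces of $\Lattice^\perp\otimes\bQ$, applied to the unimodular lattice $\Lattice^\perp\simeq U^{\oplus 2}\oplus E_8=\II_{2,10}$. Recall that $0$-cusps correspond to $\Gamma$-orbits of primitive rank-one isotropic sublattices $I\subset\Lattice^\perp$, while $1$-cusps correspond to $\Gamma$-orbits of primitive rank-two isotropic sublattices $E\subset\Lattice^\perp$.

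The first step is to count orbits. Since $\Lattice^\perp$ is even unimodular of signature $(2,10)$, its discriminant group is trivial, so Eichler's criterion (equivalently, Witt's cancellation for indefinite unimodular lattices) implies that $O(\Lattice^\perp)$ acts transitively on the set of primitive isotropic vectors, and also on the set of primitive isotropic planes. Hence there is exactly one $0$-cusp and exactly one $1$-cusp, and I will need to check that this transitivity persists after passing from $O(\Lattice^\perp)$ to the group $\Gamma$ of isometries extendable to $L_{\Kth}$; here this is immediate because $\Lattice^\perp$ is unimodular, so $O(\Lattice^\perp)=O(L_{\Kth})_\Lattice$.

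The second step is to identify the $1$-cusp as a modular curve. Pick a primitive isotropic plane $E\subset\Lattice^\perp$. The attached boundary component is $\bH/\Gamma_E$, where $\Gamma_E$ is the image in $\GL(E)\cong\GL_2(\bZ)$ of the stabilizer of $E$ in $\Gamma$, acting on the upper half-plane via the identification of $E_\RR$ with a real form of $E\otimes\bC$. For the unimodular lattice $\Lattice^\perp$, one can choose an isotropic plane sitting as the hyperbolic part of one of the $U$-summands; the orthogonal decomposition $\Lattice^\perp=E\oplus E^\vee\oplus (U\oplus E_8)$ shows that every element of $\GL(E)$ extends to an isometry of $\Lattice^\perp$, so $\Gamma_E$ surjects onto $\GL_2(\bZ)$, hence acts through $\PSL_2(\bZ)$. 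Thus the open $1$-cusp is $\bH/\SL_2(\bZ)$, whose Baily--Borel closure is the $j$-line $\bP^1_j$.

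The last step is to glue the $0$-cusp to the $1$-cusp. The incidence relation in the Baily--Borel boundary is: a $0$-cusp $[I]$ lies in the closure of a $1$-cusp $[E]$ precisely when $I$ is conjugate into $E$ under $\Gamma$. Since every primitive isotropic vector is contained in some primitive isotropic plane (for instance, extending it by a second isotropic generator of a $U$-summand) and both orbits are unique, the single $0$-cusp lies in the closure of the $1$-cusp. Therefore the boundary $\partial F_\Lattice^\rBB$ coincides set-theoretically with the Baily--Borel compactification of the $1$-cusp, which is $\bP^1_j$, and the topology agrees with the analytic topology on $\bP^1_j$. I expect the only subtle point to be the surjectivity $\Gamma_E\twoheadrightarrow\GL_2(\bZ)$, which however follows cleanly from the unimodularity and the existence of an orthogonal complement of $E\oplus E^\vee$ in $\Lattice^\perp$.
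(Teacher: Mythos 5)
Your proposal is correct and follows essentially the same route as the paper: both count $\Gamma$-orbits of primitive isotropic lines and planes in the unimodular lattice $\Lattice^\perp\simeq\II_{2,10}$ (the paper via splitting off $U$, resp.\ $U^2$, and uniqueness of even unimodular lattices of the resulting signatures; you via Eichler/Witt, which is the same fact) and then identify the $1$-cusp with $\bA^1_j$. The only cosmetic difference is that you carry out the stabilizer computation $\Gamma_E\twoheadrightarrow\GL_2(\bZ)$ explicitly where the paper cites Scattone and Kond\=o, and you make the incidence of the $0$-cusp in the closure of the $1$-cusp explicit where the paper leaves it implicit.
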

\begin{proof}
    The $0$-cusps of $F_\Lattice^\rBB$ are in bijection with the isotropic lines $I=\ZZ e\subset \Lattice^\perp$ modulo $\Gamma=O(\Lattice^\perp)$. Any such primitive isotropic vector $e\in T$, $e^2=0$, is contained in a copy of $U$. Since $U$ is unimodular, it splits off in a direct sum $\Lattice^\perp=U\oplus U^\perp$. Here, $U^\perp$ is an even unimodular lattice of signature $(1,9)$. There is only one such lattice, so $U^\perp\simeq U\oplus E_8$. Any two decompositions $\Lattice^\perp=U\oplus U^\perp$ obviously differ by an isometry of $U^\perp$. And any two isotropic vectors in $U$ differ by an isometry of~$U$. Hence, there is a unique $O(\Lattice^\perp)$-orbit of primitive isotropic vectors in $\Lattice^\perp$.

    Similarly, the $1$-cusps of $F_\Lattice^\rBB$ are in bijection with the primitive isotropic planes $\ZZ^2\simeq J\subset\Lattice^\perp$ modulo $O(\Lattice^\perp)$. We repeat the argument above to show that $J\subset U^2$ and $\Lattice^\perp = U^2\oplus (U^2)^\perp$. The lattice $(U^2)^\perp$ is an even unimodular lattice of signature $(0,8)$, and there is a unique such lattice $E_8$. In the same way as above, we conclude that there is a unique $O(\Lattice^\perp)$-orbit of primitive isotropic planes in $\Lattice^\perp$. Finally, by a standard computation (see \cite[Proposition 5.5.3]{scattone1987on-the-compactification-of-moduli} or \cite{kondo1993on-the-kodaira-dimension})
    the direct sum splitting $\Lattice^\perp = U^2\oplus (U^2)^\perp$ implies that the $1$-cusp is isomorphic to the $j$-line $\bA^1_j$. Therefore, $\partial F_\Lattice^\rBB\simeq\bA^1_j\sqcup \{\rm pt\}$ is homeomorphic to $\bP^1_j$.
\end{proof}

\begin{thm}\label{thm:stable-limits}
    Any one-parameter degeneration $\bar f^0\colon (\osS^0, \epsilon\osR^0)\to \Delta\setminus 0$ over a smooth curve $(\Delta,0)$, possibly after a finite base change $(\Delta',0)\to (\Delta,0)$, can be completed to a family of KSBA stable pairs in which the central fiber $(\oS_0,\epsilon \oR_0)$ is one of the stable surfaces $(X,c\oTheta_6)$ of Theorem~\ref{thm: liu2017minimal-volume} and Lemma~\ref{lem: X22 X211} for $c\le\frac7{13}$, of the following Types:
    \begin{enumerate}
        \item[(I)] $\oS_0$ is a K3 surface with $ADE$ singularities.
        \item[(II)] $\oS_0$ is a rational surface with a simple elliptic singularity.
        \item[(III)] $\oS_0$ is a rational surface with a cusp singularity.
    \end{enumerate}
    For each of these surfaces the minimal resolution of singularities $S_0$ has a $T_{2,3,7}$ configuration of curves $\Theta_0,\dotsc,\Theta_9$, and $\oS_0$ is obtained from it by contracting the curves $\Theta_i$ for $i\ne 6$ and the curves in the fibers of the elliptic fibration $\pi\colon S_0\to\bP^1$, other than the $\II^*$-fiber $\cup_{i>0}\Theta_i$, which are disjoint from the section $\Theta_0$. 

    In Type ($\III$), the pair $(\oS_0,\epsilon \oR_0)$ is unique up to an isomorphism. In Type $\II$, for each $j$-invariant of the elliptic singularity the pair $(\oS_0,,\epsilon \oR_0)$ is unique.
\end{thm}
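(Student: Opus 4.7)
The plan is to combine Kulikov's semistable reduction with a relative log MMP applied to the canonical polarizing divisor $\sR$ extending $R=\sum m_i\Theta_i$ from Section~\ref{ss:comp}. The extra structure provided by Proposition~\ref{prop:F-moduli-functor}, namely the $T_{2,3,7}$ configuration of Definition~\ref{defn:T237} together with the distinguished elliptic fibration $\pi\colon S_t\to\bP^1$ having $\Theta_0$ as section, will serve as the bridge between the abstract degeneration theory of K3 surfaces and the explicit classification in Theorem~\ref{thm: liu2017minimal-volume}.

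First, after a finite base change $\Delta'\to\Delta$, I would apply Kulikov's theorem to obtain a semistable model $\sS\to\Delta'$ with $K_{\sS/\Delta'}\sim 0$ whose central fiber $\sS_0$ is of Kulikov Type I, II, or III. Since the classes $s=\alpha_0$ and $f=\alpha_1+2\alpha_2+\cdots+3\alpha_9$ in $\Lattice\subset\Pic(S_t)$ are monodromy-invariant, the section $\Theta_0$ and the $\II^*$-fiber extend to divisors on $\sS$, so the elliptic fibration extends to $\tilde\pi\colon\sS\to\bP^1\times\Delta'$, and $R$ extends to a relative divisor $\sR$ on $\sS$. Invoking the nef Kulikov models for elliptic K3 surfaces constructed in \cite{alexeev2022compactifications-moduli}, I would then modify $\sS$ so that $\sR$ becomes relatively nef and the components of $\sS_0$ on which $\sR$ restricts trivially are exactly those we ultimately wish to contract.

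Next, I would run the relative MMP for $(\sS,\epsilon\sR)$ over $\Delta'$ for $0<\epsilon\ll 1$; since $K_\sS\sim 0$, the contractions are driven by $\sR$ alone and they contract precisely the $\sR$-trivial components of $\sS_0$, leaving a single stable central fiber $\oS_0$. In Type~I no contraction is needed, and the linear system $|mh|$ produces the K3 surface with $ADE$ singularities of Proposition~\ref{prop:F-moduli-functor}. In Type~II, the chain of interior elliptic-ruled components of $\sS_0$ collapses to a single smooth elliptic curve, producing a simple elliptic singularity of degree one whose $j$-invariant matches that of the collapsed double curve. In Type~III, the cycle of rational double curves collapses to a cusp singularity of degree one. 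In both rational cases, the $T_{2,3,7}$ configuration survives on the minimal resolution, so by Lemma~\ref{lem: X22 X211} the relative minimal elliptic surface is forced to be $X_{22}$ or $X_{211}$; comparing with Theorem~\ref{thm: liu2017minimal-volume} then identifies $\oS_0$ with the asserted stable surfaces of Types~$\II$ or $\III$.

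For uniqueness: the degree-one cusp singularity is unique up to isomorphism and its minimal resolution $X_{211}$ is rigid, so the Type~$\III$ pair is determined. In Type~$\II$, the pair is determined once the $j$-invariant of the simple elliptic singularity is fixed, since this $j$-invariant selects the unique smooth fiber of $X_{22}$ or $X_{211}$ to be contracted. The main obstacle will be the intermediate modification step: controlling the nef Kulikov model so that the $T_{2,3,7}$ configuration extends in the expected way, and so that the MMP contractions produce exactly the configurations of Theorem~\ref{thm: liu2017minimal-volume} rather than spurious KSBA limits. Monodromy may scatter components of the $\II^*$-fiber across $\sS_0$, and one may need to perform further flops and blowups as in \cite{alexeev2022compactifications-moduli}; the recognizability of $\sR$ in the sense of \cite{alexeev2023compact} is precisely what ensures that these auxiliary choices do not affect the isomorphism class of the stable limit.
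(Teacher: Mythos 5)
Your overall strategy coincides with the paper's: pass to a Kulikov/divisor model via \cite{alexeev2022compactifications-moduli}, make the polarizing divisor relatively nef, and contract by its linear system. However, there are two genuine gaps. First, the step you defer as ``the main obstacle'' is in fact the entire content of the proof: one must exhibit the explicit sequence of M1 modifications that pushes the limit of the section $\Theta_0$ into the end component $V_1$ (turning it from a $(-1)$-curve into a $(-2)$-curve, and turning the double curve $D_{12}$ into its strict transform plus a new $(-1)$-curve meeting $\Theta_0$), so that the $T_{2,3,7}$ configuration sits on a single component and $R_0$ becomes nef and disjoint from the strata. Your appeal to recognizability cannot fill this hole: recognizability (proved in the paper separately, \emph{after} this theorem) only says that the limit of the divisor depends on the central Kulikov fiber alone; it neither produces the divisor model nor identifies the stable limit. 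Without carrying out the modifications one cannot verify that contraction by $|m\sR|$ yields exactly the surfaces of Theorem~\ref{thm: liu2017minimal-volume} --- in particular that the simple elliptic or cusp singularity has degree one, which is read off from $D_{12}$ becoming a $(-1)$-configuration on $V_1$ after the flops.

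Second, your uniqueness argument in Type II is incorrect as stated: for $S_0=\Bl X_{211}$ the $j$-function $\bP^1\to\bP^1$ has degree $2$, so a given $j$-invariant does \emph{not} select a unique smooth fiber --- generically two fibers share it. The paper closes this by observing that the two fibers with the same $j$-invariant differ by an automorphism of $X_{211}$ commuting with $\pi$ (and likewise the two $\I_1$ fibers in Type III differ by an automorphism), which is what actually gives uniqueness of the pair $(\oS_0,\epsilon\oR_0)$. Relatedly, ``its minimal resolution $X_{211}$ is rigid'' conflates the minimal resolution of the cusp singularity with the relatively minimal elliptic surface; the statement you need is Lemma~\ref{lem: X22 X211} combined with the automorphism argument above.
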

\begin{proof}
    After a finite base change the family $f^0$ admits a simultaneous resolution of singularities. In order not to complicate the notation, let us denote the base again by $\Delta\setminus 0$, and denote the family of quasipolarized smooth K3 surfaces by $f^0\colon (\sS^0,\epsilon \sR^0)\to \Delta\setminus 0$. 

    Our first goal is to construct (again, after a finite base change) a nice Kulikov model $f\colon (\sS,\epsilon\sR)\to \Delta$ such that the divisor $\sR$ is relatively nef and does not contain any strata of the central fiber $S_0$. Such models are called \emph{divisor models}, see \cite[Definition~2.3]{alexeev2022compactifications-moduli} or \cite[Definition~3.11]{alexeev2023compact}. Once we have such a model, the KSBA stable model is obtained from it by a contraction by the linear system $|m\sR|$ for $m\gg0$. 

    The paper \cite{alexeev2022compactifications-moduli} of Alexeev-Brunyate-Engel almost accomplishes this goal: it constructs divisor models for all one-parameter degenerations of elliptic K3 surfaces for the divisor $R^\rc = s+m \sum f_i$, where $s$ is the distinguished section,  $f_i$ are the singular fibers of the elliptic fibration $\pi\colon S\to\bP^1$ counted with multiplicities (the sum of the multiplicities is $24$), and $m\ge1$ is any positive integer. Up to taking a multiple, our divisor $R$ is a subdivisor of $R^\rc$ since $R$ contains the distinguished section and the components $\Theta_i$ of the distinguished $\II^*$-fiber (which is counted with multiplicity $10$ in $\sum f_i$). So we start with the divisor model for $R^\rc$ provided by \cite{alexeev2022compactifications-moduli} and modify it to a divisor model for $R$.

\smallskip
    \emph{Type $\I$.} This is the case when the central fiber $S_0$ of a Kulikov model is a smooth K3 surface. In this case the entire family is in $F_{\Lattice,h}$, so it is covered by Corollary~\ref{cor:normalize M^K3}.

\smallskip
    \emph{Type $\III$.} In the case of a Type $\III$ degeneration, by \cite[Section 7B]{alexeev2022compactifications-moduli} there is a divisor model $(\sS,\epsilon \sR^\rc)$ with the following properties (the types $X_k,Y_k,I_k$ are defined in \cite[Section 7B]{alexeev2022compactifications-moduli}):
    \begin{enumerate}
        \item The central fiber $S_0$ with the irreducible components $V_j$, $1\le j\le n$ admits a fibration $S_0=\cup W_i\to C=\cup C_i$ to a chain of $\bP^1$'s with a distinguished section $s$, which is a Cartier divisor on $S_0$ extending to a divisor of sections on $\sS$. (Recall that $\sS$ is a Kulikov model, so it is a smooth $3$-fold.)
        \item A surface $W_i$ is not necessarily irreducible; it may be a cycle of ruled surfaces. Over each $C_i\simeq\bP^1$, the generic fiber of the fibration $W_i\to C_i$ is either an elliptic curve or a cycle of rational curves. 
        \item The terminal surfaces $W_1$ and $W_n$ are of types $X_k$ or $Y_k$; the intermediate surfaces $W_2,\dotsc,W_{n-1}$ are of types $I_k$.
        \item Some of the surfaces $W_j$ support ``very singular fibers" $f_i$, which are the limits of the singular fibers on $\sS_t$, $t\ne0$ (some of which may come together). For each of them, there is a divisor on $\sS$, let us denote it by $F_i$, such that $f_i = F_i\cap S_0$.
    \end{enumerate}

    Specializing this to the family of elliptic surfaces with an $\II^*$-fiber, this implies that the surface at one end is irreducible of the maximal type $X_{11}$ ($E_8$ in Dynkin notation). Let us denote it by $V_1=W_1$ and say that it is ``the left one". This is a rational elliptic surface with a section, a $\II^*$-fiber and an $I_1$-fiber which is the double curve $D_{12} = V_1\cap W_2$. 
    Thus,  $V_1\simeq X_{211}$ in the notation of \cite{miranda1986on-extremal-rational}, 
    see the discussion below.
    Each of the components $\Theta_i$, $i>0$ of the $\II^*$-fiber and the section $s=\Theta_0$ is a Cartier divisor on $S_0$ and extends to a divisor on the total space $\sS$. We picture the surface $S_0$ in Figure~\ref{fig:divisor-model1}. On the left, we show $S_0$ together with the divisor $R_0^\rc$; on the right we modify this divisor to $R_0$ by forgetting all fibers $f_i$ except the $\II^*$-fiber $f_1$ in $V_1$ and adjusting the coefficients of $s$ and $f_1$ so that they match our divisor $R=6\Theta_0 + 12\Theta_1 + \dotsc$. 
    
    The double curves $D_{ij}=V_i\cap V_{j}$ are shown in blue, and the divisors $R^\rc$ and $R$ in red. 
    We show the self intersections of the sections $s_i=s\cap V_i$, they are $-1,0,\dotsc,0,-1$. Recall that the dual complex of a Type $\III$ Kulikov degeneration is a triangulation of a sphere, so one should consider this picture drawn on $S^2$.

    \begin{figure}
        \centering
        \includegraphics[width=27pc]{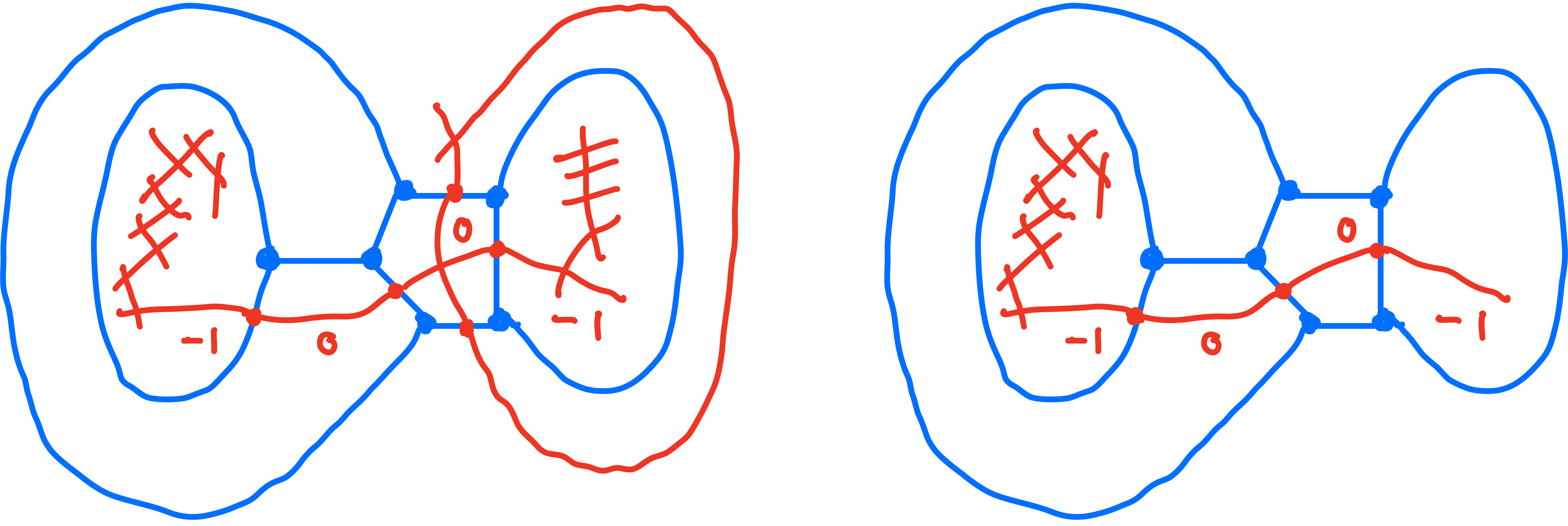}
        \caption{Divisor model for $(S_0,\epsilon R_0^\rc)$ and the modified pair $(S_0, \epsilon R_0)$}
        \label{fig:divisor-model1}
    \end{figure}
    \begin{figure}
        \centering
        \includegraphics[width=27pc]{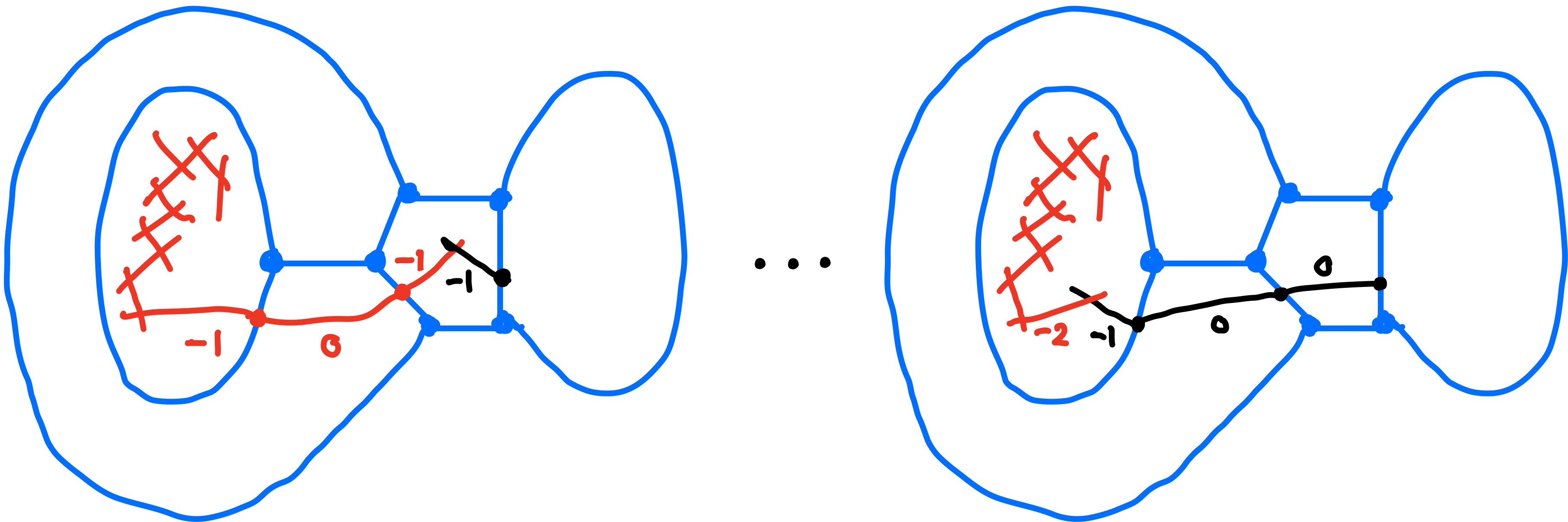}
        \caption{M1 modifications achieving a divisor model for $(S_0,\epsilon R_0)$}
        \label{fig:divisor-model2}
    \end{figure}  

    We observe that the divisor $R_0$ is Cartier and that its restrictions to the surfaces $W_1,\dotsc, W_{n-1}$ are nef, but the restriction to $W_n$ is not nef, since it is a $(-1)$-curve. We now modify the threefold $\sS$ by a sequence of flops which are known as M1-modifications. Recall that an M1 modification starts with two irreducible components $V_i,V_j$ intersecting along a double curve $D_{ij}=V_i\cap V_j$, then contracts a $(-1)$-curve $E_i$ on $V_i$ intersecting $D_{ij}$ at a point $p\in D_{ij}$ to this point, and finally blows up the same point $p$ on the surface $V_j$. In the new $3$-fold $\sS'$ the central fiber is the same except for the surfaces $V'_i, V'_j$, and one has $V_i = \Bl_p V'_i$, $V_j' = \Bl_p V_j$. 

    We perform this M1 modification with the surfaces $V_n\subset W_n$, $V_{n-1}\subset W_{n-1}$ on the right and the section $E:=s_n=s\cap W_n$, which is a $(-1)$-curve. On the new $3$-fold, the flat limit $s$ of the sections no longer intersects $V_n$; on $V_{n-1}$ it becomes a $(-1)$-curve $s_{n-1}$ instead of a $0$-curve; and there is a new $(-1)$-curve in the double curve $D_{n-1,n}$. We picture this flop in Figure~\ref{fig:divisor-model2}. We repeat this process moving all the way to the left until $s=s_1=\Theta_0$ is contained only in the left surface $V_1$, as in the last panel of Figure~\ref{fig:divisor-model2}. 
    Note that the curve $\Theta_0$ now becomes a $(-2)$-curve instead of a $(-1)$-curve. In particular, the curves $\Theta_0,\dotsc,\Theta_9$ on $V_1$ form the $T_{2,3,7}$-configuration.
    Also note that the $I_1$-fiber $D_{12}=V_1 \cap V_2$ becomes reducible: it equals the strict transform of $D_{12}$ plus an additional $(-1)$-curve intersecting $\Theta_0$.

    This is a divisor model for $(\sS,\epsilon \sR)$. Indeed, the divisor $R_0$ is now nef and does not contain any strata of $S_0$. To find the KSBA limit, it remains to apply the linear system $|m\sR|$ for $m\gg0$ relatively over $\Delta$. As we have already done several times in this paper, this is an application of the Cohomology and Base Change Theorem: We easily check that the higher cohomologies of $m\sR_t$ on each fiber vanish and that the linear systems $|m\sR_t|$ are base point free. This implies that there is a contraction $\sS\to\osS$ over $\Delta$. Over $\Delta\setminus 0$ we obtain the base change of the original family of K3 pairs by the finite base changes that we have done in the previous steps. As for the central fiber $\oS_0$, we get a contraction $\varphi\colon S_0\to \oS_0$. The divisor $\oR_0$ is entirely contained in $V_1$. On the surface $V_1=W_1$ the divisor $\oR_0$ is big and nef.    
    Therefore, a linear system $|m\oR_0|$ for $m\gg0$ contracts:
    \begin{itemize}
        \item the surfaces $W_2,\dotsc, W_n$ to a single point;
        \item the curves $\Theta_i$, $i\ne 6$, producing $A_1,A_2$ and $A_6$ singularities;
        \item the strict transform of $D_{12}$, producing a cusp singularity of degree $1$.
    \end{itemize}
    This is precisely the rational surface $\oS$ of Theorem~\ref{thm: liu2017minimal-volume} and Lemma~\ref{lem: X22 X211} for $c\le\frac7{13}$ with a cusp singularity.

    The minimal resolution $S$ of $\oS$ is a rational elliptic surface with a $\II^*$ fiber and a $\I_1$ fiber. By \cite{miranda1986on-extremal-rational}, there is a unique such surface denoted there by $X_{211}$, with $\II^*\I_1\I_1$ fibers. $D_{12}$ must be one of the two $\I_1$ fibers, which differ by an an automorphism of $\oS_0$. 
    So there is a unique isomorphism class of the pairs $(\oS_0,\epsilon\oTheta_6)$. 

    \smallskip
    \emph{Type $\II$.} This case is very similar to the Type $\III$ case, with the following modifications: By \cite[Section 7F]{alexeev2022compactifications-moduli}, specialized to a family of elliptic surfaces with a $\II^*$-fiber, there exists a divisor model for $(\sS,\epsilon \sR^\rc)$ with the central fiber $S_0=V_1\cup\dotsb\cup V_n$ where the end surfaces $V_1$ and $V_n$ have type $X_{12}$ ($\tE_8$ in Dynkin notation),
    $E=D_{1,2}=\dotsb = D_{n-1,n}$ is a smooth elliptic curve which has some $j$-invariant $j(E)\in\bA^1_j$,
    and $V_i=\bP^1\times E$ for $2\le i\le n-1$. Each of $V_1$ and $V_n$ is a smooth elliptic rational surfaces; by \cite{miranda1986on-extremal-rational}, it is isomorphic to 
     \begin{itemize}
         \item 
         $\pi\colon X_{211}\to\bP^1$ with  $\II^*\I_1\I_1$ fibers and nonconstant $j$-function $\bP^1\to\bP^1$
         or
         \item 
         $\pi\colon X_{22}\to\bP^1$ with  $\II^*\II$ fibers and $j\equiv 0$. 
     \end{itemize} 

    In the first case the $j(E)\ne0$ and the limit of the singular fibers of the elliptic fibrations $\sS_t\to\bP^1$ is the $\II^*$-fiber (counted with multiplicity $10$) and the two $I_1$-fibers (each counted with multiplicity $1$). In the second case $j(E)=0$ and the limit of the singular fibers is the $\II^*$-fiber (counted with multiplicity $10$) and the $\II$-fiber (counted with multiplicity $2$).

    The rest of the construction is exactly the same as in type $\III$. After $n-1$ M1 modifications, the limit of the section $s$ is the $(-2)$-curve $\Theta_0\subset V_1$, and fiber $D_{12}$ transforms into the strict transform of $D_{12}$ plus an additional $(-1)$-curve intersecting $\Theta_0$. In particular, the curves $\Theta_0,\dotsc,\Theta_9$ on $V_1$ again form the $T_{2,3,7}$-configuration.

    The KSBA-stable model of this divisor model is obtained by a birational morphism $\varphi\colon S_0\to\oS_0$.
    The divisor $\oR_0$ is entirely contained in $V_1$. On the surface $V_1$ the divisor $\oR_0$ is big and nef. So $|mR_0|$ for $m\gg0$ contracts: 
     \begin{itemize}
        \item the surfaces $V_2,\dotsc,V_n$ to a point;
        \item the curves $\Theta_i$, $i\ne 6$, producing $A_1,A_2$ and $A_6$ singularities;
        \item the strict transform of $D_{12}$, producing a simple elliptic singularity of degree~$1$ with the $j$-invariant $j(D_{12})$.
    \end{itemize}   

     The surface $S_0$ is a rational elliptic surface with an $\II^*$ fiber, 
     say over $\infty\in\PP^1$.
     Recall the two possibilities for it discussed above. 
     
     If $S_0 = \Bl_1 V_1\cong \Bl X_{211}$, the $j$-function has degree $2$,
     but note  the two special branch points: 
     the j-invariant $j=0$ is only assumed once, namely by the singular fibre $\II^*$ at $\infty$,
     while the j-invariant $j=12^3$ is only attained by a
     single smooth fibre (the unique elliptic curve with an automorphism of order $4$).
Indeed,    
    all other values are assumed twice, and the two fibers with the same $j$-invariant differ by an automorphism of $X_{211}$ commuting with $\pi$ (which induces an order four automorphism on the two ramified fibres).    
    
   If $S_0=\Bl_1 V_1 \cong \Bl X_{22}$, the $j$-invariant of every fiber is $0$ and two such fibers differ by an automorphism of  $X_{22}$ commuting with $\pi$. So in both cases the isomorphism class of the pair $\pi\colon (S_0,D_{12})$ is uniquely determined by $j(D_{12})$. Thus, the isomorphism class of the pair $(\oS_0,\epsilon \oR_0)$ is uniquely determined by $j(D_{12})$. 

     This completes the proof.
\end{proof}

\begin{cor}\label{cor:oFR=oFBB}
    $\oF_\Lattice^R=\oF_\Lattice^\rBB$, the Baily-Borel compactification of $\oF_\Lattice$.
\end{cor}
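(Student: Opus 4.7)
The plan is to construct a bijective morphism $\phi\colon \oF_\Lattice^R\to \oF_\Lattice^\rBB$ extending the identity on $F_\Lattice$, and then to upgrade bijectivity to an isomorphism via normality.

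For the existence of $\phi$, I would apply Borel's extension theorem: the identity map $F_\Lattice\to F_\Lattice\subset\oF_\Lattice^\rBB$ extends uniquely to a morphism from any normal projective compactification of $F_\Lattice$, and $\oF_\Lattice^R$ is normal by Definition~\ref{def:oFR}. Alternatively, one can define $\phi$ on closed points by sending a KSBA stable limit to the associated limit mixed Hodge structure in $\oF_\Lattice^\rBB$, and invoke properness together with separatedness to conclude that this point-wise assignment is a morphism.

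For bijectivity, I would combine Theorem~\ref{thm:stable-limits} with Lemma~\ref{lem:BB} to compare the two boundaries. By Theorem~\ref{thm:stable-limits}, the boundary of $\oF_\Lattice^R$ is the union of a single point coming from the unique Type~$\III$ stable limit, together with an $\bA^1_j$ coming from the Type~$\II$ stable limits, which are classified by the $j$-invariant $j(D_{12})$ of the limit elliptic double curve. By Lemma~\ref{lem:BB}, the boundary of $\oF_\Lattice^\rBB$ is the union of a single $0$-cusp with a $1$-cusp canonically isomorphic to $\bA^1_j$. Under the standard dictionary between the Kulikov classification and the Baily-Borel stratification, Type~$\III$ must map to the $0$-cusp and Type~$\II$ must map to the $1$-cusp. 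The matching of the $j$-parameters on the two sides of the $1$-cusp follows because the splitting $\Lattice^\perp = U^2\oplus E_8$ used in the proof of Lemma~\ref{lem:BB} encodes precisely the periods of the elliptic curve $D_{12}$ appearing in the Kulikov model of Theorem~\ref{thm:stable-limits}~(II).

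To finish, $\phi$ is a proper bijective morphism from a normal variety, hence it factors through the normalization of $\oF_\Lattice^\rBB$ and indeed equals that normalization map. But Baily-Borel compactifications are always normal; in our specific case this will also be confirmed by Brieskorn's identification of $\oF_\Lattice^\rBB$ with a weighted projective space. Therefore $\phi$ is an isomorphism.

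The main obstacle is the matching of $j$-parameters along the $1$-cusp: one must verify that the Hodge-theoretic $j$-coordinate on the Baily-Borel $1$-cusp coincides with the geometric $j$-invariant of the elliptic curve $D_{12}$ in the Type~$\II$ Kulikov model. This is classical for K3 surfaces but requires unwinding the identification between the limit mixed Hodge structure of a Type~$\II$ Kulikov degeneration and the double curve of its central fiber; in particular, one should check that the $X_{22}$ branch (where $j(D_{12})=0$) and the $X_{211}$ branch in Theorem~\ref{thm:stable-limits}~(II) fit together coherently to cover $\bA^1_j$ bijectively and compatibly with the cusp coordinate.
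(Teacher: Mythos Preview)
Your proposal is correct and follows essentially the same approach as the paper: obtain the morphism $\phi$ (the paper cites \cite[Theorem~3.17]{alexeev2023stable-pair} rather than Borel extension, but these amount to the same thing), establish bijectivity on the boundary by matching Theorem~\ref{thm:stable-limits} against Lemma~\ref{lem:BB} via the $j$-invariant of $D_{12}$, and conclude by Zariski's Main Theorem using normality of $\oF_\Lattice^\rBB$. The paper dispatches your ``main obstacle'' in one line by observing that the elliptic curve whose $j$-invariant parametrizes the Baily--Borel $1$-cusp is, by the standard description of Type~$\II$ degenerations, precisely the double curve $E=D_{12}$ of the Kulikov model constructed in the proof of Theorem~\ref{thm:stable-limits}.
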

\begin{proof}
    By Definition~\ref{def:oFR}, $\oF_\Lattice^R$ is the normalization of the closure of $F_\Lattice$ in the KSBA moduli space $M(h^2,\epsilon)$. By \cite[Theorem~3.17]{alexeev2023stable-pair} there is a regular morphism $\phi\colon \oF_\Lattice^R\to \oF_\Lattice^\rBB$. We claim that $\phi$ is a bijection. Indeed, by Theorem~\ref{thm:stable-limits}, a point in the boundary of $\oF^R_\Lattice$ corresponding to a stable limit of a family $(\osS,\epsilon\osR)$ of K3 pairs is uniquely determined by a point $j\in\PP^1_j$, the $j$-invariant of the curve $D_{12}$ which is contracted to the elliptic, resp. cusp singularity (we set $j=\infty$ if $D_{12}$ is a rational nodal curve). 

    By Lemma~\ref{lem:BB}, a point in the boundary of $\oF_\Lattice^\rBB$ is also uniquely determined by a point $j\in\PP^1_j$. And it is the same $j$-invariant: for the Baily-Borel decomposition, a point in the $1$-cusp corresponds to the $j$-invariant $j(E)$, where $E$ is the elliptic curve appearing in any Type $\II$ Kulikov model of a one-parameter degeneration; and one has $j=\infty$ for the $0$-cusp appearing in any Type $\III$ degeneration. By the proof of Thereom~\ref{thm:stable-limits}, $E=D_{12}$ is the same elliptic, resp. rational nodal curve.

    Since $\oF_\Lattice^\rBB$ is normal, by the Main Zariski Theorem $\phi$ is an isomorphism.  
\end{proof}

We can now prove a weaker version of Theorem~\ref{thm: main1-ii}.

\begin{proof}[Proof of Theorem~\ref{thm: main1-ii} up to normalization]
    We assume that $c\le\frac7{13}$. A family of stable pairs $(\oS,\epsilon \oR)$ for for any $0<\epsilon \ll1$ in $\oF^R_\Lattice$ is also a family of stable pairs of general type $(X, c\oTheta_6)$ of Theorem~\ref{thm: liu2017minimal-volume}, with $\epsilon =c$.    
    So we have a morphism of moduli functors and a morphism of the  moduli spaces, $\mu\colon \oF_\Lattice^R\to M_c$, and it is a bijection. By Corollary~\ref{cor:oFR=oFBB}, $\oF_\Lattice^R=\oF_\Lattice^\rBB$ is normal. So by the Main Zariski Theorem $\mu$ is the normalization map.
\end{proof}

\subsection{The recognizability of $R$}

Recall that a canonical choice of a polarizing divisor $R$ is recognizable if for one-parameter Kulikov degenerations $f\colon \sS \to (\Delta,0)$ in the quasipolarized moduli the flat limits $R_0$ of the polarizing divisors $\sR_t$ on $\sS_t$ depend only on the central fiber $S_0$.

\begin{prop}
    For the moduli space $F_\Lambda$, the divisor $R$ is recognizable.
\end{prop}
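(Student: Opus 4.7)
The plan is to show that the flat limit $R_0$ on the central fiber $S_0$ of any one-parameter Kulikov degeneration in $F_\Lambda$ depends only on $S_0$. Recall that $R=\sum_{i=0}^9 c_i\Theta_i$ with $(c_0,\dotsc,c_9)=(6,12,18,24,30,36,42,28,14,21)$, where on each smooth fiber $\sS_t$ the $\Theta_i$ are the unique irreducible $(-2)$-curves representing the basis vectors $\alpha_i$ of $\Lambda$, by Proposition~\ref{prop:F-moduli-functor}.

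First I would dispose of Type~$\I$: here $S_0$ itself is a smooth $\Lambda$-quasipolarized K3 surface, and Proposition~\ref{prop:F-moduli-functor} applied to $S_0$ directly supplies the canonical $T_{2,3,7}$-configuration on $S_0$, so $R_0=\sum c_i\Theta_i$ is read off from $S_0$ together with its $\Lambda$-polarization.

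For Types~$\II$ and~$\III$, I would use the divisor-model analysis in Theorem~\ref{thm:stable-limits}. The essential observation is that the $T_{2,3,7}$ configuration on $\sS_t$ propagates, via a canonical sequence of M1-modifications, onto a distinguished end component $V\subset S_0$ of type $X_{12}$ or $X_{11}$---namely the end component carrying the $\tE_8$-fiber together with the flat limit of the section $\Theta_0$. This component and the resulting $T_{2,3,7}$-configuration on it are intrinsic to $S_0$ (they are characterized combinatorially via the dual complex and the induced elliptic fibration), so the combination $R_0=\sum c_i\Theta_i$ depends only on $S_0$.

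The main subtlety to address is that on the \emph{original} Kulikov model, before any M1-modification, the flat limit of $R$ may be spread over several components of $S_0$, and different Kulikov models could in principle give different flat limits on the same $S_0$. The resolution is to observe that the divisor class of $R_0$ arises from the canonical specialization $\Pic(\sS_t)\to\Pic(S_0)$ applied to the intrinsic class $h\in\Lambda\subset\Pic(\sS_t)$; within this specialized class, the unique effective representative supported on the intrinsic $T_{2,3,7}$-configuration with the prescribed coefficients $c_i$ is forced. Consequently any two Kulikov models sharing the same central fiber $S_0$ yield the same flat limit $R_0$, which establishes recognizability.
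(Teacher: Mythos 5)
Your approach is genuinely different from the paper's, and it contains a real gap. The paper does not reprove recognizability from scratch: it invokes \cite[Theorem~2]{alexeev2023compact}, which says that the rational curve divisor $R^\rc$ (section plus singular fibers, with coefficients) is recognizable, and then observes that restricting a recognizable divisor from the $U$-polarized locus to the $\Lattice$-polarized locus preserves recognizability, and that a subdivisor of a recognizable divisor is recognizable; since $R$ is a subdivisor of a multiple of $R^\rc$, the claim follows. All of the hard work is thus outsourced to the cited theorem.

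The gap in your argument is exactly at the point you flag as ``the main subtlety.'' Recognizability requires that the flat limit $R_0$ be the same for \emph{any two} Kulikov degenerations whose central fibers are isomorphic, and the central fiber $S_0$ does not determine the family (nor even the direction of approach in the period domain). Your proposed resolution via ``the canonical specialization $\Pic(\sS_t)\to\Pic(S_0)$'' does not work: there is no canonical such map, because two extensions of a line bundle from $\sS\setminus S_0$ to $\sS$ differ by twists by $\sO(\sum a_iV_i)$ with $V_i$ the components of $S_0$, and these are nontrivial in $\Pic(S_0)$ when $S_0$ is reducible. Moreover, the claim that the limit is ``forced'' to be the effective representative supported on the $T_{2,3,7}$-configuration is false for a general Kulikov model: before the M1-modifications the limit of the section (hence of $R$) genuinely meets several components of $S_0$ --- that is precisely why Theorem~\ref{thm:stable-limits} has to perform those flops to reach a divisor model. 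Theorem~\ref{thm:stable-limits} constructs \emph{one} divisor model per family; it does not show that two different families with the same central fiber yield the same limit divisor, which is the entire content of recognizability and is the nontrivial input supplied by \cite[Theorem~2]{alexeev2023compact}. As written, your argument assumes what is to be proved.
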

\begin{proof}
 
By \cite[Theorem~2]{alexeev2023compact} the \emph{rational curve divisor} $R^\rc$ on the moduli space $F_{2d}$ of polarized K3 surfaces is recognizable, and 
by \cite{alexeev2022compactifications-moduli} (cf. also \cite[Example 9.21]{alexeev2023compact})
on the locus of elliptic K3 surfaces with a section (which are $U$-polarized) it reduces to the divisor $R^\rc$ which is the sum of the section and the singular fibers with some coefficients.
Via the inclusion $U\to\Lattice=U\oplus E_8$ to the first summand, it gives the divisor $R^\rc$ which we used in the proof of Theorem~\ref{thm:stable-limits}. 
Clearly, the restriction of a recognizable divisor for the $U$-polarized K3 surfaces to the locus of $\Lattice$-polarized surfaces is again recognizable. Equally obvious is the fact that any subdivisor of a recognizable divisor is itself recognizable. Our divisor $R$ is a subdivisor of a multiple of $R^\rc$, so it is recognizable.
  
\end{proof}

By \cite[Theorem~1]{alexeev2023compact} it follows that the compactification $\oF_\Lattice^R$ is semitoroidal, i.e.\ it dominates the Baily-Borel compactification and is dominated by some toroidal compactification. But the equality $\oF_\Lattice^R=\oF_\Lattice^\rBB$ already includes this.

\section{Proof of Theorem~\ref{thm: main1-ii} using Brieskorn's family}
\label{sec:brieskorns-family}

In this section, we provide a different, more direct proof of Theorem~\ref{thm: main1-ii} using an explicit family of surfaces found by Brieskorn in~\cite{brieskorn1981unfolding-of-exceptional}. 

Looijenga \cite{looijenga1983smoothing-components,looijenga1984smoothing-components}  studied deformations of exceptional  triangle singularities $D_{p,q,r}$ (D for Dolgachev), relating irreducible components of the versal deformation space to embeddings of certain lattices $Q_{p,q,r}$ into the K3 lattice $L_{K3}=U^3\oplus E_8^2$. A particular case is the triangle singularity $D_{2,3,7}$ 
with the equation 
\begin{equation}\label{eq:T237sing}
   x^7 + y^3 + z^2 = 0
\end{equation}
In this case, the lattice $Q_{2,3,7}$ is $U\oplus E_8$, generated by the $T_{2,3,7}$ configuration of $(-2)$-curves of Definition~\ref{defn:T237}. Since the embedding $U\oplus E_8\to L_{K3}$ is unique up to isometry, the versal deformation is irreducible and \cite{looijenga1983smoothing-components,looijenga1984smoothing-components} provide a description of its germ.

For the singularity $D_{2,3,7}$  the deformation space was previously described by Brieskorn in \cite{brieskorn1981unfolding-of-exceptional}, who constructed a family of projective surfaces $\sX_t$ over the weighted projective space 
\[
\bP:=\bP(4, 10, 12, 16, 18, 22, 24, 28, 30, 36, 42),
\]
and \cite[Theorem 5]{brieskorn1981unfolding-of-exceptional} says that $\bP$ equals $\oF_\Lattice^\rBB$, the Baily-Borel compactification of the moduli space of $\Lattice$-lattice polarized K3 surfaces for $\Lattice=U\oplus E_8$. Brieskorn's family, written down in \cite[pp. 72-73]{brieskorn1981unfolding-of-exceptional}, was revisited by Hashimoto-Ueda in \cite[Section 3]{hashimoto2022ring-of-modular} using the theory of elliptic surfaces. Since Brieskorn's paper is not easily accessible, we will 
use
the notations of \cite{hashimoto2022ring-of-modular} here for reader's convenience. 
The family $\sX_\bP\to\bP$ is a hypersurface $\sX_\bP\subset\bP(1,6,14,21)\times \bP$
defined by the equation $f(w,x,y,z,t_i)=0$, where
\begin{equation}\label{eq:brieskorn}
    f = z^2 + y^3 + A_{28}(w,x)y + B_{42}(w,x) \quad\text{with}
\end{equation}
\begin{align*}
A &= A_{28} = t_4 x^4 w^4  + t_{10} x^3 w^{10} + t_{16} x^2 w^{16} + t_{22} xw^{22} + t_{28} w^{28}\\
B &=B_{42}= x^7 + t_{12} x^5 w^{12} + t_{18} x^4 w^{18} + t_{24} x^3 w^{24} + t_{30} x^2 w^{30} + t_{36} xw^{36} + t_{42} w^{42}
\end{align*}
The variables $w,x,y,z$ have weights $1,6,14,21$ and the variable $t_i$ has weight $i$. This family is considered as a family of ordinary (not stacky) hypersurfaces in $\bP(1,6,14,21)$ over the weighted projective space $\bP$ \emph{which is considered as a stack}. The stabilizer group of a point $t=(t_i)$ is the automorphism group of the fiber $(\sX_t,\sD_t)$, where $\sD_t=\sX_t\cap\{w=0\}$ is the divisor at infinity. The fact all the weights $4,10\dotsc$ are even reflects the fact that the generic automorphism group of $(\sX_t,\sD_t)$ is $\mu_2$, generated by the involution $z\to -z$.

The equation $f(w,x,y,z,t_i)=0$ defines a hypersurface in $\bP(1,6,14,21)\times \bA^{11}$. The group $\bC^*$ acts on it by sending $(w,x,y,z)\to (\alpha\inv w,x,y,z,)$ and $t_i\to \alpha^i t_i$. The equation is invariant under this action; thus it descends to a family over $(\bA^{11}\setminus 0)/\bC^*$, which is the weighted projective space $\bP$ for the above-listed weights. Note that the point $(t_i)=(0)\in\bA^{11}$ corresponding to  Equation~\eqref{eq:T237sing} is not in $\bP$. The triangle singularity $D_{2,3,7}$ is not log canonical. The discussion below shows that all the other surfaces $\cX_t$ in the family over $\bP$ have log canonical singularities.

Each surface $\sX_t$ in this family has \emph{generic singularities} $A_1,A_2,A_6$ along the infinite divisor $\sD_t = \sX_t\cap \{w=0\}$, coming from the ambient space $\bP(1,6,14,21)$, and no other singularities along $\sD_t$. Since these singularities come from the ambient space, they can be simultaneously resolved to a family $\widetilde\sX\to\bP$. On each surface $\widetilde\sX_t$ the nine exceptional divisors together with the strict transform of $\sD_t$ form the $T_{2,3,7}$ configuration of $(-2)$-curves. This makes $\widetilde\sX_t$ (but not $\sX_t$!) into an elliptic surface $\widetilde\sX_t\to\bP^1$ with a section coming from the curve $\Theta_0$ in the notation of Definition~\ref{defn:T237}, and with the $\II^*$-fiber (which is $\tE_8=E_9$ in Dynkin notation) at infinity, i.e.\ over the point $w=0$ in $\bP(1,6)\simeq\bP^1$ with coordinates $w,x$ (considered here without its stacky structure).

The remaining, non-generic singularities of $\sX_t$ (equivalently of $\widetilde\sX_t$) are easy to analyze using \cite[III.3.2]{miranda1989the-basic-theory}. The only non-$ADE$ singularities occur when there is a linear form $\ell=x+cw^6$ such that $\ell^4|A$, $\ell^6|B$ and $\ell^{12}$ divides the discriminant $\Delta=4A^3+27B^2$. An easy computation, cf. \cite{hashimoto2022ring-of-modular}, shows that this occurs over the substack $\bP(4,6)\subset\bP$ with the coordinates $(a,b)$ over which $\ell= x-\tfrac1{7}bw^6$, and 
\begin{equation}\label{eq:ABDelta}
   A = a\ell^4 w^4, \quad B = \ell^6(\ell+bw^6), \quad
   \Delta = \ell^{12}\big(4a^3 w^{12} + 27(\ell + bw^{6})^2 \big) 
\end{equation}
Of course $\bP(4,6)$ is the moduli stack $\oM_{1,1}$ of genus~$1$ curves with a point, whose coarse moduli space is the modular curve $X(1)=\bP^1_j$, the $j$-line, with $j(a,b)=12^3 \frac{4a^3}{4a^3+27b^2}$. 
Let $\widehat\sX_{a,b}$ be the Weierstrass fibration in the normal form (\cite[III.3.4]{miranda1989the-basic-theory}) corresponding to $\widetilde\sX_{a,b}$. Its numerical invariants are  
\begin{equation}\label{eq:hatABDelta}
   \hA = a w^4, \quad \hB = \ell+bw^6, \quad
   \widehat\Delta = 4a^3 w^{12} + 27(\ell + bw^{6})^2 
\end{equation}
If $a=0$ then the surface $\widehat\sX_{a,b}$ has a $\II$-fiber and a constant $j$-map $j(w,x)\equiv 0$. If $a\ne0$ then $\widehat\sX_{a,b}$ has two $I_1$-fibers. In either case it also has the $\II^*$-fiber at infinity, i.e.\ at $w=0$. It follows that for $a=0$ (resp. $a\ne0$), $\widehat\sX_t$ is an  extremal rational elliptic surface $X_{22}$ (resp. $X_{211}$) of Miranda--Person \cite{miranda1986on-extremal-rational}.

The surface $\widetilde\sX_{a,b}$ is obtained from its Weierstrass-normal form $\widehat\sX_t$ by blowing up the point of intersection of the section and the fiber $F_0= \{\ell=0\}$ and then blowing down the strict transform of $F_0$. By \eqref{eq:hatABDelta}, if $\Delta(a,b)=4a^3+27b^2\ne0$ then 
$F_0$ is  an elliptic curve with the $j$-invariant $j(a,b)$; otherwise it is an $I_1$-fiber. Therefore, the non-$ADE$ singularity of $\widetilde\sX_{a,b}$ (and also of $\sX_{a,b}$) is either simple elliptic or the cusp singularity $T_{237}$, of degree~$1$ in either case. It is Gorenstein and log canonical.

By the adjunction formula, the dualizing sheaf of $\sX_t$ is trivial. When $\sX_t$ has only $ADE$ singularities, this easily implies that it is a K3 surface. Its minimal resolution contains the $T_{2,3,7}$-configuration of the $(-2)$-curves generating the lattice $\Lattice=U\oplus E_8$. Since the equation of a degree $42$ hypersurface in $\bP(1,6,14,21)$ not passing through the points $(1,0,0,0)$, $(0,1,0,0)$, $(0,0,1,0)$ can be uniquely normalized to the equation $f(w,x,y,z,t_i)$ for some $(t_i)$, all the fibers in the family $\sX_\bP\to\bP$ are pairwise non-isomorphic. In light of the results of Section~\ref{sec: F_Lambda}, we conclude that the variety $\bP\setminus \bP(4,6)$ can be identified with the $10$-dimensional coarse moduli space $F_\Lambda$ of $\Lambda$-polarized K3 surfaces . 

From this, it is clear that  $\bP$ (considered as an ordinary variety, not as a stack) and the Baily-Borel compactification
$\oF_\Lattice^\rBB = F_\Lambda\sqcup \bP^1_j$ can be identified set theoretically: 
$\bP\setminus\bP(4,6) = F_\Lattice$ and $\bP(4,6)=\bP^1_j$.
Brieskorn's \cite[Theorem 5]{brieskorn1981unfolding-of-exceptional} says that, in fact, there is a biholomorphism $\bP\simeq\oF_\Lattice^\rBB$. 

\begin{thm}\label{thm:PtoMc}
    For any $0<c\le \frac7{13}$ there is an isomorphism of Deligne-Mumford stacks $$\bP(4, 10, 12, 16, 18, 22, 24, 28, 30, 36, 42)\to M_c$$ 
    and of their coarse moduli spaces.
\end{thm}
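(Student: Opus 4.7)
The plan is to exhibit mutually inverse morphisms between $\bP$ and $M_c$, using Brieskorn's family in one direction and the log canonical ring normal form of Theorem~\ref{thm: canonical ring 2} in the other.

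For the forward map $\Phi\colon \bP \to M_c$, I would check that Brieskorn's family $\pi\colon \sX_\bP \to \bP$ together with the boundary $c\sD_\bP$ (where $\sD_\bP = \sX_\bP \cap \{w=0\}$) is a stable family in $M_c$. Each $\sX_t$ is a degree $42$ hypersurface in $\bP(1,6,14,21)$, hence flat over $\bP$, and adjunction on the weighted projective space yields $K_{\sX_t}\sim 0$, so $K_{\sX_\bP/\bP} + c\sD_\bP$ is relatively $\bQ$-Cartier. The singularity analysis carried out above shows every fiber has generic $A_1, A_2, A_6$ singularities along $\sD_t$ and at most one additional simple elliptic or cusp singularity of degree $1$ (the latter precisely over $\bP(4,6)\subset\bP$). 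Hence each pair $(\sX_t, c\sD_t)$ is log canonical with $(K_{\sX_t}+c\sD_t)^2 = c^2\sD_t^2 = c^2/42 = v(c)$, and so belongs to $M_c$ by Theorem~\ref{thm: liu2017minimal-volume}. The universal property of $M_c$ yields $\Phi$.

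For the inverse $\Psi\colon M_c \to \bP$, I would use Theorem~\ref{thm: canonical ring 2}: any $(X,cD)\in M_c$ embeds canonically as $\{f=0\}\subset\bP(1,6,14,21)$ with $f=z^2+g_{42}(w,x,y)$. Writing
\[
g_{42} = \alpha y^3 + y^2 h_{14}(w,x) + y\, A'(w,x) + B'(w,x),
\]
the coefficients of $y^3$ and $x^7$ are nonzero. Rescaling $y$ makes $\alpha=1$, then completing the cube kills $h_{14}$; rescaling $x$ normalizes the coefficient of $x^7$ in $B'$ to $1$, and a shift $x\mapsto x+\gamma w^6$ eliminates the $x^6w^6$ term. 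The remaining coefficients are exactly the Brieskorn parameters $(t_4,t_{10},t_{12},t_{16},t_{18},t_{22},t_{24},t_{28},t_{30},t_{36},t_{42})\in\bA^{11}\setminus\{0\}$, well-defined modulo the residual $\CC^*$-action rescaling $w$ (with the stated weights on the $t_i$). Carrying this out in families, using that the graded pieces of the relative log canonical sheaf are locally free by cohomology and base change from the Hilbert series in Theorem~\ref{thm: canonical ring 2}, gives $\Psi$. By construction $\Psi\circ\Phi=\operatorname{id}_\bP$, and $\Phi\circ\Psi=\operatorname{id}_{M_c}$ by the uniqueness of the canonical presentation.

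For the Deligne--Mumford stack statement, I would match the automorphism groups: after the normal form is imposed, the only remaining self-maps of the ambient $\bP(1,6,14,21)$ preserving a fiber are the weighted $\CC^*$-action on $(t_i)$ together with the involution $z\mapsto -z$. The latter gives a generic $\mu_2$-stabilizer matching the generic stabilizer of $\bP(4,10,\ldots,42)$, consistent with all weights being even, and the same bookkeeping identifies the special stabilizers at orbits with extra symmetry. The main obstacle is promoting the pointwise normal-form procedure to a genuine stack morphism; this reduces to showing that the rescalings and the shift $x\mapsto x+\gamma w^6$ can be performed Zariski-locally on the base and yield the Brieskorn family as a pullback, which follows from the locally free structure of the log canonical sheaves. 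If only the coarse moduli statement is desired, this technical point can be bypassed: bijectivity on closed points together with normality of $\bP$ (a weighted projective space, hence normal) implies the isomorphism by Zariski's main theorem.
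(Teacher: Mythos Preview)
Your approach is essentially the same as the paper's: construct $\Phi$ from Brieskorn's family and $\Psi$ by putting the log canonical ring in normal form using Theorem~\ref{thm: canonical ring 2}. The forward direction is fine. In the inverse direction, however, there is a genuine gap in the family version. To turn $\alpha y^3$ into $y^3$ you must rescale $y$ by a cube root of $\alpha$, and similarly a seventh root to normalize the $x^7$ coefficient (and a square root for $z^2$ if you start from an arbitrary generator $z$). These roots do not exist Zariski-locally on the base, so your claim that ``the rescalings \dots\ can be performed Zariski-locally'' is false as stated. The paper handles exactly this point: it passes to the finite \'etale cover $U'=\Spec\cR[\alpha',\beta',\gamma']/(\alpha'^7-\alpha,\beta'^3-\beta,\gamma'^2-\gamma)\to U$, carries out the normalization there, and then checks that the resulting map $U'\to\bP$ is $(\mu_7\times\mu_3\times\mu_2)$-equivariant, hence descends to $U\to\bP$ and glues. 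This is the one nontrivial technical step, and your proposal skips it.

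Your fallback via Zariski's Main Theorem also does not give the theorem as stated. You have a bijective morphism $\Phi\colon\bP\to M_c$ with \emph{source} normal; ZMT in the form ``bijective with normal target implies isomorphism'' would require $M_c$ to be normal, which is precisely what is at stake. With only $\bP$ known to be normal, the most you can conclude is that $\bP$ is the normalization of $M_{c,\red}$, i.e.\ the weaker statement already obtained in Section~\ref{sec:proof-by-degenerations}. To prove that $M_c$ itself, with its possibly nonreduced structure, is isomorphic to $\bP$, you really do need the inverse map at the level of moduli functors, and for that the \'etale-local normalization argument is unavoidable.
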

\begin{proof}
    By the above discussion, every fiber $(X,cD)$ in Brieskorn's family $(\cX_\bP, \cD_\bP)\to\bP$  is a stable surface pair in $M_c$ for $c\le\frac7{13}$. This defines a classifying morphism $\bP\to M_c$ of functors, of the Deligne-Mumford stacks, and of their coarse moduli spaces. 
    Vice versa, we claim that any family $\pi\colon (\cX,c\cD)\to S$ in $M_c$ over a Noetherian base scheme $S$ for $c\le\frac7{13}$ is obtained by pullback from Brieskorn's family by a morphism $S\to\bP$, thus giving the inverse map $M_c\to\bP$.
    
    Indeed, for any fiber $(\cX_s,c\cD_s)$ over a point $s\in S$, by Theorem~\ref{thm: canonical ring 2} the algebra $R(\cX_s,K_{\cX_s}+\cD_s)$ is isomorphic to $\bC[w,x,y,z]/(f)$ for some generators $w,x,y,z$ of $H^0(m(K_{\cX_s}+\cD_s))$ with $m=1,6,14,21$ and a degree~$42$ polynomial $f = x^7+ y^3+ z^2 + \dotsc\in \bC[w,x,y,z]$. 

    For any pair $(X,cD)$ in $M_c$ one has $H^i(m(K_X+D))=0$ for all $m\ge0$ and $i>0$ by \cite[Theorem~1.7]{fujino2014fundamental-theorems}. Then by Cohomology and Base Change theorem,   
    \[
    R(\cX/S, K_{\cX/S} + \cD) = \oplus_{m\ge0} \pi_* \cO_\cX( m(K_{\cX/S} + \cD) )
    \]
    is a locally free  $\cO_S$-algebra with graded pieces of finite rank. Extending locally the generators of $R(\cX_s,K_{\cX_s}+\cD_s)$ to $R(\cX/S, K_{\cX/S} + \cD)$ by Nakayama's lemma, we get an open affine neighborhood $U=\Spec \cR\ni s$ and a short exact sequence
    \[
    0 \to K \to \cR[w,x,y,z]_{42}\simeq \cR^{24}\to Q=H^0(\cX_U, 42(K_{\cX_U/U} + D_U)) \to 0,
    \]
    where $Q$ and $K$ are projective $\cR$-modules, with $\rank Q=23$ and $\rank K =1$. Shrinking $U$ further, we can assume that $K=\cR f$, where $f = \alpha x^7+\beta y^3+\gamma z^2 + \dotsc\in \cR[w,x,y,z]$ is a degree~$42$ polynomial  with $\alpha,\beta,\gamma$ nowhere vanishing on $U$. Then for the restricted family $\cX_U\to U$ one has
    \[
    R(\cX_U/U, K_{\cX_U} + D_U) = \cR[w,x,y,z]/(f),
    \]
    so $\cX_U = \Proj R(\cX_U/U, K_{\cX_U} + D_U)$ is the hypersurface $Z(f)\subset \bP(1,6,14,21)\times U$.
    Let
    \[
    \cR' = \frac{\cR[\alpha',\beta',\gamma']}{(\alpha'{}^7-\alpha, \beta'{}^3-\beta, \gamma'{}^2-\gamma)}, 
    \qquad U' = \Spec \cR'.
    \]
    Then $U'\to U$ is 
    a Galois $(\mu_7\times\mu_3\times\mu_2)$-cover and 
    the pullback family $\cX_{U'}\subset \bP(1,6,14,21)\times U'$ is defined by a polynomial $f' = x^7+y^3+z^2 + \dotsc \in \cR'[w,x,y,z]$. Completing the square in $z$, cube in $y$ and the seventh power in $x$, we put $f'$ in a standard form that does not contain $z$, $y^2$ and $x^6$. 
     Let us write
    \begin{equation}\label{eq:general-family}
    f' = z^2 + y^3 + A_{28}(w,x)y + B_{42}(w,x), \quad\text{with}
    \end{equation}
    \begin{align*}
    A &= A_{28} = r'_4 x^4 w^4  + r'_{10} x^3 w^{10} + r'_{16} x^2 w^{16} + r'_{22} xw^{22} + r'_{28} w^{28}\\
    B &=B_{42}= x^7 + r'_{12} x^5 w^{12} + r'_{18} x^4 w^{18} + r'_{24} x^3 w^{24} + r'_{30} x^2 w^{30} + r'_{36} xw^{36} + r'_{42} w^{42}
    \end{align*}
    for some $r'_i\in \cR'$. For every point $s'\in U'$ at least one of the coefficients $r'_i(s')\in\bC$ evaluated at $s'$ is nonzero. Indeed, the surface defined by  Equation~\eqref{eq:T237sing} is not log canonical but $\cX_{s'}$ is. 
    Comparing Equations \eqref{eq:general-family} and \eqref{eq:brieskorn}, we see that 
    the correspondence $(t_i)\to (r'_i)$ defines a morphism $\varphi'\colon U'\to \bP$ for which $(\cX_{U'},\cD_{U'})$ is the pullback from the Brieskorn's family $(\cX_\bP,\cD_\bP)\to\bP$.     

    The standard equation $f'$ is unique up to rescaling $x\to\xi_7x$, $y\to\xi_3y$, $z\to\xi_2z$ by some $p$-th roots of unity $\xi_p$ and $w\to\lambda w$ with invertible $\lambda\in \cR'$. Any such rescaling defines the same map $\varphi'\colon U'\to \bP$. Since $\varphi'$ commutes with the $(\mu_7\times\mu_3\times\mu_2)$-action, it descends to a morphism $\varphi\colon U\to\bP$ for which $(\cX_U,\cD_U)$ is isomorphic to the pullback of  $(\cX_\bP,\cD_\bP)\to\bP$. And since $\varphi$ was obtained in a canonical way, for different choices of neighborhoods $U\ni s$ of points $s\in S$ the morphisms $U\to \bP$ canonically glue to a morphism $S\to\bP$ for which
    $\pi\colon (\cX,\cD)\to S$ is the pullback from the Brieskorn's family. 
\end{proof}

As a corollary, we obtain
\begin{proof}[Proof of Theorem~\ref{thm: main1-ii}]
    We combine Brieskorn's identity $\bP=\oF_\Lambda^\rBB$ with Theorem~\ref{thm:PtoMc}.
\end{proof}

\begin{rmk}\label{rmk: dim strata}
As one can see from Theorem~\ref{thm:stable-limits} or from the description of the Brieskorn family given in this section, the stratum $M_c^\Kth=M_c^\I$ of the stratification \eqref{eq: stratification} is an open dense subscheme of $M_c$, while $\dim M_c^\II=1$ and $M_c^\III$ consists of a single point.     
\end{rmk}

\section{Hyperbolicity of the bases of equisingular families in \texorpdfstring{$M_c$}{}}\label{sec: hyperbolicity}
According to \cite[Definition 1.2]{park2022viehweg-hyperbolicity},
a morphism $f\colon \sX\rightarrow V$ of varieties, with $V$ smooth, is called \emph{Whitney equisingular} if there exists a Whitney stratification $\sX =\cup_{\alpha\in \Lattice}U_\alpha$ such that $f|_{U_\alpha}\colon U_\alpha\rightarrow V $ is smooth for all $\alpha\in \Lattice$ (see \cite[Definition~1.7]{dimca1992singularities-and-topology} for the definition of a Whitney stratification). In such a family, isolated singularities of the fibers necessarily form an \'etale cover of the base.
Sung Gi Park \cite[Conjecture 1.5]{park2022viehweg-hyperbolicity} proposed that, for Whitney equisingular families $f\colon \sX\rightarrow V$ of KSBA-stable varieties with maximal variation, $V$ should be of log general type.  This is a generalization of a known result for smooth families (\cite{popa2017viehweg-hyperbolicity}),
and we may verify this conjecture for families in $M_c$ using the smooth case.
\begin{thm}\label{thm: hyperbolicity}
Let $0<c\leq 1$.  Let $V$ be smooth quasi-projective variety, and $f\colon \sX\rightarrow V$ a flat Whitney equisingular family of stable surfaces in $M_c$. If $f$ is of maximal variation, then $V$ is of log general type.
\end{thm}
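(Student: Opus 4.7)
The plan is to reduce the problem to smooth families by simultaneously resolving the singularities in families and then invoke Viehweg hyperbolicity for smooth families of K3 surfaces or elliptic curves, as in \cite{popa2017viehweg-hyperbolicity}.

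By Theorem~\ref{thm: liu2017minimal-volume} every fiber of $f$ is of one of three types: (I) a K3 surface with $ADE$ singularities, (II) a rational surface with a simple elliptic singularity of degree one, or (III) a rational surface with a cusp singularity of degree one. Whitney equisingularity ensures that the singular locus of the fiber, together with its topological type, is locally constant on $V$, so the type (I), (II), or (III) is locally constant, and we may assume $V$ is connected and treat the three cases separately. Case (III) is immediate from Remark~\ref{rmk: dim strata}: since $M_c^{\III}$ consists of a single point, the generic finiteness of the moduli map forces $\dim V=0$, and a point is trivially of log general type.

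In Cases (I) and (II) the isolated singular points of the fibers form an étale cover of $V$ which, after a finite étale base change (harmless since the log-general-type property is preserved under étale covers in both directions), becomes a disjoint union of smooth sections $\Sigma_j\subset \cX$ over $V$ of constant analytic type. Applying Brieskorn's simultaneous resolution to the $ADE$ singularities, and the analogous simultaneous resolution to the simple elliptic singularity of Case~(II), produces a smooth proper morphism $\widetilde f\colon \widetilde{\cX}\to V$ whose fibers are the minimal resolutions $\tX_t$ of Notation~\ref{nota: resolve X}. In Case~(I), $\widetilde f$ is a smooth family of $\Lattice$-polarized K3 surfaces with $\Lattice=U\oplus E_8$, and under the isomorphism $M_c^{\I}\simeq F_\Lattice$ of Corollary~\ref{cor:normalize M^K3} the moduli map of $f$ agrees with the period map of $\widetilde f$. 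Hence maximal variation of $f$ transfers verbatim to maximal variation of $\widetilde f$, and the K3 case of Viehweg hyperbolicity from \cite{popa2017viehweg-hyperbolicity} yields that $V$ is of log general type.

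In Case~(II), the exceptional divisor of the simple elliptic singularity is a smooth elliptic curve of self-intersection $-1$ in each fiber of $\widetilde f$ (Lemma~\ref{lem: X22 X211}), and these exceptional curves assemble into a smooth family of elliptic curves $\sE\to V$. By Theorem~\ref{thm:stable-limits}, the isomorphism class of a Type~(II) pair $(X,B)$ is determined by the $j$-invariant of its exceptional elliptic curve, so maximal variation of $f$ is equivalent to non-isotriviality of $\sE\to V$. Since $\dim M_c^{\II}=1$, the base $V$ is at most a smooth quasi-projective curve, and the classical fact (Parshin--Arakelov, or the elliptic case of \cite{popa2017viehweg-hyperbolicity}) that a non-isotrivial smooth family of elliptic curves over a smooth quasi-projective curve forces the base to be of log general type completes the argument. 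The main technical obstacle is the construction of the $V$-relative simultaneous resolution in Case~(II): one must verify that Whitney equisingularity supplies enough control on the deformation of the minimal resolution to assemble the fiberwise exceptional elliptic curves into a smooth family $\sE\to V$; the corresponding step for the $ADE$ singularities in Case~(I) is standard via Brieskorn's theory.
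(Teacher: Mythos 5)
Your proposal is correct and follows essentially the same route as the paper's proof: use Whitney equisingularity to confine the moduli map to a single stratum of \eqref{eq: stratification}, dispose of Type $\III$ trivially, simultaneously resolve to get a smooth family of K3 surfaces in Type $\I$ and extract the family of exceptional elliptic curves (equivalently, of simple elliptic singularities) in Type $\II$, and conclude in both cases with \cite{popa2017viehweg-hyperbolicity}. The only cosmetic differences are that the paper resolves by blowing up the \'etale singular locus and contracting $(-1)$-curves rather than invoking Brieskorn's simultaneous resolution after an \'etale base change, and it phrases the Type $\II$ maximal-variation transfer via the fact that a surface in $M_c^{\II}$ is determined by its simple elliptic singularity.
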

\begin{proof}

We may assume that $\dim V\geq 1$, since the statement is trivially true if $\dim V=0$. Since the family is Whitney equisingular and of maximal variation, the image of moduli map $\phi\colon V\rightarrow M_c$ is contained in one of the positive dimensional strata of the stratification~\ref{eq: stratification}, namely,
$M_c^{\mathrm{K3}}$ or $M_{c}^\II$; see Remark~\ref{rmk: dim strata}.

If the image of $\phi$ lies in $M_c^{\mathrm{K3}}$, since the family is Whitney equisingular and the singularities on the fibers are all klt singularities, we may resolve the singularities simultaneously by blowing up the locus of the singularities, which is \'etale over $V$,
and then blow down the $(-1)$-curves in the fibers, to obtain a family of smooth K3 surfaces over $V$ (cf.~\cite[Theorem~2.10]{kollar1988threefolds-and-deformations}), and $V$ is of log general type by \cite[Theorem~A]{popa2017viehweg-hyperbolicity}.

Now assume that the image of $\phi$ is contained in $M_c^{\II}$. Then $V$ is a curve. Note that the normalization of $M_c^{\II}$ is isomorphic to $\A^1$, and it is not enough to conclude directly by this fact that $V$ is of log general type. Instead, we observe that each surface $X$ in $M_c^{\II}$ is uniquely determined by its  simple elliptic singularity (or the minimal resolution thereof). The simple elliptic singularities on each fiber $X_t$ form a section $\tV$ of $f$, and we may blow up it to obtain a family of smooth elliptic curves over $V$, with maximal variation. It follows again from \cite[Theorem~A]{popa2017viehweg-hyperbolicity} that $V$ is of log general type.
\end{proof}


\newcommand{\etalchar}[1]{$^{#1}$}
\def\cprime{$'$}
\providecommand{\bysame}{\leavevmode\hbox to3em{\hrulefill}\thinspace}
\renewcommand\MR[1]{}
\providecommand{\MRhref}[2]{%
  \href{http://www.ams.org/mathscinet-getitem?mr=#1}{#2}
}

\end{document}